\newtcolorbox{casebox}[1][]{
    enhanced,
    breakable, 
    colback=white, 
    colframe=black!75, 
    arc=3pt, 
    boxrule=1pt, 
    title={\textbf{Case} #1}, 
    fonttitle=\bfseries,
    attach title to upper, 
    before upper={\setlength{\parindent}{1em}}, 
    boxsep=5pt, 
    left=8pt, 
    before skip=10pt, 
    after skip=10pt 
}
\newtheorem{theorem}{Theorem}[section]
\newtheorem{lemma}[theorem]{Lemma}
\newtheorem{remark}{Remark}[section] 
\newtheorem{problem}[theorem]{Problem}
\newtheorem{case}{Case}
\date{\today}
\title{\bf Stabilities of the Kleitman diameter theorem\footnote{E-mail addresses: \url{wuyjmath@163.com} (Y. Wu), \url{ytli0921@hnu.edu.cn} (Y. Li), \url{fenglh@163.com} (L. Feng), 
\url{jiuqiang68@126.com} (J. Liu), 
\url{yuguihai@126.com} (G. Yu).}}
\author{
{\small  Yongjiang Wu$^a$,\ \ Yongtao Li$^{a}$,\ \ Lihua Feng$^{a,}$\footnote{Corresponding author} ,\ \ Jiuqiang Liu$^b$,\ \ Guihai Yu$^c$}\\[2mm]
\small $^a$School of Mathematics and Statistics, HNP-LAMA, Central South University\\
 \small Changsha, Hunan, 410083, China\\
 \small $^b$Department of Mathematics, Eastern Michigan University\\
 \small  Ypsilanti, MI, 48197, USA \\
 \small $^c$College of Big Data Statistics, Guizhou University of Finance and Economics\\
 \small  Guiyang, Guizhou, 550025, China \\
 }
\begin{document}
\maketitle
\begin{abstract}
Let $\mathcal{F}$ be a family of subsets of $[n]$. The diameter of $\mathcal{F}$ is the maximum size of symmetric differences among pairs of its members. Resolving a conjecture of Erd\H{o}s,  Kleitman  determined the maximum size of a family with fixed diameter, which states that a family with diameter $s$ has cardinality at most that of a Hamming ball of radius $s/2$. Specifically, if $\mathcal{F} \subseteq 2^{[n]}$ is a family with diameter $s$, then for $s=2d$, $|\mathcal{F}|\le \sum_{i=0}^d {n \choose i}$; 
for $s=2d+1$,
$|\mathcal{F}|\le \sum_{i=0}^d {n \choose i} + {n-1 \choose d}$. 
This result is known as the Kleitman diameter theorem, which generalizes both the Katona union theorem and the Erd\H{o}s--Ko--Rado theorem. In 2017, Frankl provided a complete characterization of the extremal families of Kleitman's theorem and provided a stability result. 
In this paper, 
we determine the extremal families of Frankl's theorem and establish a further stability result of Kleitman's theorem. This solves a recent problem proposed by Li and Wu. Our findings constitute the second stability for the Kleitman diameter theorem. 
\end{abstract}

 {\bf AMS Classification}:  05C65; 05D05 

{\bf Key words}: Extremal set theory; Kleitman diameter theorem; Stability

\section{Introduction}

Extremal set theory 
is an important topic of combinatorics, and it always deals with the problems on families of subsets of an $n$-element set. One of the central problem is to determine or estimate the maximum size of a family that satisfies certain constraints, including the restricted intersection or union. 
To begin with, we fix the following notation. 
For two integers $a\leq b$, let  $[a, b]=\{a, a+1, \ldots, b\}$, and simply let $[n]=[1,n]$. Let $2^{[n]}$ denote the power set of $[n]$. For any $0 \leq k \leq n$, let $\binom{[n]}{k}$ denote the collection of all $k$-element subsets of $[n]$. A family $\mathcal{F} \subseteq 2^{[n]}$ is called $k$-\textit{uniform} if $\mathcal{F} \subseteq \binom{[n]}{k}$. A family $\mathcal{F} \subseteq 2^{[n]}$ is called $t$-\textit{intersecting} if $|F\cap F^{\prime}|\geq t$
 for all $F, F^{\prime}\in \mathcal{F}$. If $t=1$, $\mathcal{F}$ is simply called \textit{intersecting}. Two families  $\mathcal{F}, \mathcal{G}\subseteq 2^{[n]}$ are  \textit{isomorphic} if there exists a permutation $\sigma$ on $[n]$ such that $\mathcal{G}=\left\{\{\sigma(x):x\in F\}: F\in \mathcal{F}\right\}$, and we write $\mathcal{F}\cong\mathcal{G}$. 
 We denote by $\mathcal{F}\subseteq\mathcal{G}$ if $\mathcal{F}$ is isomorphic to a subfamily of $\mathcal{G}$.

\subsection{Restricted intersection: Erd\H{o}s-Ko-Rado's theorem}

 Erd\H{o}s, Ko and Rado \cite{E61} determined the maximum possible size of an 
intersecting family of $k$-subsets of $[n]$. 
This result is one of the cornerstones of extremal set theory, 
and since then, a vast number of results
of a similar flavour have been investigated for a range of different mathematical structures; see the surveys \cite{Ellis2021,FT16}. 

\begin{theorem}[Erd\H{o}s, Ko and Rado \cite{E61}] \label{E61}
Let $k \geq 2$ and $n \geq 2 k$ be integers. Let $\mathcal{F} \subseteq\binom{[n]}{k}$ be an intersecting family. Then
$$
|\mathcal{F}|\leq\binom{n-1}{k-1}.
$$
For $n \geq 2 k+1$, the equality holds if and only if $\mathcal{F}=\left\{F\in \binom{[n]}{k}: x\in F\right\}$ for some $x \in [n]$.
\end{theorem}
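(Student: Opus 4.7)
The plan is to use Katona's elegant cyclic permutation argument to establish the bound, and then invoke the shifting technique to pin down the extremal family.

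First I would prove the following combinatorial lemma: on a cycle of length $n \geq 2k$, among the $n$ arcs of length $k$, any subfamily of pairwise intersecting arcs has cardinality at most $k$. This follows by fixing one arc $A$ and observing that any other arc intersecting $A$ must start at one of the $k-1$ positions immediately to the left or right of $A$'s starting position, and that two such arcs on opposite sides of $A$ cannot themselves intersect when $n \geq 2k$.

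Next I would carry out the standard double count. Let $\Sigma$ be the collection of cyclic orderings of $[n]$, so $|\Sigma| = (n-1)!$. Counting pairs $(\sigma, F)$ with $\sigma \in \Sigma$, $F \in \mathcal{F}$, and $F$ appearing as a $k$-arc of $\sigma$: from the $\sigma$-side each $\sigma$ contributes at most $k$ such $F$, while from the $F$-side each $F$ arises as an arc in exactly $k!(n-k)!$ orderings. Comparing gives $|\mathcal{F}|\cdot k!(n-k)! \leq k(n-1)!$, which rearranges to $|\mathcal{F}| \leq \binom{n-1}{k-1}$.

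For the uniqueness under $n \geq 2k+1$, I would apply shifting operations $S_{ij}$ with $1 \leq i < j \leq n$ to reduce to the case that $\mathcal{F}$ is left-compressed, using the standard facts that shifting preserves both $|\mathcal{F}|$ and the intersecting property. Then an induction on $n$ (with the base $n = 2k+1$ handled directly, for example via the Hilton--Milner bound on non-star intersecting families) shows that a shifted intersecting family meeting the bound must coincide with the star $\{F \in \binom{[n]}{k} : 1 \in F\}$; reversing the shifting identifies the original $\mathcal{F}$ with some star $\{F : x \in F\}$.

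The main obstacle will be the uniqueness rather than the bound itself, which drops out cleanly from the double count. The subtle point is the boundary case $n = 2k$: there many non-star extremal families exist (for instance, selecting one set from every complementary pair), so the argument must exploit the strict inequality $n \geq 2k+1$ in a delicate way, typically by tracing when equality can survive an $S_{ij}$-operation and verifying that the induction forces a common element in every $F \in \mathcal{F}$.
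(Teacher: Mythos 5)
The paper does not prove Theorem \ref{E61}; it is quoted as a classical result from \cite{E61}, so there is no internal proof to compare against. Your proposal is a correct and standard route: Katona's cycle lemma (at most $k$ pairwise intersecting $k$-arcs on an $n$-cycle when $n\geq 2k$) together with the double count over the $(n-1)!$ cyclic orderings gives $|\mathcal{F}|\,k!(n-k)!\leq k(n-1)!$, i.e.\ $|\mathcal{F}|\leq\binom{n-1}{k-1}$, and all the numbers check out. Two remarks on the uniqueness part. First, the phrase ``reversing the shifting identifies the original $\mathcal{F}$ with some star'' is the one genuinely soft spot: the shift $S_{ij}$ is not invertible, and a family whose shift is a star need not itself be a star, so the standard arguments do not literally reverse the operation but instead show that a non-star family attaining the bound leads to a contradiction (you do acknowledge this in your last sentence, but it is the step that actually needs to be executed). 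Second, once you allow yourself the Hilton--Milner bound (Theorem \ref{H67} of the paper), the entire shifting-and-induction apparatus is unnecessary for $n\geq 2k+1$: any non-EKR intersecting family has size at most $\binom{n-1}{k-1}-\binom{n-k-1}{k-1}+1<\binom{n-1}{k-1}$, so an extremal family is contained in a full star and, having the same cardinality, equals it. This is not circular, since Hilton--Milner can be proved (e.g.\ via the cross-intersecting inequality in Lemma \ref{F16}) without the uniqueness part of Erd\H{o}s--Ko--Rado, but it does make most of your uniqueness machinery redundant. Your observation about the boundary case $n=2k$ (complementary-pair families) is correct and is exactly why the equality characterization is restricted to $n\geq 2k+1$.
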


A family $\mathcal{F}$ that satisfies  $\mathcal{F}=\left\{F\in \binom{[n]}{k}: x\in F\right\}$ is called a \textit{full star} centered at $x$, and $\mathcal{F}$ is called \textit{EKR} if
$\mathcal{F}$ is contained in a full star. Such a family is a ``trivial" example of intersecting family. If $\mathcal{F}$ is not EKR, then  $\cap_{F\in\mathcal{F}}F=\emptyset$ and $\mathcal{F}$ is called \textit{non-trivial}.
In 1967,  Hilton and Milner \cite{H67} determined the maximum size of a non-trivial $k$-uniform intersecting family.

\begin{theorem}[Hilton and Milner \cite{H67}] \label{H67}
Let $k \geq 2$ and $n \geq 2 k+1$ be integers. Let $\mathcal{F} \subseteq\binom{[n]}{k}$ be an intersecting family.
If $\mathcal{F}$ is not EKR, then
$$
|\mathcal{F}|\leq\binom{n-1}{k-1}-\binom{n-k-1}{k-1}+1.
$$ 
\end{theorem}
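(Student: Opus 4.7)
I would prove this via the classical shifting technique. Recall that the shift $S_{ij}$ for $i<j$ replaces each $F\in\mathcal{F}$ containing $j$ but not $i$ by $(F\setminus\{j\})\cup\{i\}$, whenever the latter is not already in $\mathcal{F}$. This operation preserves $|\mathcal{F}|$ and the intersecting property, so after finitely many shifts we obtain a left-compressed (shifted) intersecting family of the same cardinality. One technical subtlety is that shifting can destroy non-triviality; I would handle this case separately by observing that if some $S_{ij}(\mathcal{F})$ becomes trivial while $\mathcal{F}$ is not, then the common element of $S_{ij}(\mathcal{F})$ must be $i$, which pins down enough of the structure of $\mathcal{F}$ to estimate $|\mathcal{F}|$ by hand. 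Henceforth we may assume $\mathcal{F}$ is both shifted and non-trivial.

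\textbf{Key structural claim.} For such $\mathcal{F}$, the low set $A:=\{2,3,\ldots,k+1\}$ must lie in $\mathcal{F}$. Indeed, non-triviality yields some $F\in\mathcal{F}$ with $1\notin F$; writing $F=\{j_1<\cdots<j_k\}$ one has $j_\ell\ge \ell+1$ for each $\ell$, and iterated leftward shifts of the entries of $F$ (which remain inside the shifted family $\mathcal{F}$) transform $F$ into $A$. Since $\mathcal{F}$ is intersecting, every element of $\mathcal{F}$ therefore meets $A$.

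\textbf{Counting and the main obstacle.} Partition $\mathcal{F}=\mathcal{F}_1\sqcup\mathcal{F}_2$ according to whether $1\in F$. Every element of $\mathcal{F}_1$ is a $k$-set containing $1$ and meeting $A$, and there are exactly $\binom{n-1}{k-1}-\binom{n-k-1}{k-1}$ such sets. If $\mathcal{F}_2=\{A\}$, the desired bound follows immediately. The main obstacle is the case $|\mathcal{F}_2|\ge 2$: here I would view $\mathcal{F}_1':=\{G\setminus\{1\}:G\in\mathcal{F}_1\}\subseteq\binom{[2,n]}{k-1}$ and $\mathcal{F}_2\subseteq\binom{[2,n]}{k}$ as a cross-intersecting pair, and quantify how much each extra set in $\mathcal{F}_2$ forces $|\mathcal{F}_1'|$ to shrink. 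For any $F^*\in\mathcal{F}_2$ distinct from $A$, one has $|A\cup F^*|\ge k+1$, so by inclusion--exclusion the bound on $|\mathcal{F}_1|$ drops by at least $\binom{n-k-1}{k-1}-\binom{n-k-2}{k-1}=\binom{n-k-2}{k-2}\ge 1$ (using $n\ge 2k+1$). Iterating this observation -- or bundling it into a single cross-intersecting estimate -- shows that the gain in $|\mathcal{F}_2|$ is always outweighed by the loss in $|\mathcal{F}_1|$ beyond the first element of $\mathcal{F}_2$, yielding $|\mathcal{F}_1|+|\mathcal{F}_2|\le\binom{n-1}{k-1}-\binom{n-k-1}{k-1}+1$.
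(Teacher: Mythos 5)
The paper does not prove Theorem~\ref{H67} itself (it is quoted from the literature), but it records precisely the tool needed to close your argument, namely Lemma~\ref{F16}; measured against that, your write-up has a genuine gap in its final step. Your reduction to a shifted non-trivial family and the structural claim $[2,k+1]\in\mathcal{F}$ are standard and correct. The problem is the "iterating this observation" step. For a single extra set $F^*\in\mathcal{F}_2\setminus\{A\}$ you correctly compute that the number of $k$-sets containing $1$ and meeting both $A$ and $F^*$ is at least $\binom{n-k-2}{k-2}$ smaller than the number meeting $A$ alone. But these losses do not add up over several $F^*$: two different members of $\mathcal{F}_2$ can exclude essentially the same sets from $\mathcal{F}_1$ (the excluded families are governed by the unions $A\cup F^*$, which may coincide), so knowing that each $F^*$ individually costs at least one set does not give $|\mathcal{F}_1|\le\binom{n-1}{k-1}-\binom{n-k-1}{k-1}+1-|\mathcal{F}_2|+1$. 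The statement you would need --- that for the cross-intersecting pair $\mathcal{F}_1'\subseteq\binom{[2,n]}{k-1}$, $\mathcal{F}_2\subseteq\binom{[2,n]}{k}$ one has $|\mathcal{F}_1'|+|\mathcal{F}_2|\le\binom{n-1}{k-1}-\binom{n-k-1}{k-1}+1$ --- is \emph{false} for general non-empty cross-intersecting pairs of these uniformities; it requires the additional hypothesis that $\mathcal{F}_2$ is $2$-intersecting, which is exactly the hypothesis of Lemma~\ref{F16} with $t=1$.

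That missing hypothesis is available in your setting, but you never establish or invoke it: for a shifted intersecting family, any two sets $F,F'\in\mathcal{F}$ with $1\notin F\cup F'$ satisfy $|F\cap F'|\ge 2$, since if $F\cap F'=\{x\}$ then shiftedness puts $(F\setminus\{x\})\cup\{1\}$ into $\mathcal{F}$, and this set is disjoint from $F'$. With that observation, Lemma~\ref{F16} applied on the ground set $[2,n]$ (of size $n-1\ge 2k$) to $\mathcal{F}_2$ and $\mathcal{F}_1'$ finishes the proof in one line. So the skeleton of your argument is the right one, but the "bundled cross-intersecting estimate" you defer is the actual mathematical content of the theorem, and as written the per-set accounting does not deliver it. (The case where shifting trivializes the family is also only sketched, but that part is routine; the substantive gap is the one above.)
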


This bound is substantial smaller than the bound 
given by Theorem \ref{E61}. Moreover, 
Hilton and Milner identified the  extremal families. 
Namely, the equality in Theorem \ref{H67} holds 
if and only if
$\mathcal{F}\cong \mathcal{HM}(n, k)$, where $ \mathcal{HM}(n, k) \overset{\rm def}= 
\big\{F \in\binom{[n]}{k}: 1 \in F, F \cap[2, k+1] \neq \emptyset \big\} \cup\{[2, k+1]\}$; 
and when $k=3$, 
$\mathcal{F}\cong \mathcal{HM}(n,3)$ or $\mathcal{T}(n, 3)$, where $\mathcal{T}(n, 3) \overset{\rm def}= 
\big\{F \in\binom{[n]}{3}:|F \cap[3]| \geq 2\big\}$.

\medskip 
 In 2017, Han and Kohayakawa \cite{H17} showed  a further stability of Theorem \ref{H67}.  
 A family $\mathcal{F}$ is called \textit{HM} if
$\mathcal{F}$ is isomorphic to a subfamily of $\mathcal{HM}(n, k)$.

\begin{theorem}[Han and Kohayakawa \cite{H17}] 
\label{H17}
Let $k \geq 3$ and $ n\geq 2 k+1$ be integers. Let $\mathcal{F} \subseteq\binom{[n]}{k}$ be an intersecting family. If $\mathcal{F}$ is neither EKR nor HM, and if $k=3, \mathcal{F} \nsubseteq \mathcal{T}(n, 3)$, then
$$
|\mathcal{F}|\leq\binom{n-1}{k-1}-\binom{n-k-1}{k-1}-\binom{n-k-2}{k-2}+2.
$$
\end{theorem}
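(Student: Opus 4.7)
The plan is to apply the shifting technique to reduce to shifted intersecting families, then exploit shift-minimality to force small "kernel" sets into $\mathcal{F}$ and count. First I would replace $\mathcal{F}$ by its left-compression $\mathcal{F}^{*}$, iterating the shifts $S_{ij}$ for $i<j$; this preserves $|\mathcal{F}|$ and the intersecting property. The first delicate point is to check that the three forbidden shapes (EKR, HM, and for $k=3$ being contained in $\mathcal{T}(n,3)$) can be assumed to survive the compression: if $\mathcal{F}^{*}$ were EKR (automatically a star at $1$, since shifted), or $\mathcal{F}^{*}\subseteq\mathcal{HM}(n,k)$, or $\mathcal{F}^{*}\subseteq\mathcal{T}(n,3)$, then the original $\mathcal{F}$ would already be isomorphic to a subfamily of the same family, contradicting the hypothesis. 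From now on I work with a shifted $\mathcal{F}$.

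Next I extract two compulsory sets. Write $\mathcal{F}_{1}=\{F\in\mathcal{F}:1\in F\}$ and $\mathcal{F}_{\bar1}=\mathcal{F}\setminus\mathcal{F}_{1}$. Since $[2,k+1]$ is the shift-minimum among $k$-sets avoiding $1$, the non-EKR assumption yields $[2,k+1]\in\mathcal{F}$. For any other $G\in\mathcal{F}_{\bar1}$ the coordinate comparison $G=\{b_{1}<\cdots<b_{k}\}$ with $b_{k}\geq k+2$ shows that $[2,k]\cup\{k+2\}$ is shift-dominated by $G$, so the non-HM assumption forces $[2,k]\cup\{k+2\}\in\mathcal{F}$. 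Every $F\in\mathcal{F}_{1}$ must then meet both $[2,k+1]$ and $[2,k]\cup\{k+2\}$; the sets in $\binom{[n]}{k}$ containing $1$ that meet the first but miss the second are precisely $\{\{1,k+1\}\cup T:T\in\binom{[k+3,n]}{k-2}\}$, giving
\[
|\mathcal{F}_{1}|\leq \binom{n-1}{k-1}-\binom{n-k-1}{k-1}-\binom{n-k-2}{k-2}.
\]
If $\mathcal{F}_{\bar1}=\{[2,k+1],[2,k]\cup\{k+2\}\}$, the announced bound follows immediately, and it is tight.

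The remaining work is to show that enlarging $\mathcal{F}_{\bar1}$ beyond these two sets costs more than it gains. A third $G\in\mathcal{F}_{\bar1}$, classified by its shape via shiftedness (the natural candidates being $[2,k-1]\cup\{k+1,k+2\}$, $[2,k]\cup\{k+3\}$, and a few further cases), imposes an additional intersection constraint on every $F\in\mathcal{F}_{1}$; I would show by a direct count that the number of newly excluded sets in $\mathcal{F}_{1}$ exceeds the number of added sets in $\mathcal{F}_{\bar1}$ for all $k\geq 4$ and $n\geq 2k+1$. For $k=3$, the analogous balance can fail: the family $\mathcal{T}(n,3)$ has $|\mathcal{F}_{\bar1}|=n-3$ and $|\mathcal{F}_{1}|=2n-5$, and it is precisely to rule out this single obstruction that the hypothesis $\mathcal{F}\nsubseteq\mathcal{T}(n,3)$ is needed. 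The main obstacle, and the technical heart of the proof, is the uniform case analysis of possible $\mathcal{F}_{\bar1}$ when $|\mathcal{F}_{\bar1}|\geq 3$: one must argue that any shifted intersecting $\mathcal{F}_{\bar1}$ not of $\mathcal{T}(n,3)$-type forces sufficiently many exclusions in $\mathcal{F}_{1}$, and I expect this to require a sub-lemma asserting that after two shift-minimal sets, each further shift-minimal set in $\mathcal{F}_{\bar1}$ forbids at least $\binom{n-k-2}{k-2}+1$ sets from $\mathcal{F}_{1}$.
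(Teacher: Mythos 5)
First, a point of context: the paper does not prove Theorem \ref{H17} at all — it is quoted from Han and Kohayakawa \cite{H17} (the authors' own alternative proof is in \cite{W240}) — so there is no internal proof to compare against, and your proposal must be judged on its own terms. Judged that way, it has a genuine gap at its very first step. You claim that if the fully compressed family $\mathcal{F}^{*}$ were EKR, HM, or contained in $\mathcal{T}(n,3)$, then the original $\mathcal{F}$ would already be isomorphic to a subfamily of the same configuration. This is false: compression is not an isomorphism and does not reflect containment in a fixed configuration backwards. For example, $\{\{1,2,3\},\{1,4,5\},\{2,4,6\}\}$ is intersecting and not EKR, yet the single shift $S_{12}$ turns it into the star $\{\{1,2,3\},\{1,4,5\},\{1,4,6\}\}$. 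The families for which the theorem is near-sharp are precisely those that collapse into $\mathcal{HM}(n,k)$ or $\mathcal{T}(n,3)$ under one more shift, so this case cannot be discarded; it is where the real work lies. The standard repair — and the architecture this very paper adopts for its own main theorem (Cases \ref{case2}--\ref{case10}) — is to stop at the last shift $S_j$ before the family enters a forbidden configuration and to analyze the preimage $\mathcal{G}$ using the structure of $S_j(\mathcal{G})$; your proposal contains none of that analysis.

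The part of the argument you do carry out is sound: for a shifted non-EKR, non-HM family, domination does force $[2,k+1]$ and $[2,k]\cup\{k+2\}$ into $\mathcal{F}$, and your count of the sets through $1$ meeting both is correct, which settles the case $|\mathcal{F}_{\bar 1}|=2$. But the case $|\mathcal{F}_{\bar 1}|\geq 3$ — the technical heart, as you say — is only gestured at, and the sub-lemma you propose (each further set in $\mathcal{F}_{\bar 1}$ forbids at least $\binom{n-k-2}{k-2}+1$ sets from $\mathcal{F}_{1}$) is both stronger than needed and false as stated: for $k=3$ a third set such as $[2,3]\cup\{6\}$ excludes exactly one further triple from $\mathcal{F}_{1}$, which is exactly why $\mathcal{T}(n,3)$ survives as an exception; and for $k\geq4$ the per-set exclusion counts are not additive across the members of $\mathcal{F}_{\bar 1}$, so a set-by-set tally does not close the argument when $\mathcal{F}_{\bar 1}$ is large. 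One genuinely needs a global inequality bounding $|\mathcal{F}_{1}|+|\mathcal{F}_{\bar 1}|$ for cross-intersecting pairs, in the spirit of Lemmas \ref{W231} and \ref{HK32}, rather than incremental exclusion counting.
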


Recently, the current authors \cite{W240} 
provided a unified approach to revisiting 
Theorems \ref{H67} and \ref{H17}.  
Apart from the above results, 
many stability results (together with a wide variety of different methods) for the 
Erd\H{o}s-Ko-Rado theorem have been obtained in the literature; see, 
e.g., \cite{EKL2019,GXZ2024,H24,Kee2008,KL2020,K19,KM17,KZ2018,Mub2007}. Moreover, the stability results for $t$-intersecting families are also well-studied in recent years \cite{BL2023,BL2022,CLW2021,CLLW2022,FW2024+}. 
Besides being interesting in their own right, such stability
results can often be served as key ingredients for determining the exact Tur\'{a}n-type results for set families.  
In this paper, we shall pay attention to the stability results on the extremal problems for non-uniform families with restricted union and symmetric difference.

\subsection{Restricted union: Katona's theorem}

 Let $\mathcal{F} \subseteq 2^{[n]}$ be a family. We say that $\mathcal{F}$ is $s$-\textit{union} if $|F\cup F^{\prime}|\leq s$ for all $F, F^{\prime}\in \mathcal{F}$.
Obviously, $\mathcal{F}$ is $t$-intersecting if and only if the family $\mathcal{F}^c=\{F^c=[n]\backslash F: F\in \mathcal{F}\}$ is $(n-t)$-union. Since $|\mathcal{F}^c|=|\mathcal{F}|$, 
determining the maximum size of a 
$t$-intersecting family 
is equivalent to determining the maximum size of the corresponding $s$-union family. 
The maximum size of an $s$-union family on $[n]$ can be easily obtained when $s=0,1,n-1$ and $n$.  
In 1964, Katona \cite{K64} determined the maximum size of $s$-union families for $2 \leq s\leq n-2$. (The original statement of Katona \cite{K64} is written under the condition on  intersections of sets, instead of the union of sets). 

\begin{theorem}[Katona \cite{K64}] \label{K64}
 Let $2 \leq s \leq n-2$ be an integer. Suppose that  $\mathcal{F} \subseteq 2^{[n]}$ is $s$-union.
 \begin{itemize}
\item[\rm (i)]  If $s=2 d$ for an integer $d \geq 1$, then
$
|\mathcal{F}| \leq \sum_{i=0}^d \binom{n}{i}$.

\item[\rm (ii)]  If $s=2 d+1$ for an integer $d \geq 1$, then $|\mathcal{F}| \leq \sum_{i=0}^d \binom{n}{i}+\binom{n-1}{d}$.
 \end{itemize}
\end{theorem}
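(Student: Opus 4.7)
The plan is to prove Theorem~\ref{K64} by the classical shifting (compression) technique combined with induction on $n$.

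\textit{Step 1: Reduction via shifting and downward closure.} For each pair $1 \leq i < j \leq n$, I would apply the standard compression operator $S_{ij}$: set $S_{ij}(F) = (F \setminus \{j\}) \cup \{i\}$ if $j \in F$, $i \notin F$, and $(F \setminus \{j\}) \cup \{i\} \notin \mathcal{F}$; otherwise $S_{ij}(F) = F$. A routine case analysis shows that $S_{ij}(\mathcal{F}) = \{S_{ij}(F) : F \in \mathcal{F}\}$ has the same cardinality as $\mathcal{F}$ and remains $s$-union. After iterating, we may assume $\mathcal{F}$ is left-shifted. Since replacing any $F \in \mathcal{F}$ by a subset $F' \subsetneq F$ only shrinks pairwise unions (hence preserves the $s$-union property), we may further assume that a maximum $\mathcal{F}$ is downward closed.

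\textit{Step 2: Induction on $n$.} I would partition $\mathcal{F}$ via the element $n$: let $\mathcal{A} = \{F \in \mathcal{F} : n \notin F\}$ and $\mathcal{B} = \{F \setminus \{n\} : n \in F \in \mathcal{F}\}$, both lying in $2^{[n-1]}$. One checks that $\mathcal{A}$ is $s$-union and $\mathcal{B}$ is $(s-1)$-union on $[n-1]$ (since adjoining $n$ to two members of $\mathcal{B}$ raises their union size by exactly one), that downward closure of $\mathcal{F}$ forces $\mathcal{B} \subseteq \mathcal{A}$, and that for $F \in \mathcal{A}$ and $F' \in \mathcal{B}$ the $s$-union condition applied to $F$ and $F' \cup \{n\}$ yields the cross-union bound $|F \cup F'| \leq s - 1$. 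Base cases ($n \leq s + 2$, or small $s$) are handled directly.

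\textit{Step 3: Main obstacle and conclusion.} The main difficulty is that bounding $|\mathcal{A}| \leq f(n-1, s)$ and $|\mathcal{B}| \leq f(n-1, s-1)$ naively by induction, where $f(n, s)$ denotes the claimed bound, overshoots $f(n, s)$ by $\binom{n-2}{d-1}$ when $s = 2d$ and by $\binom{n-2}{d}$ when $s = 2d+1$. To close this gap, I would strengthen the inductive hypothesis into a joint statement about cross-union pairs $(\mathcal{X}, \mathcal{Y})$ with $\mathcal{Y} \subseteq \mathcal{X}$, $\mathcal{X}$ being $s$-union, $\mathcal{Y}$ being $(s-1)$-union, and $\mathcal{X}, \mathcal{Y}$ being cross-$(s-1)$-union, and prove this strengthened statement by a parallel induction. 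The parity split between $s = 2d$ and $s = 2d+1$ emerges naturally from the recursion, with the extra $\binom{n-1}{d}$ term in the odd case matching the extremal family $\{F \subseteq [n] : |F| \leq d\} \cup \{F \subseteq [n] : |F| = d+1,\, 1 \in F\}$. Carrying out the cross-union strengthening cleanly, and extracting the correct boundary behavior from the recursion, is the principal technical step.
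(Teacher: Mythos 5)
This is a classical result that the paper cites from Katona's 1964 article rather than proving; the authors only remark that the original proof goes through a shadow theorem for intersecting families, and that alternative proofs exist. So there is no in-paper proof to compare against, and your proposal must stand on its own. Your Steps 1 and 2 are sound: compressions $S_{ij}$ preserve the $s$-union property (pass to complements, where they become compressions preserving $(n-s)$-intersection), downward closure preserves it as well, and your computation of the decomposition $(\mathcal{A},\mathcal{B})$ and of the overshoot $\binom{n-2}{d-1}$ (resp.\ $\binom{n-2}{d}$) in the naive induction is correct.

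The gap is in Step 3, which is exactly where the theorem lives. The strengthened statement you propose --- $\mathcal{Y}\subseteq\mathcal{X}$ on $[m]$, $\mathcal{X}$ $s$-union, $\mathcal{Y}$ $(s-1)$-union, cross-$(s-1)$-union, with conclusion $|\mathcal{X}|+|\mathcal{Y}|\le f(m+1,s)$ --- is not a strengthening at all: given such a pair, the family $\mathcal{X}\cup\{Y\cup\{m+1\}:Y\in\mathcal{Y}\}$ is an $s$-union family on $[m+1]$ of size $|\mathcal{X}|+|\mathcal{Y}|$, so the pair statement on $[m]$ is \emph{equivalent} to the theorem on $[m+1]$, and invoking it in the induction is circular. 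Moreover, the proposed "parallel induction" does not close: decomposing a pair by a further ground element produces a quadruple of families with a web of cross-union conditions, not another pair, and nothing in the proposal explains how to recombine these (even with left-shiftedness) into instances of the inductive hypothesis. This is precisely the obstruction that the known proofs circumvent by entirely different means --- Katona's intersection-shadow theorem applied to the $2j$-intersecting levels $\mathcal{F}_{d+j}$ of the complex, or (in the spirit of this paper's toolkit) cross-intersecting inequalities such as Lemma~\ref{F16} applied to the level pairs $(\mathcal{F}_{d+j},\mathcal{F}_{d-j+1})$, which give $|\mathcal{F}_{d+j}|+|\mathcal{F}_{d-j+1}|\le\binom{n}{d-j+1}$ and sum to the stated bound. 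As written, the proposal reduces the theorem to an unproven claim that is at least as hard as the theorem itself.
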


 The extremal families achieving the bound are also determined \cite{K64}. Namely, 
the equality in (i) holds only for 
$\mathcal{K}(n, 2 d) \overset{\rm def}{=} \left\{F\subseteq [n]: |F|\leq d\right\}$;   
The equality in (ii) holds only for 
$\mathcal{K}(n, 2 d+1)\overset{\rm def}{=} \left\{F\subseteq [n]: |F|\leq d\right\} \cup\left\{F \in\binom{[n]}{d+1}: y \in F\right\}$ for some $y\in [n]$.  
The original proof of Theorem \ref{K64} 
primarily relies on a theorem on shadows of intersecting families. 
Alternative methods of Katona's theorem 
can be found in \cite{AK1999,AK2005,Bol1986,H20,Wang1977}. 
For more related generalizations, we refer the interested readers to \cite{BL2023,FT2013,FT2016,KL2020} and the references therein. 

In 2017, Frankl \cite{F171}  determined the
sub-optimal $s$-union families for $2 \leq s\leq n-2$.

\begin{theorem}[Frankl  \cite{F171}] 
\label{F171} Let $2 \leq s \leq n-2$ be an integer. Suppose that  $\mathcal{F} \subseteq 2^{[n]}$ is $s$-union. 
 \begin{itemize}
\item[\rm (i)]  If $s=2 d$ for  $d \geq 1$ and $\mathcal{F} \nsubseteq \mathcal{K}(n, 2 d)$, then
$$
|\mathcal{F}| \leq \sum_{0 \leq i \leq d}\binom{n}{i}-\binom{n-d-1}{d}+1.
$$

\item[\rm (ii)]   If $s=2 d+1$ for  $d \geq 1$ and $\mathcal{F} \nsubseteq \mathcal{K}(n, 2 d+1)$, then
$$ 
|\mathcal{F}| \leq \sum_{0 \leq i \leq d}\binom{n}{i}+\binom{n-1}{d}-\binom{n-d-2}{d}+1.
$$ 
\end{itemize}
 \end{theorem}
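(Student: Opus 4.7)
The plan is to apply the shifting technique, identify a canonical ``witness'' in $\mathcal{F}$ that prevents $\mathcal{F} \subseteq \mathcal{K}(n,s)$, and then execute a counting argument using the union constraint with this witness. For the shifting reduction, one checks by a case-by-case analysis over the four patterns of $i$ and $j$ in two sets that shifting never increases $|F \cup F'|$, so the $s$-union property is preserved while $|\mathcal{F}|$ remains fixed; since $\mathcal{K}(n, s)$ is itself shifted, the hypothesis $\mathcal{F} \nsubseteq \mathcal{K}(n, s)$ transfers (up to isomorphism) to the shifted version. We may therefore assume $\mathcal{F}$ is shifted.

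For the even case $s = 2d$, set $r := \max\{|F|: F \in \mathcal{F}\}$; the hypothesis and shifted-ness force $r \ge d + 1$ and $[r] \in \mathcal{F}$. The decisive sub-case is $r = d + 1$. The union condition with $[d+1]$ yields $|F \cap [d+2, n]| \le d - 1$ for every $F \in \mathcal{F}$. Decompose $\mathcal{F} = \mathcal{A} \sqcup \mathcal{B}$ into sets of size at most $d$ and of size $d+1$. The constraint excludes exactly the $d$-subsets of $[d+2, n]$ from $\mathcal{A}$, giving $|\mathcal{A}| \le \sum_{i=0}^d \binom{n}{i} - \binom{n-d-1}{d}$. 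If $|\mathcal{B}| = 1$ the bound follows immediately. Otherwise $\mathcal{B}$ is a shifted, $2$-intersecting, $(d+1)$-uniform family, and its extra members beyond $[d+1]$ take prescribed initial forms such as $\{1, \ldots, d, d+2\}$; each such $F'$ further excludes from $\mathcal{A}_d := \mathcal{A} \cap \binom{[n]}{d}$ all $d$-subsets of $[n] \setminus F'$, and a pairwise inclusion--exclusion shows that the newly excluded $d$-subsets exceed $\binom{n-d-1}{d}$ by at least $\binom{n-d-1}{d} - \binom{n-d-2}{d} = \binom{n-d-2}{d-1} \ge 1$, more than compensating for the extra member of $\mathcal{B}$. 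The sub-case $r \ge d + 2$ is easier: the sharper constraint $|F \cap [r+1, n]| \le 2d - r$ together with a direct count (or a short induction on $n$ using Theorem~\ref{K64} on the trace of $\mathcal{F}$ on $[n]\setminus [r]$) yields a strictly smaller upper bound than the target.

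The odd case $s = 2d + 1$ follows the same outline, with a slightly richer witness analysis since the baseline $\mathcal{K}(n, 2d+1)$ includes a star of $(d+1)$-sets through a distinguished element, which under shifting is element $1$. The hypothesis forces either (a) some $(d+1)$-set in $\mathcal{F}$ misses $1$, whence by shifted-ness $\{2, \ldots, d+2\} \in \mathcal{F}$, and the union condition with $\{2, \ldots, d+2\}$ forbids exactly the $d$-subsets that contain $1$ but are disjoint from $\{2, \ldots, d+2\}$, a count of $\binom{n-d-2}{d}$, reproducing the target; or (b) some $F \in \mathcal{F}$ has size $\ge d+2$, forcing $[d+2] \in \mathcal{F}$, in which case the argument mirrors the $r \ge d + 2$ analysis from the even case.

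The principal obstacle is the sub-case $r = d + 1$ with $|\mathcal{B}| \ge 2$. A $(d+1)$-uniform $2$-intersecting family can a priori be large, so it is not immediately evident that extra members of $\mathcal{B}$ cannot tip $|\mathcal{F}|$ above the target. The resolution is to exploit the shifted structure of $\mathcal{B}$ to pin down precisely which $d$-subsets of $\mathcal{A}_d$ each new member of $\mathcal{B}$ excludes, after which a short pairwise inclusion--exclusion shows the net contribution is $\le 0$ beyond the first element of $\mathcal{B}$. A parallel subtlety arises in the odd sub-case (a) when several extra $(d+1)$-sets are present, and is handled by analogous bookkeeping that separates the $(d+1)$-sets missing $1$ from those containing $1$.
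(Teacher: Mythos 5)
First, a point of reference: the paper does not prove Theorem~\ref{F171} at all --- it is quoted from Frankl \cite{F171}, and the surrounding text records that Frankl obtained it precisely by introducing the cross-intersecting inequality reproduced here as Lemma~\ref{F16}. So your proposal has to be measured against that route. Your skeleton --- shift, locate $[r]$ for $r=\max\{|F|:F\in\mathcal{F}\}$, use the union condition against $[r]$ to turn the top levels into intersecting/cross-intersecting families, and count level by level --- is the standard one, and the easy parts (the case $|\mathcal{B}|=1$, the exclusion count $\binom{n-d-1}{d}$, the identification of the witness $\{2,\ldots,d+2\}$ in the odd case) are correct.

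The genuine gap is exactly the step you flag as the principal obstacle. What you need in the even sub-case $r=d+1$, $|\mathcal{B}|\ge 2$ is: for a nonempty $2$-intersecting $\mathcal{B}\subseteq\binom{[n]}{d+1}$ and a family $\mathcal{A}_d\subseteq\binom{[n]}{d}$ cross-intersecting with it, $|\mathcal{B}|+|\mathcal{A}_d|\le\binom{n}{d}-\binom{n-d-1}{d}+1$. That is Lemma~\ref{F16} with $t=1$, i.e.\ the theorem Frankl wrote a separate paper to establish, and ``a short pairwise inclusion--exclusion'' does not prove it. To offset $|\mathcal{B}|-1$ extra sets you must exhibit at least $|\mathcal{B}|-1$ \emph{distinct} $d$-sets, each meeting $[d+1]$ and disjoint from some member of $\mathcal{B}$; but when $\mathcal{B}$ is large (e.g.\ $\mathcal{B}=\{[d+1]\}\cup\{[d]\cup\{j\}:j\in[d+2,n]\}$) the excluded regions $\binom{[n]\setminus F}{d}$ overlap so heavily that the pairwise (Bonferroni) correction terms swamp the first-order terms, and the inequality in the direction you need gives nothing --- one needs an injection or an induction on $n$, which is the actual content of Lemma~\ref{F16}/Lemma~\ref{H672}. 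The same issue recurs in your odd sub-case (a): bounding the $(d+1)$-sets of $\mathcal{F}$ that avoid $1$ against the additional exclusions among those containing $1$ is a Hilton--Milner-type statement (Theorem~\ref{H67}), which you defer to ``analogous bookkeeping.'' Finally, in the sub-case $r\ge d+2$ the asserted ``direct count'' from $|F\cap[r+1,n]|\le 2d-r$ is not below the target when $n$ is close to $2d+2$ (for $n=2d+2$, $r=d+2$ it bounds $|\mathcal{F}|$ only by $2^{d+2}\sum_{j\le d-2}\binom{d}{j}$, which exceeds the target for large $d$), so that branch too needs the inductive argument you mention only in passing. In short, the architecture is sound and matches the known proof, but every quantitatively sharp step has been delegated to a tool (Lemma~\ref{F16}, Theorem~\ref{H67}) that the proposal neither cites nor actually proves.
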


The extremal families of Theorem \ref{F171} are provided as follows. 
 The equality in (i) holds if and only if there exists $D \in\binom{[n]}{d+1}$ such that
$
\mathcal{F}=\mathcal{H}(n, 2 d)$, where 
$$ \mathcal{H}(n, 2 d) \overset{\rm def}= 
\left\{F\subseteq [n]: |F|\leq d-1\right\}\cup\{D\} \cup\left\{F \in\binom{[n]}{d}: F \cap D \neq \emptyset\right\},
$$
or when $s=4$, $\mathcal{F}$ is isomorphic to $\mathcal{H}(n,4)$ or $ \mathcal{H}^*(n, 4)$, where 
$$\mathcal{H}^*(n, 4) \overset{\rm def}{=} \left\{F\subseteq [n]: |F|\leq 1\right\} \cup\left\{F \in\binom{[n]}{2}: F \cap[2] \neq \emptyset\right\} \cup \big\{\{1,2, i\}: i \in[3, n] \big\}.$$  
The equality in (ii) holds if and only if  there exist $D \in\binom{[n]}{d+1}$ and $y \in[n]\backslash D$ such that
$
\mathcal{F}=\mathcal{H}(n, 2 d+1)$, where 
$$ \mathcal{H}(n, 2 d+1)  \overset{\rm def}= 
\left\{F\subseteq [n]: |F|\leq d\right\} \cup\{D\} \cup\left\{F \in\binom{[n]}{d+1}: y \in F, F \cap D \neq \emptyset\right\},
$$
or when $s=5$, $\mathcal{F}$ is isomorphic to $\mathcal{H}(n,5)$ or $ \mathcal{T}(n, 5)$, where 
$$\mathcal{T}(n, 5)\overset{\rm def}= \left\{F\subseteq [n]: |F|\leq 2\right\}\cup\left\{F \in\binom{[n]}{3}:|F \cap[3]| \geq 2\right\}.$$
Note that there are two possibilities in the cases $s=4$ and $5$. 

\medskip 
 In 2024, Li and Wu \cite{L24} further determined the third optimal $s$-union families.

\begin{theorem}[Li and Wu \cite{L24}] 
\label{L24}
Let $4 \leq s \leq n-2$ be an integer. Suppose that  $\mathcal{F} \subseteq 2^{[n]}$ is an $s$-union family such that $\mathcal{F}\nsubseteq\mathcal{K}(n, s)$ and $\mathcal{F}\nsubseteq\mathcal{H}(n, s)$.
Then the following holds. 
 \begin{itemize}
\item[\rm (i)]  If $s=2 d$ for $d \geq 2$ and further $\mathcal{F}\nsubseteq \mathcal{H}^*(n, 4)$ for the case $s=4$, then
$$
|\mathcal{F}| \leq \sum_{0 \leq i \leq d}\binom{n}{i}-\binom{n-d-1}{d}-\binom{n-d-2}{d-1}+2.
$$

\item[\rm (ii)]  If $s=2 d+1$ for  $d \geq 2$  and further $\mathcal{F}\nsubseteq\mathcal{T}(n, 5)$ for the case $s=5$, then
$$
|\mathcal{F}| \leq \sum_{0 \leq i \leq d}\binom{n}{i}+\binom{n-1}{d}-\binom{n-d-2}{d}-\binom{n-d-3}{d-1}+2.
$$
 \end{itemize}
\end{theorem}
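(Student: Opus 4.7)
My plan is to build on Frankl's paradigm from the proof of Theorem~\ref{F171} and carry the stability analysis one level deeper. Let $d = \lfloor s/2 \rfloor$ and treat the even case $s = 2d$ in detail; the odd case is analogous. First I would reduce to the shifted setting: iterated $(i,j)$-shifts produce a shifted family $\mathcal{F}^*$ of the same size that is still $s$-union. Each of $\mathcal{K}(n,s)$, $\mathcal{H}(n,s)$, $\mathcal{H}^*(n,4)$, $\mathcal{T}(n,5)$ is either shift-invariant or isomorphic to a shifted canonical form, so (either by reverse-shifting or by first proving the bound for shifted $\mathcal{F}$ and then invoking isomorphism invariance) it suffices to assume $\mathcal{F}$ itself is shifted and still avoids the forbidden containments. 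Since $\mathcal{F} \nsubseteq \mathcal{K}(n,s)$, some set of size $\geq d+1$ lies in $\mathcal{F}$, and by shiftedness $[d+1] \in \mathcal{F}$.

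Next I would stratify by uniform level, writing $\mathcal{F}_i = \mathcal{F} \cap \binom{[n]}{i}$. The $s$-union condition yields $|F \cap F'| \geq |F|+|F'|-s$, so $\mathcal{F}_{\geq d+k}$ is $(2k)$-intersecting for each $k \geq 1$, each $F \in \mathcal{F}_d$ meets every member of $\mathcal{F}_{\geq d+1}$, and $[d+1] \in \mathcal{F}$ forces $|F \setminus [d+1]| \leq d-1$ for every $F \in \mathcal{F}$. Since $\mathcal{F} \nsubseteq \mathcal{H}(n,2d)$, at least one of (a) there is some $D' \in \mathcal{F}_{d+1} \setminus \{[d+1]\}$, or (b) $\mathcal{F}_{\geq d+2} \neq \emptyset$ must hold; otherwise $\mathcal{F}$ would fit inside $\mathcal{H}(n,2d)$. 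In case (a) one may take $D' = [d] \cup \{d+2\}$ by shiftedness and the $2$-intersection property, and then each $F \in \mathcal{F}_d$ must meet both $[d+1]$ and $D'$. A direct inclusion-exclusion count shows that exactly $\binom{n-d-2}{d-1}$ of the $d$-sets permitted in $\mathcal{H}(n,2d)$ (namely those $F$ with $d+1 \in F$ and $F \cap D' = \emptyset$) are excluded, against a gain of one set, producing the displacement $-\binom{n-d-2}{d-1}+1$ that yields the claimed bound. In case (b) a set $E \in \mathcal{F}_{\geq d+2}$ forces $|F \cap E| \geq 2$ for every $F \in \mathcal{F}_d$, and an analogous count gives at least the same displacement.

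The main obstacle is verifying that in every subcase the deviation from $\mathcal{H}(n,s)$ costs at least $\binom{n-d-2}{d-1}-1$ and that one cannot combine several cheap deviations to beat the bound; this will require sharp cross $t$-intersecting estimates across the three levels $d, d+1, d+2$ and shadow calculations for $2$-intersecting subfamilies of $\binom{[n]}{d+1}$. The small parameters $s=4$ and $s=5$ need to be handled separately to peel off the exceptional extremal families $\mathcal{H}^*(n,4)$ and $\mathcal{T}(n,5)$ by direct inspection. Finally, the odd case $s = 2d+1$ brings an extra $\binom{n-1}{d}$-sized star structure inside $\mathcal{F}_{d+1}$, so the case split must simultaneously track deviations from both the star and the fixed set $D$; this combined analysis, and in particular its near-collision with $\mathcal{T}(n,5)$ when $d = 2$, is where the bulk of the technical work lies.
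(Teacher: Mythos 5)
First, a point of orientation: Theorem~\ref{L24} is imported from Li and Wu \cite{L24}; the present paper states it and uses it as a black box (in Case~\ref{case1}) but contains no proof of it, so there is no in-paper argument to compare against. Judged on its own terms, your proposal identifies the right general strategy --- stratify by level $\mathcal{F}_i=\mathcal{F}\cap\binom{[n]}{i}$, exploit that the $2d$-union condition makes $\mathcal{F}_{d+k}$ $2k$-intersecting and makes $\mathcal{F}_d,\mathcal{F}_{d+1}$ cross-intersecting, and then invoke sharp cross-intersecting inequalities --- but two steps would fail as written. The first is the shifting reduction. An $(i,j)$-shift preserves the $s$-union property and the size, but it does \emph{not} preserve the hypothesis ``$\mathcal{F}$ is not contained in an isomorph of $\mathcal{H}(n,s)$ (resp.\ $\mathcal{H}^*(n,4)$, $\mathcal{T}(n,5)$)'': a family avoiding all isomorphs of $\mathcal{H}(n,s)$ can be shifted \emph{into} one, at which point the bound you would prove for shifted families avoiding $\mathcal{H}(n,s)$ simply does not apply, and neither ``reverse-shifting'' nor isomorphism invariance recovers the original family's size bound. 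This is precisely the difficulty that forces the present paper, in its diameter analogue, to backtrack to the last down-shift $\mathcal{E}=S_j(\mathcal{G})$ and analyze that single step by hand in Cases~\ref{case2}--\ref{case10}; any proof along your lines must either do the analogous backtracking or dispense with shifting entirely (which is feasible here, since the $s$-union hypothesis already supplies the intersection structure at each level without normalizing $\mathcal{F}$).

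The second gap is quantitative and is where the theorem actually lives. Your case~(a) count assumes the gain above level $d$ is a single extra set $D'$, so that the loss $\binom{n-d-2}{d-1}$ at level $d$ nets out to the stated bound. But $\mathcal{F}_{d+1}$ need not have size $2$: for example $\mathcal{F}_{d+1}=\{[d]\cup\{j\}:j>d\}$ is $2$-intersecting of size $n-d$, is compatible with the $2d$-union condition, and forces only $\mathcal{F}_d\subseteq\{F:F\cap[d]\neq\emptyset\}$. For $d=2$ this configuration is exactly the excluded family $\mathcal{H}^*(n,4)$, and for $d=3$ it attains the claimed bound with \emph{equality}; so ``each deviation costs at least $\binom{n-d-2}{d-1}-1$'' is false as a per-set accounting, and the correct statement is a joint inequality of the form
\[
|\mathcal{F}_d|+|\mathcal{F}_{d+1}|\leq\binom{n}{d}-\binom{n-d-1}{d}-\binom{n-d-2}{d-1}+2
\]
for cross-intersecting pairs with $|\mathcal{F}_{d+1}|\geq 2$ (this is the content of Lemma~\ref{W231}, with Lemma~\ref{HK32} covering $d=2$), applied together with separate control of $\mathcal{F}_{\geq d+2}$ via cross-$2$-intersecting bounds such as Lemma~\ref{F16}. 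Your proposal names these ingredients as ``the main obstacle'' but does not supply them, and the one explicit count it does give is valid only in the subcase $|\mathcal{F}_{d+1}|=2$; without the joint inequalities the argument does not close, in either parity.
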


For $s=2$, we shall mention that a $2$-union family must be a subfamily of $\mathcal{K}(n, 2)$ or $\mathcal{H}(n, 2)$. For $s=3$, if a $3$-union family $\mathcal{F}$ is neither a subfamily of $\mathcal{K}(n, 3)$ nor of $\mathcal{H}(n, 3)$,
then $|\mathcal{F}|$ is maximized by
$\left\{\emptyset, \{a\}, \{b\}, \{c\}, \{a, b, c\}\right\}$. Moreover, the extremal families attaining the above bounds are  characterized in \cite{L24}. Here, we do not describe them for simplicity.

\subsection{Restricted diameter: Kleitman's theorem}

For convenience of readers, 
the following notations are consistent with the previous work \cite{F172}.
Let $A, B$ be two subsets of $[n]$. The \textit{symmetric difference} of $A$ and $B$ is defined as
$$A+B:=(A\backslash B)\cup (B\backslash A).$$
Sometimes, $ 2^{[n]}$ is considered as a metric space with the distance  $d(A,B)=|A+B|$ for every $A, B\subseteq [n]$. 
The \textit{diameter} of $\mathcal{F} \subseteq 2^{[n]}$, denoted by $\Delta(\mathcal{F})$, is defined as ${\rm max}\{d(A,B): A, B\in \mathcal{F}\}$.  
For $\mathcal{F} \subseteq 2^{[n]}$ and $S \subseteq [n]$,  the \textit{translation} of $\mathcal{F}$ by $S$ is defined as 
$
\mathcal{F}+S :=\{F+S: F \in \mathcal{F}\}.
$ 
Note that $|\mathcal{F}+S|=|\mathcal{F}|$ and $\Delta(\mathcal{F}+S)=\Delta(\mathcal{F})$.

\medskip 
In 1966, Kleitman \cite{K66} proved the following theorem  originally conjectured by Erd\H{o}s \cite{E61}.

\begin{theorem}[Kleitman \cite{K66}] \label{K66}  Suppose that $2 \leq s \leq n-2$ and $\mathcal{F} \subseteq 2^{[n]}$ satisfies $\Delta(\mathcal{F})\leq s$.
\begin{itemize}
\item[\rm (i)] If $s=2 d$ for  $d \geq 1$, then
$
|\mathcal{F}| \leq \sum_{i=0}^d \binom{n}{i}.
$

\item[\rm (ii)]  If $s=2 d+1$ for  $d \geq 1$, then
$
|\mathcal{F}| \leq \sum_{i=0}^d \binom{n}{i}+\binom{n-1}{d}.
$
\end{itemize}
\end{theorem}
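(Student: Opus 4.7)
The plan is to normalise by translation and then induct on $n$. For any $F_0\in\mathcal{F}$, the translate $\mathcal{F}+F_0$ preserves both size and diameter, so I may assume $\emptyset\in\mathcal{F}$; in particular every $F\in\mathcal{F}$ satisfies $|F|=|F+\emptyset|\le s$. The base case $n=s+2$ is handled by direct inspection of a Hamming ball of radius $\lfloor s/2\rfloor$, which is easily seen to saturate the bound.

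For the inductive step, split $\mathcal{F}$ on the element $n$: set
\[
\mathcal{F}_0 = \{F\in\mathcal{F}:n\notin F\},\qquad \mathcal{F}_1 = \{F\setminus\{n\}:F\in\mathcal{F},\ n\in F\},
\]
both subfamilies of $2^{[n-1]}$, with $|\mathcal{F}|=|\mathcal{F}_0|+|\mathcal{F}_1|$. Each $\mathcal{F}_i$ inherits diameter at most $s$ on $[n-1]$. The decisive cross-constraint is that for $A\in\mathcal{F}_0$ and $B\in\mathcal{F}_1$, the pair $A,\,B\cup\{n\}\in\mathcal{F}$ yields
\[
|A+B| \;=\; |A+(B\cup\{n\})|-1 \;\le\; s-1.
\]
In particular $\Delta(\mathcal{F}_0\cap\mathcal{F}_1)\le s-1$ on $[n-1]$, and the bipartite distance between $\mathcal{F}_0\setminus\mathcal{F}_1$ and $\mathcal{F}_1\setminus\mathcal{F}_0$ is also at most $s-1$.

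Writing $|\mathcal{F}|=|\mathcal{F}_0\cup\mathcal{F}_1|+|\mathcal{F}_0\cap\mathcal{F}_1|$ and invoking the inductive hypothesis would give $|\mathcal{F}|\le f(n-1,s)+f(n-1,s-1)$, where $f(n,s)$ denotes the right-hand side of Theorem \ref{K66}. A short Pascal-identity calculation reveals that this naive sum overshoots $f(n,s)$ by exactly $\binom{n-2}{\lfloor (s-1)/2\rfloor}$, so the straight induction is not sharp. Closing this gap is the main obstacle. The natural strategy is to sharpen the bound on $|\mathcal{F}_0\cup\mathcal{F}_1|$ by exploiting the stronger bipartite cross-distance noted above, which makes $\mathcal{F}_0\cup\mathcal{F}_1$ more rigid than an arbitrary diameter-$s$ family on $[n-1]$; alternatively one could reselect the pivot by an extremality or averaging argument over all $i\in[n]$ to minimise $|\mathcal{F}_0\cap\mathcal{F}_1|$ before applying induction.

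A parallel route I would pursue is via compression. The shift operators $S_{ij}$ preserve $|\mathcal{F}|$ and do not increase $\Delta(\mathcal{F})$, so one may replace $\mathcal{F}$ by its left-compressed image while retaining $\emptyset\in\mathcal{F}$. For a shifted family with $\emptyset\in\mathcal{F}$ and diameter at most $s$, stratifying by $|F|$ and iterating the diameter constraint forces membership inside (a shifted isomorph of) the Katona configuration $\mathcal{K}(n,s)$. The extra $\binom{n-1}{d}$ term in the odd case $s=2d+1$ arises precisely as the $(d+1)$-sets containing a distinguished element $y$, which may be appended to the radius-$d$ ball $\{|F|\le d\}$ without producing any pair of symmetric difference exceeding $2d+1$, matching the extremal family $\mathcal{K}(n,2d+1)$.
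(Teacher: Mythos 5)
Your proposal does not close the argument on either of the two routes you sketch, so there is a genuine gap. On the inductive route you candidly identify the problem yourself: splitting on the element $n$ and applying the inductive hypothesis to $\mathcal{F}_0\cup\mathcal{F}_1$ and $\mathcal{F}_0\cap\mathcal{F}_1$ overshoots the target by $\binom{n-2}{\lfloor (s-1)/2\rfloor}$, and you leave the recovery of this term as "the main obstacle" without resolving it. The cross-distance observation you record (distance at most $s-1$ between $\mathcal{F}_0$ and $\mathcal{F}_1$) is correct but is not by itself enough to make $\mathcal{F}_0\cup\mathcal{F}_1$ satisfy a bound strictly better than $f(n-1,s)$; turning that rigidity into the exact saving is precisely where the difficulty of the theorem lives, and no mechanism for it is supplied. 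On the compression route, the key claim is false as stated: a left-compressed family containing $\emptyset$ with $\Delta(\mathcal{F})\le s$ need not be contained in (a translate of) $\mathcal{K}(n,s)$. Already $\mathcal{H}(n,2)=\{\emptyset,\{1\},\{2\},\{1,2\}\}$ is shifted, contains $\emptyset$, has diameter $2$, and is not a subfamily of $\mathcal{K}(n,2)=\{F:|F|\le 1\}$; the families $\mathcal{H}(n,s)$ and the other near-extremal configurations in the paper give further counterexamples. The theorem asserts only a cardinality bound, not containment in the Katona configuration, and no stratification by $|F|$ will force such containment. You are also using the wrong operator: the $(i,j)$-shifts $S_{ij}$ do not produce a downward-closed family.

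The proof the paper points to (Kleitman's, recapped in Lemma \ref{K661} and used in Case \ref{case1}) is a reduction of a different kind: apply the down-shift (squashing) operator $S_j$, which replaces $F$ by $F\setminus\{j\}$ whenever the latter is not already present. This preserves $|\mathcal{F}|$ and does not increase $\Delta(\mathcal{F})$, and iterating over all $j$ terminates in a complex $\mathcal{C}$. The decisive point is that for a complex, $E\setminus F\in\mathcal{C}$ whenever $E\in\mathcal{C}$, so $|E\cup F|=d(E\setminus F,\,F)\le\Delta(\mathcal{C})\le s$; that is, a complex of diameter at most $s$ is automatically $s$-union, and Katona's Theorem \ref{K64} then gives the bound. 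If you want to salvage your write-up, replace the $S_{ij}$-compression paragraph with this down-shift reduction and drop the unproved containment claim.
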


Since
$|A+B|\le |A\cup B|$, the $s$-union assumption on $\mathcal{F}$ implies  $\Delta(\mathcal{F})\leq s$.  
Thus, Theorem \ref{K66} 
extends Theorem \ref{K64}. 
The combinatorial proof of Kleitman \cite{K66} transfers Theorem \ref{K66} into  Theorem \ref{K64} by using the so-called down-shift operation (see Subsection \ref{sec-2-1}). 
In 2020, Huang, Klurman and Pohoata \cite{H20} presented an elegant algebraic proof by the Cvetkovi\'{c} spectral bound on independence number, and they also showed several extensions and generalizations of Theorem \ref{K66} to other allowed distance sets consisting of consecutive integers. 
The Kleitman theorem  can be viewed as an isodiametric inequality for discrete hypercubes.  Many generalizations have been considered, such as the $n$-dimensional grid $[m]^n$ with Hamming distance \cite{A98, F80}, as well as
$[m]^n$ and the $n$-dimensional torus $\mathbb{Z}_m^n$ with Manhattan
distance \cite{A92, B93, D90}.

\medskip 
In 2017, Frankl \cite{F172} proved the following result, which is a diameter extension of Theorem \ref{F171}.

\begin{theorem}[Frankl \cite{F172}] \label{F172}
 Let $2 \leq s \leq n-2$ be an integer. Suppose that  $\mathcal{F} \subseteq 2^{[n]}$ satisfies $\Delta(\mathcal{F})\leq s$. Then the only family achieving the equality of Theorem \ref{K66} is the translation of
Katona's family $\mathcal{K}(n, s)$. Moreover, the following stability result holds.
\begin{itemize}
\item[\rm (i)]  If $s=2 d$ and $\mathcal{F}$ is not contained in any translation of $\mathcal{K}(n, 2d)$, then
\begin{align*}
|\mathcal{F}| \leq \sum_{0 \leq i \leq d}\binom{n}{i}-\binom{n-d-1}{d}+1.
\end{align*}

\item[\rm (ii)]  If $s=2 d+1$ and $\mathcal{F}$ is not contained in any translation of $\mathcal{K}(n, 2d+1)$, then
\begin{align*}
|\mathcal{F}| \leq \sum_{0 \leq i \leq d}\binom{n}{i}+\binom{n-1}{d}-\binom{n-d-2}{d}+1.
\end{align*}
\end{itemize}
\end{theorem}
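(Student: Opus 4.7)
My proof plan is to reduce Theorem~\ref{F172} to the union stability Theorem~\ref{F171} via a compression argument. Given $\mathcal{F}\subseteq 2^{[n]}$ with $\Delta(\mathcal{F})\le s$, I iteratively apply the down-shift $S_i$ for each $i\in[n]$: namely, replace every $F\in\mathcal{F}$ with $i\in F$ and $F\setminus\{i\}\notin\mathcal{F}$ by $F\setminus\{i\}$. Standard facts give $|S_i(\mathcal{F})|=|\mathcal{F}|$ and $\Delta(S_i(\mathcal{F}))\le\Delta(\mathcal{F})$, and the iteration terminates (since $\sum_{F\in\mathcal{F}}|F|$ strictly decreases at each non-trivial step) at a down-set $\mathcal{F}^*$ with $|\mathcal{F}^*|=|\mathcal{F}|$ and $\Delta(\mathcal{F}^*)\le s$.

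The crucial intermediate observation is that a down-set $\mathcal{F}^*$ with diameter at most $s$ is automatically $s$-union: for any $F,F'\in\mathcal{F}^*$, the subset $F'\setminus F\subseteq F'$ belongs to $\mathcal{F}^*$ by down-closure, and the disjointness $F\cap(F'\setminus F)=\emptyset$ gives
\[
|F\cup F'|=|F|+|F'\setminus F|=|F+(F'\setminus F)|\le\Delta(\mathcal{F}^*)\le s.
\]
Thus Theorem~\ref{F171} applies to $\mathcal{F}^*$, yielding either $\mathcal{F}^*\subseteq\mathcal{K}(n,s)$ or the second bound of Theorem~\ref{F171}. Since $|\mathcal{F}|=|\mathcal{F}^*|$, the same numerical bound transfers to $\mathcal{F}$ in the non-subset case, matching the stability bound claimed in Theorem~\ref{F172}. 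In the extremal case $|\mathcal{F}|=|\mathcal{K}(n,s)|$, we have $|\mathcal{F}^*|=|\mathcal{K}(n,s)|$, and the uniqueness in Theorem~\ref{K64} forces $\mathcal{F}^*=\mathcal{K}(n,s)$.

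The main obstacle is the reverse structural implication: if $\mathcal{F}^*\subseteq\mathcal{K}(n,s)$, show that $\mathcal{F}$ is contained in some translation $\mathcal{K}(n,s)+T$. Unlike translations, individual down-shifts do not preserve the isomorphism-up-to-translation class of the family, so one cannot simply invert the shifts. My approach would be to view $\mathcal{F}$ as obtained from $\mathcal{F}^*$ by a sequence of up-shifts (the reverse operations), and use the diameter constraint $\Delta(\mathcal{F})\le s$ to show that these up-shifts must be \emph{coherent}, in the sense of factoring through a single translation. Concretely, for each coordinate $i\in[n]$, the direction in which $\mathcal{F}$ differs from $\mathcal{F}^*$ (that is, whether sets tend to be larger or smaller in the $i$-th coordinate) is determined by a single bit, and these bits collectively define the translation $T$; diameter-preserving rigidity then forces $\mathcal{F}\subseteq\mathcal{K}(n,s)+T$. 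In the equality case this yields $\mathcal{F}=\mathcal{K}(n,s)+T$, completing the extremal characterization.
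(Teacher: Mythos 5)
Your reduction to the union case is the right first move and matches the standard approach (it is exactly Lemma \ref{K661}: down-shifts preserve size, do not increase diameter, and a complex of diameter at most $s$ is $s$-union). The fatal problem is the branch you yourself flag as ``the main obstacle'': when the terminal complex $\mathcal{F}^*$ lands inside $\mathcal{K}(n,s)$, you propose to invert the shifts and show $\mathcal{F}$ lies in a single translation of $\mathcal{K}(n,s)$. That implication is \emph{false}, so no ``coherence of up-shifts'' argument can establish it. Concretely, for $s=4$ take $j,a,b,c$ distinct and
$$\mathcal{G}=\bigl\{F:|F|\leq 1\bigr\}\cup\bigl\{\{j,x\}:x\neq j\bigr\}\cup\bigl\{\{a,x\}:x\notin\{j,a,b,c\}\bigr\}\cup\bigl\{\{j,a,b\},\{j,a,c\}\bigr\}.$$
One checks directly that $\Delta(\mathcal{G})\leq 4$ and that $S_j(\mathcal{G})\subseteq\{F:|F|\leq 2\}=\mathcal{K}(n,4)$ (so the terminal complex is also inside $\mathcal{K}(n,4)$), yet $\mathcal{G}$ is contained in no translation $\mathcal{K}(n,4)+T$: since $\emptyset$ and all singletons belong to $\mathcal{G}$, one is forced to $|T|\leq 1$, and each choice $T=\emptyset$, $T=\{j\}$, $T=\{a\}$, or $T=\{u\}$ with $u\notin\{j,a\}$ is ruled out by $\{j,a,b\}$, by $\{a,x\}$, by $\{j,x\}$, or by $\{j,a,b\}$ and $\{j,a,c\}$ respectively. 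So in this branch your argument yields neither the structural conclusion nor any bound on $|\mathcal{F}|$, and the proof collapses precisely where the theorem's content lies.

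The correct resolution (Frankl's, and the one this paper implements for the deeper stability in Cases \ref{case2} and \ref{case3}) is not structural but quantitative: one backtracks to the \emph{first} family $\mathcal{G}$ in the shift sequence with $S_j(\mathcal{G})\subseteq\mathcal{K}(n,s)$, observes that the top layers satisfy rigid conditions --- e.g.\ for $s=2d$ every $(d+1)$-set of $\mathcal{G}$ contains $j$, $\mathcal{G}_{d+1}(j)\cap\mathcal{G}_d(\bar{j})=\emptyset$, and $\mathcal{G}_{d+1}(j)$, $\mathcal{G}_d(\bar{j})$ are cross-intersecting --- and then bounds $|\mathcal{F}|=|\mathcal{G}|$ directly via Hilton--Milner-type inequalities for cross-intersecting families (Lemmas \ref{F16} and \ref{H672}), together with the normalization $|\mathcal{G}(i)|\leq|\mathcal{G}(\bar{i})|$ which gives $|\mathcal{G}|\leq 2|\mathcal{G}(\bar{j})|$ when the relevant layers are small. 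Without this counting step your proposal proves only the dichotomy ``either the bound holds or the terminal complex sits in $\mathcal{K}(n,s)$,'' which is strictly weaker than the theorem; the same gap also undermines your treatment of the equality case.
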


Theorem \ref{F172} is the first level stability of Kleitman's  theorem. Frankl's proof of Theorem \ref{F172} is combinatorial and it is mainly based on the use of his previous result of Theorem \ref{F171}. In 2023, Gao, Liu and Xu \cite{G23} gave a stability result for Kleitman's  theorem in  the case $s=2 d+1$ by a linear algebra method. 

\begin{theorem}[Gao, Liu and Xu \cite{G23}]
    Let $\mathcal{F} \subseteq 2^{[n]}$ be a family with $\Delta (\mathcal{F})\le 2d+1$. Then 
    \begin{enumerate}
        \item[\rm (i)] either $|\mathcal{F}| \le 2\sum_{i=0}^d {n \choose i} - 2{n-5d -1 \choose d}$, 

        \item[\rm (ii)] or $\mathcal{F}$ is contained in some translation of $\mathcal{K}(n,2d+2)$. Furthermore, in this case, 
        \begin{itemize}
            \item either $\mathcal{F}$ is contained in some translation of $\mathcal{K}(n,2d+1)$, 

            \item or $|\mathcal{F}|\le 2\sum_{i=0}^d {n-1 \choose i} - {n-d-2 \choose d} +1$. 
        \end{itemize}
    \end{enumerate}  
\end{theorem}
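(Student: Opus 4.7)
The plan is to combine a spectral independence-number bound on an appropriate Cayley graph with a combinatorial refinement based on the Hilton--Milner theorem. Identify $2^{[n]}$ with $\mathbb{F}_2^{n}$ and let $G$ denote the Cayley graph on $\mathbb{F}_2^{n}$ whose connection set is $\{v\in\mathbb{F}_2^{n}: |v|\ge 2d+2\}$, where $|v|$ denotes the Hamming weight. The condition $\Delta(\mathcal{F})\le 2d+1$ is equivalent to asserting that $\mathcal{F}$ is an independent set in $G$, so the task becomes: bound $\alpha(G)$-weighted independent sets, and then classify those that are near-extremal.

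First I would compute the spectrum of $G$. Since $G$ is Cayley on an abelian group, its eigenvalues are the Fourier sums $\lambda_T=\sum_{|v|\ge 2d+2}(-1)^{|T\cap v|}$ indexed by $T\subseteq [n]$, which take the form of Krawtchouk-type expressions; one then identifies $\lambda_{\min}$ and its eigenspace, the latter spanned by translates of the Hamming ball of radius $d+1$, reflecting the fact that the sharp Kleitman families are exactly such balls. Applying the Hoffman/Cvetkovi\'{c} ratio bound yields an initial numerical estimate on $|\mathcal{F}|$; the critical step is then the \emph{inverse} form of that bound, which says that whenever $|\mathcal{F}|$ surpasses the threshold in~(i), the normalized indicator $\mathbf{1}_{\mathcal{F}}$ is forced to lie close in $\ell^{2}$ to the span of the all-ones vector and the $\lambda_{\min}$-eigenspace. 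Unpacking this approximation produces a vector $S\in\mathbb{F}_2^{n}$ such that, after deleting a controlled number of ``defective'' elements, one has $\mathcal{F}+S\subseteq \mathcal{K}(n,2d+2)$. The quantity $2\binom{n-5d-1}{d}$ is precisely the error that emerges from reinserting those defective elements into the Kleitman count. If the repair fails at this stage, then alternative~(i) is already achieved; otherwise, translating by $S$ reduces us to $\mathcal{F}\subseteq \mathcal{K}(n,2d+2)$, which is alternative~(ii).

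Then I would carry out the combinatorial refinement inside $\mathcal{K}(n,2d+2)$. Write $\mathcal{F}=\mathcal{F}_{\le d}\cup \mathcal{F}_{d+1}$ according to whether $|F|\le d$ or $|F|=d+1$. For any $A,B\in\mathcal{F}_{d+1}$, the identity $|A+B|=2(d+1)-2|A\cap B|\le 2d+1$ forces $|A\cap B|\ge 1$, so $\mathcal{F}_{d+1}\subseteq\binom{[n]}{d+1}$ is an intersecting family. If $\mathcal{F}_{d+1}$ is EKR then it lies in a full star centered at some $y\in[n]$, whence $\mathcal{F}\subseteq \mathcal{K}(n,2d+1)$ with star center $y$, giving the first bullet of~(ii). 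If $\mathcal{F}_{d+1}$ is non-EKR, Theorem~\ref{H67} yields $|\mathcal{F}_{d+1}|\le \binom{n-1}{d}-\binom{n-d-2}{d}+1$; combined with $|\mathcal{F}_{\le d}|\le \sum_{i=0}^{d}\binom{n}{i}$ and the Pascal identity $\sum_{i=0}^{d}\binom{n}{i}+\binom{n-1}{d}=2\sum_{i=0}^{d}\binom{n-1}{i}$, this is exactly the bound $2\sum_{i=0}^{d}\binom{n-1}{i}-\binom{n-d-2}{d}+1$ claimed in the second bullet of~(ii).

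The principal obstacle is the spectral stability step in the second paragraph: converting a near-saturation of the Hoffman/Cvetkovi\'{c} bound into the concrete structural conclusion that, up to translation, $\mathcal{F}$ lies inside $\mathcal{K}(n,2d+2)$, and extracting the explicit error term $2\binom{n-5d-1}{d}$ via a quantitative inverse eigenvalue analysis. The factor $5d+1$ should arise from the combinatorial cost of repairing defective elements whose symmetric differences with the bulk of $\mathcal{F}$ must remain controlled. By comparison, the Erd\H{o}s--Ko--Rado/Hilton--Milner step and the binomial identity matching the target constant are routine, so the entire quantitative difficulty sits in making the spectral step both sharp and structural.
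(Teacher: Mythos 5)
This statement is quoted from Gao, Liu and Xu \cite{G23}; the present paper does not prove it, so your proposal can only be judged on its own merits. The second half of your argument --- the refinement inside a translation of $\mathcal{K}(n,2d+2)$ --- is correct and complete: $\mathcal{F}_{d+1}$ is intersecting because $|A+B|=2(d+1)-2|A\cap B|\le 2d+1$ forces $|A\cap B|\ge 1$, the EKR case gives containment in $\mathcal{K}(n,2d+1)$, the non-EKR case gives the Hilton--Milner bound $\binom{n-1}{d}-\binom{n-d-2}{d}+1$, and the identity $\sum_{i=0}^{d}\binom{n}{i}+\binom{n-1}{d}=2\sum_{i=0}^{d}\binom{n-1}{i}$ matches the stated constant. (You should still note the hypothesis $n\ge 2k+1=2d+3$ needed for Theorem \ref{H67}.)

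The genuine gap is the main dichotomy between (i) and (ii), which is the entire substance of the theorem, and your proposal does not prove it --- you explicitly defer the ``spectral stability step'' and the derivation of the error term $2\binom{n-5d-1}{d}$ as ``the principal obstacle.'' Worse, the mechanism you propose is unlikely to work as described. The sharp algebraic proof of Kleitman's theorem (Huang--Klurman--Pohoata, cited here as \cite{H20}) does not come from the Hoffman ratio bound on the unweighted Cayley graph; it comes from the Cvetkovi\'{c} \emph{inertia} bound applied to a carefully weighted pseudo-adjacency matrix, and there is no off-the-shelf ``inverse form'' of the inertia bound that converts near-extremality into structure --- supplying such a stability mechanism (via ``non-shadows'') is precisely the contribution of \cite{G23}. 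Your description of the $\lambda_{\min}$-eigenspace as ``spanned by translates of the Hamming ball of radius $d+1$'' is also not correct: eigenspaces of a Cayley graph on $\mathbb{F}_2^{n}$ are spanned by characters $\chi_T$ grouped by $|T|$, not by indicators of Hamming balls, so even if $\mathbf{1}_{\mathcal{F}}$ were shown to be $\ell^{2}$-close to the relevant Fourier levels, extracting the translation $S$ and the containment in $\mathcal{K}(n,2d+2)$ would require an additional structure theorem that you have not supplied. As it stands, the hard half of the theorem is asserted rather than proved.
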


Inspired by these stability results, Li and Wu \cite{L24} recently proposed a problem to characterize the full stability at the second level of Kleitman's theorem after showing Theorem \ref{L24}. 

\begin{problem}[Li and Wu \cite{L24}] \label{L24P}
Is there a diameter correspondence of Theorem \ref{L24}? 
In other words, is there a further  stability result for Theorem \ref{F172}? 
\end{problem}

The relations of aforementioned theorems can be described as below:

 \begin{figure}[H]
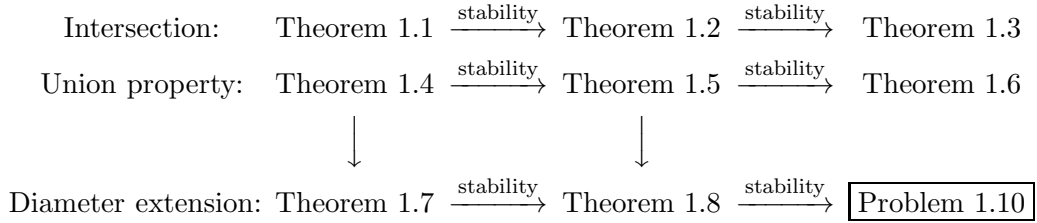

\[ \begin{CD} 
\text{Intersection:}  &
\text{Theorem \ref{E61}} @> 
\text{stability} >> 
\text{Theorem \ref{H67}} 
@>\text{stability}>>  \text{Theorem \ref{H17}} \\
\text{Union property:}  &
\text{Theorem \ref{K64}} @> 
\text{stability} >> 
\text{Theorem \ref{F171}} 
@>\text{stability}>>  \text{Theorem \ref{L24}} \\
  &  @VVV     @VVV \\
\text{Diameter extension: }  & \text{Theorem \ref{K66}}  @> \text{stability}>>   \text{Theorem \ref{F172}} 
@>\text{stability}>>  \boxed{\text{Problem \ref{L24P}} }
\end{CD} \] 
\caption{The framework of the stability results.}
\label{fig-relation}
\end{figure}

\section{Main results} 

This paper establishes an affirmative  answer to Problem \ref{L24P}. To achieve this, our primary challenge lies in characterizing the extremal configurations of Theorem \ref{F172},  which serves as a crucial stepping stone toward the complete solution.  
Prior to detailing our main theorems, we first introduce essential notations. 

\begin{itemize}
            \item Let $d\geq 2$ be an integer, $ R\in\binom{[n]}{d+2}$ and $y\in R$. Define
$
\mathcal{R}(n, 2 d)=\left\{F\subseteq [n]: |F|\leq d-2\right\}\cup\{R\} \cup\left\{F\subseteq [n] : y\in F, |F|=d-1 \text{ or } d\right\}\cup\left\{F\subseteq [n] : y\notin F, F \cap R \neq \emptyset, |F|=d-1 \text{ or } d\right\}.
$

            \item For any $y\in [3,n]$, define
$
\mathcal{R}^*(n, 4) = \{F\subseteq [n]: |F|\leq 1 \} \cup\{\{1,2\}, \{1,y\}, \{2,y\}\}\cup\{F \subseteq [n]: |F|=3, y\in F, F \cap[2] \neq \emptyset\} \cup \{\{1,2,i\}: i \in[3, n] \}$
and 
$
\mathcal{U}^*(n, 4) =\{ \emptyset,\{1\}, \{2\}, \{y\} \} \cup\{ \{y,i\},i \in [n]\backslash \{y\}\}\cup\{F \subseteq [n]: |F|=2, y\notin F, F \cap[2] \neq \emptyset\}\cup \{\{1,2,y\} \}\cup\{\{1,2,y, i\}: i \in [n]\backslash \{1,2,y\} \}.
$
        \end{itemize}

Consider a family $\mathcal{F} \subseteq 2^{[n]}$ with diameter $\Delta(\mathcal{F}) \leq s$. The structural properties of $\mathcal{F}$ exhibit fundamentally different behaviors across different ranges of $s$:

\begin{itemize}
            \item Trivial cases: For $s \in \{0,1\}$, the family $\mathcal{F}$ is necessarily contained in a translation of $\mathcal{K}(n,0)$ and $\mathcal{K}(n,1)$, respectively. For $s \in \{n-1,n\}$, we immediately obtain the cardinality bounds $|\mathcal{F}| \leq 2^{n-1}$ and $|\mathcal{F}| \leq 2^{n}$.

            \item Non-trivial cases: The case $s=2$ can be derived from Theorem \ref{F172}. For $s\in[3,n-2]$,
            the case becomes considerably more intricate
and presents substantial technical challenges.
These cases  require sophisticated combinatorial analysis to establish the desired characterization.  
        \end{itemize}

Recall that $\mathcal{K}(n, 2)=\left\{F\subseteq [n]: |F|\leq 1\right\}$ and $ \mathcal{H}(n, 2)=\left\{\emptyset, \{a\}, \{b\}, \{a, b\}\right\}$
for $a\neq b\in [n]$. Let $c\in [n]\backslash \{a,b\}$ and $\mathcal{V}(n, 2)=\left\{\{a\}, \{b\}, \{c\}, \{a, b, c\}\right\}.$ Our first result studies the families with diameter at most two.

\begin{theorem}\label{ma2}
Suppose that  $\mathcal{F} \subseteq 2^{[n]}$ satisfies $\Delta(\mathcal{F})\leq 2$. Then $\mathcal{F}$ must be contained in a translation of $\mathcal{K}(n, 2)$, $\mathcal{H}(n, 2)$ or $\mathcal{V}(n, 2)$.
\end{theorem}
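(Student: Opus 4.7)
The plan is to exploit translation invariance of the diameter. Since $|(A+S)+(B+S)|=|A+B|$, one has $\Delta(\mathcal{F}+S)=\Delta(\mathcal{F})$ and $|\mathcal{F}+S|=|\mathcal{F}|$; thus, if $\mathcal{F}$ is nonempty, after choosing any $F_0\in\mathcal{F}$ and replacing $\mathcal{F}$ by $\mathcal{F}+F_0$, I may assume $\emptyset\in\mathcal{F}$. The condition $|F|=|F+\emptyset|\le 2$ then forces every member of $\mathcal{F}$ to have cardinality at most two, so $\mathcal{F}\subseteq\{F\subseteq[n]:|F|\le 2\}$. Set $\mathcal{F}_1:=\mathcal{F}\cap\binom{[n]}{1}$ and $\mathcal{F}_2:=\mathcal{F}\cap\binom{[n]}{2}$.

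If $\mathcal{F}_2=\emptyset$, then $\mathcal{F}\subseteq\mathcal{K}(n,2)$ and we are done. Otherwise I fix some $\{a,b\}\in\mathcal{F}_2$. Computing symmetric differences with $\{a,b\}$ gives two immediate constraints: every singleton $\{x\}\in\mathcal{F}_1$ must satisfy $x\in\{a,b\}$ (otherwise $|\{x\}+\{a,b\}|=3$), and every other 2-set $\{c,d\}\in\mathcal{F}_2$ must meet $\{a,b\}$ (otherwise $|\{c,d\}+\{a,b\}|=4$). The subsequent argument then branches on whether all sets in $\mathcal{F}_2$ share a common element.

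If they do, say all contain $a$, then either $\mathcal{F}_2=\{\{a,b\}\}$ and $\mathcal{F}\subseteq\{\emptyset,\{a\},\{b\},\{a,b\}\}=\mathcal{H}(n,2)$, or some $\{a,c\}\in\mathcal{F}_2$ with $c\ne b$ exists; in the latter subcase $|\{b\}+\{a,c\}|=3$ rules out $\{b\}\in\mathcal{F}_1$, leaving $\mathcal{F}\subseteq\{\emptyset,\{a\}\}\cup\{\{a,x\}:x\in[n]\setminus\{a\}\}$, which becomes a subfamily of $\mathcal{K}(n,2)$ upon translating by $\{a\}$. If the members of $\mathcal{F}_2$ do not share a common element, the intersection-with-$\{a,b\}$ constraint yields some $\{a,c\},\{b,d\}\in\mathcal{F}_2$ with $c,d\in[n]\setminus\{a,b\}$, and the bound $|\{a,c\}+\{b,d\}|\le 2$ forces $c=d$; a short further elimination reduces $\mathcal{F}_2$ to the triangle $\{\{a,b\},\{a,c\},\{b,c\}\}$ and rules out every singleton (since $\{a,b\}\cap\{a,c\}\cap\{b,c\}=\emptyset$). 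Translating $\mathcal{F}$ by $\{a,b,c\}$ then identifies it as a subfamily of $\mathcal{V}(n,2)$.

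The main obstacle is the branch where the 2-sets have no common element: I must simultaneously pin down $\mathcal{F}_2$ to exactly the triangle and eliminate all singletons, before recognising the translation by $\{a,b,c\}$ that sends this configuration to $\mathcal{V}(n,2)$. The remaining branches reduce directly to $\mathcal{H}(n,2)$ or a translate of $\mathcal{K}(n,2)$ by inspection, so once the triangle case is settled the theorem follows.
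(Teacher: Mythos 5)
Your proof is correct. It follows the same basic template as the paper's argument --- translate by a member of $\mathcal{F}$ so that $\emptyset\in\mathcal{F}$, observe that all sets then have size at most $2$ and that $\mathcal{F}_2$ is intersecting, and split according to whether the $2$-sets form a star or the triangle $\{\{a,b\},\{a,c\},\{b,c\}\}$, recognising the triangle configuration as $\mathcal{V}(n,2)+\{a,b,c\}$ --- but there is one genuine difference. The paper first invokes Theorem \ref{F172} to conclude that any family with $|\mathcal{F}|\geq 5$ is already contained in a translation of $\mathcal{K}(n,2)$, and only then analyses the remaining families of size at most $4$, which caps $|\mathcal{F}_2|$ at $3$ and shortens the case analysis. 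You dispense with that appeal entirely: your branches handle families of arbitrary size directly (in particular, the star branch absorbs an arbitrarily large star $\{\{a,x\}:x\}$ together with $\emptyset$ and $\{a\}$ into a translate of $\mathcal{K}(n,2)$ by $\{a\}$, and the triangle branch pins $\mathcal{F}_2$ down to exactly three sets and kills all singletons via $\{a,b\}\cap\{a,c\}\cap\{b,c\}=\emptyset$). The result is a fully self-contained elementary proof of Theorem \ref{ma2}, at the cost of a slightly longer direct analysis; the paper's version is shorter but only because Theorem \ref{F172} is available as a black box. Both are valid.
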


The following theorem characterizes the extremal families of Theorem \ref{F172}.  

\begin{theorem}\label{ma3}
 Suppose that $3 \leq s \leq n-2$ and $\mathcal{F} \subseteq 2^{[n]}$ satisfies $\Delta(\mathcal{F})\leq s$. Furthermore, $\mathcal{F}$ is not contained in any translation of $\mathcal{K}(n, s)$.
Then the following three statements hold.

 \begin{itemize}
\item[\rm (i)] For $s=3$, the only family achieving equality of Theorem \ref{F172} is the translation of $\mathcal{H}(n, 3)$. 

\item[\rm (ii)] For $s=4$, the families achieving equality of Theorem \ref{F172} are the translations of $\mathcal{H}(n, 4)$, $\mathcal{R}(n, 4)$, $\mathcal{H}^*(n, 4)$, $\mathcal{R}^*(n, 4)$, $\mathcal{U}^*(n, 4)$.
For $s=2d$ ($d\geq 3$), the families achieving equality of Theorem \ref{F172} are the translations of $\mathcal{H}(n, 2d)$ and $\mathcal{R}(n, 2d)$. 

\item[\rm (iii)] For $s=5$, the families achieving equality of Theorem \ref{F172} are the translations of $\mathcal{H}(n, 5)$ and $\mathcal{T}(n, 5)$. For $s=2d+1$ ($d\geq 3$), the only family achieving equality in Theorem \ref{F172} is the translation of $\mathcal{H}(n, 2d+1)$. 
\end{itemize}
\end{theorem}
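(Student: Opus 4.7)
The plan is to use translation invariance to normalize $\mathcal{F}$ so that $\emptyset \in \mathcal{F}$, then run a rank-by-rank analysis anchored on a maximum element of $\mathcal{F}$, forcing the structure via the intersection inequality extracted from the diameter condition. Since $|\mathcal{F}|$ and $\Delta(\mathcal{F})$ are invariant under $\mathcal{F} \mapsto \mathcal{F} + A$, I first pick any $A \in \mathcal{F}$ and replace $\mathcal{F}$ by $\mathcal{F} + A$, so $\emptyset \in \mathcal{F}$ and $|F| \leq s$ for every $F \in \mathcal{F}$. Writing $\mathcal{F}_k := \mathcal{F} \cap \binom{[n]}{k}$ and $M := \max\{|F| : F \in \mathcal{F}\}$, the identity $|F+G| = |F|+|G| - 2|F \cap G|$ combined with $\Delta(\mathcal{F}) \leq s$ yields the intersection inequality
\begin{equation*}
|F \cap G| \;\geq\; \tfrac{1}{2}\bigl(|F| + |G| - s\bigr), \qquad F, G \in \mathcal{F},
\end{equation*}
which is the main workhorse throughout.

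Fix $R \in \mathcal{F}_M$. The inequality forces every $G \in \mathcal{F}_k$ with $k \geq s - M + 1$ to meet $R$ in at least $\lceil (k + M - s)/2 \rceil$ elements. I then expect the extremal value of $M$ at the Frankl threshold to lie in $\{d+1, d+2\}$ in both parities. To rule out $M \geq d + 3$, a direct count using the losses on $\mathcal{F}_{d-2}, \mathcal{F}_{d-1}, \mathcal{F}_d$ induced by the intersection constraints with $R$ amalgamates via Pascal's identity into a deficit of at least $(d+2)\binom{n-d-3}{d-1} - O(1)$ against the Frankl surplus $\binom{n-d-1}{d}-1$, which is strict for $n$ large. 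For $M = d+1$, a parallel counting pins $\mathcal{F}_{d+1}$ to a single set (any second $(d+1)$-set costs at least $\binom{n-d-2}{d-1}-1$ further elements of $\mathcal{F}_d$), and matching the Frankl bound then forces $\mathcal{F}_d = \{G : G \cap D \neq \emptyset\}$ with $\mathcal{F}_k = \binom{[n]}{k}$ for $k \leq d-1$; this is $\mathcal{H}(n, 2d)$ or $\mathcal{H}(n, 2d+1)$ after undoing the initial translation.

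The case $M = d+2$, $s = 2d$, $d \geq 3$ runs in parallel with a $(d+2)$-set $R$: the same deficit estimate forces $\mathcal{F}_{d+1} = \emptyset$ and $\mathcal{F}_{d+2} = \{R\}$, $\mathcal{F}_{d-1} \cup \mathcal{F}_d = \{G : |G| \in \{d-1, d\},\, G \cap R \neq \emptyset\}$, and $\mathcal{F}_k = \binom{[n]}{k}$ for $k \leq d - 2$, which recovers $\mathcal{R}(n, 2d)$. For odd $s = 2d+1$, $d \geq 3$, the analogous analysis at $M = d + 2$ yields a family with $\mathcal{F}_{d+1} = \{G : |G| = d+1, y \in G\}$ for some $y \in R$ and $\mathcal{F}_d$ restricted to sets meeting $R$; an explicit check (translate by $\{y\}$) shows this coincides with $\mathcal{H}(n, 2d+1) + \{y\}$, so no new orbit arises and only translations of $\mathcal{H}(n, 2d+1)$ survive. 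Part (i), the case $s = 3$, reduces after normalization to a short check on $|\mathcal{F}_2|$, giving $\mathcal{H}(n, 3)$ as the unique extremum up to translation.

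The main obstacle lies in the low-rank regime $s \in \{4, 5\}$, where $d = 2$ and the intersection inequality is too weak on the small strata $\mathcal{F}_1, \mathcal{F}_2$ to eliminate the star alternatives on $\mathcal{F}_3$. Here the pairwise constraint on $\mathcal{F}_3$ merely forces intersection $\geq 1$, which admits configurations much larger than a single triple, notably all triples through a fixed pair or all triples through a fixed point; this is precisely what produces the additional extremal families $\mathcal{H}^*(n, 4)$, $\mathcal{R}^*(n, 4)$, $\mathcal{U}^*(n, 4)$ for $s = 4$ and $\mathcal{T}(n, 5)$ for $s = 5$. My plan in this regime is an explicit enumeration by the combinatorial type of $\mathcal{F}_3$ (single triple, star through a point, or star through a pair) together with the possible content of $\mathcal{F}_4$ (empty, a single $4$-set, or an extension by $4$-sets sitting on top of a pair-star), followed by a forced reconstruction of $\mathcal{F}_1$ and $\mathcal{F}_2$ from the diameter constraint. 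I would finally verify that each listed family attains Frankl's bound with diameter $\leq s$, is not contained in any translation of $\mathcal{K}(n, s)$, and that representatives of the distinct families lie in distinct orbits under translation and permutation of $[n]$.
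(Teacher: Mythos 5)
Your normalization step breaks the argument at its foundation. Translating by an \emph{arbitrary} $A\in\mathcal{F}$ does give $\emptyset\in\mathcal{F}$ and hence $\max_{F\in\mathcal F}|F|\le s$, but it gives no control whatsoever on $M=\max\{|F|:F\in\mathcal F\}$ beyond that, and your plan hinges on the claim that extremal families satisfy $M\le d+2$ after this normalization, with a strict deficit below Frankl's bound whenever $M\ge d+3$. That claim is false. Take $\mathcal{F}=\mathcal{H}(n,2d)$ with $d\ge 3$, which attains equality in Theorem \ref{F172}, and translate by a $d$-set $F_0\in\mathcal{F}$ meeting $D$ in exactly one element: the resulting family contains $G+F_0$ for any $d$-set $G\in\mathcal F$ disjoint from $F_0$, and $|G+F_0|=2d\ge d+3$. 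So a genuinely extremal family, normalized exactly as you prescribe, has $M$ as large as $s$, and your proposed counting argument --- which must produce a deficit of at least $\binom{n-d-1}{d}-1$ to contradict extremality --- cannot succeed. The same phenomenon occurs for odd $s$. This is not a repairable detail within your framework: the whole point of the diameter problem, as opposed to the union problem, is that one cannot choose a single translate that simultaneously makes all sets small, and any rank-by-rank analysis anchored at a maximum element must first earn the right to assume $M$ is small.

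The paper circumvents precisely this obstacle with two tools you have omitted: the translation by $S=\{i:|\mathcal F(i)|>|\mathcal F(\bar i)|\}$, which normalizes degrees rather than forcing $\emptyset\in\mathcal F$, and Kleitman's down-shift operation $S_j$, which preserves $|\mathcal F|$, does not increase the diameter, and terminates in a complex that is genuinely $s$-union (Lemma \ref{K661}); the union-side results (Theorem \ref{L24}) and the cross-intersecting inequalities (Lemmas \ref{W231}, \ref{HK32}, \ref{Last}) are then applied either to the terminal complex or to the family $\mathcal G$ one step before the down-shift first lands inside a known extremal configuration. Note also that the paper does not prove Theorem \ref{ma3} by a direct structural reconstruction at the equality threshold: it deduces it immediately from the second-level stability bound of Theorem \ref{ma1}, since any family not contained in a translation of one of the listed configurations has size strictly below Frankl's bound. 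If you wish to pursue a direct characterization, you would at minimum need to replace your normalization by one that provably bounds $M$ (or incorporate the down-shift), and separately handle the fact that for odd $s$ the top layer $\mathcal F_{d+1}$ is only intersecting rather than $2$-intersecting, which is where $\mathcal{T}(n,5)$ and the Hilton--Milner-type alternatives enter.
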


We shall show the following stability result, which implies Theorem \ref{ma3} immediately.

\begin{theorem}[Main result] \label{ma1}
 Suppose that $3 \leq s \leq n-2$ and $\mathcal{F} \subseteq 2^{[n]}$ satisfies $\Delta(\mathcal{F})\leq s$ and $\mathcal{F}$ is not contained in any translation of $\mathcal{K}(n, s)$. 
Then the following stability result holds. 

 \begin{itemize}
\item[\rm (i)] For $s=3$, 
    if  $\mathcal{F}$ is not contained in any translation of $\mathcal{H}(n, 3)$, then  $|\mathcal{F}|\leq 8$. 

\item[\rm (ii)] For $s=2d$,
    if  $\mathcal{F}$ is not contained in any translation of $\mathcal{H}(n, 4)$, $\mathcal{R}(n, 4)$, $\mathcal{H}^*(n, 4)$, $\mathcal{R}^*(n, 4)$, $\mathcal{U}^*(n, 4)$ for $d=2$, and
$\mathcal{F}$ is not contained in any translation of $\mathcal{H}(n, 2d)$ and $\mathcal{R}(n, 2d)$ for $d\geq 3$, then
$$
|\mathcal{F}| \leq \sum_{0 \leq i \leq d}\binom{n}{i}-\binom{n-d-1}{d}-\binom{n-d-2}{d-1}+2.
$$

 \item[\rm (iii)] For $s=2d+1$,
if  $\mathcal{F}$ is not contained in any translation of $\mathcal{H}(n, 5)$ and $\mathcal{T}(n, 5)$ for $d=2$, and
$\mathcal{F}$ is not contained in any translation of $\mathcal{H}(n, 2d+1)$ for $d\geq 3$, then
\begin{align*}
|\mathcal{F}|\leq {\rm max} &\left\{\sum_{0 \leq i \leq d}\binom{n}{i}+\binom{n-1}{d}-\binom{n-d-2}{d}-\binom{n-d-3}{d-1}+2,\right.\\
&\left.\sum_{0 \leq i \leq d}\binom{n}{i}+\binom{n-1}{d}-\binom{n-d-2}{d}-\binom{n-d-3}{d}+1\right\}.
\end{align*}
\end{itemize}
\end{theorem}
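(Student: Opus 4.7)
The plan is to reduce the diameter-constrained problem to the $s$-union problem already resolved at the second stability level in Theorem~\ref{L24}, and then to pull back the extremal classification through the reduction to match the list of families in the statement.

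First, since $\Delta(\mathcal{F})\le s$ and every family appearing in the statement is invariant under translation, I choose an arbitrary $F_0\in\mathcal{F}$ and replace $\mathcal{F}$ by $\mathcal{F}+F_0$ so that $\emptyset\in\mathcal{F}$; the diameter hypothesis then forces $|F|\le s$ for every $F\in\mathcal{F}$, placing $\mathcal{F}$ inside the Hamming ball of radius $s$ about $\emptyset$. Next, using the down-shift machinery from Subsection~\ref{sec-2-1} --- the bridge by which Kleitman deduced Theorem~\ref{K66} from Theorem~\ref{K64} --- I iterate the down-shifts $D_i$ until the family stabilizes, obtaining a normal form $\widetilde{\mathcal{F}}$ with $|\widetilde{\mathcal{F}}|=|\mathcal{F}|$, $\emptyset\in\widetilde{\mathcal{F}}$, and $\Delta(\widetilde{\mathcal{F}})\le s$, to which the $s$-union stability result of Theorem~\ref{L24} may be applied. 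This step immediately yields the numerical bound in~(i), in~(ii), and the first expression inside the maximum of~(iii), provided that $\widetilde{\mathcal{F}}$ is not contained in $\mathcal{K}(n,s)$, $\mathcal{H}(n,s)$, $\mathcal{H}^*(n,4)$, or $\mathcal{T}(n,5)$.

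The heart of the argument is the contrapositive pull-back: if $\widetilde{\mathcal{F}}$ is contained in one of the $s$-union extremal families of Theorem~\ref{F171}, then $\mathcal{F}$ itself must be contained in a translation of one of the enlarged list $\mathcal{K}(n,s)$, $\mathcal{H}(n,s)$, $\mathcal{R}(n,s)$, $\mathcal{H}^*(n,4)$, $\mathcal{R}^*(n,4)$, $\mathcal{U}^*(n,4)$, $\mathcal{T}(n,5)$. I would verify this by inverting the sequence of down-shifts one step at a time, maintaining the invariant that the current partial inverse image is contained in a translation of one of the enumerated diameter-$s$ families. The new containers $\mathcal{R}(n,s)$, $\mathcal{R}^*(n,4)$, and $\mathcal{U}^*(n,4)$ emerge organically at this stage as the diameter-$s$ families whose downshifted translates are precisely $\mathcal{H}(n,s)$ or $\mathcal{H}^*(n,4)$; the family $\mathcal{U}^*(n,4)$ in particular appears only after a non-trivial translation is undone, which explains its somewhat unusual form.

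Finally, the second expression inside the maximum in case~(iii) corresponds to a configuration whose $(d+1)$-uniform layer is a non-trivial intersecting family governed by Hilton--Milner-type stability (Theorems~\ref{H67} and~\ref{H17}) rather than by pure $s$-union stability. I would handle it by a separate layerwise analysis, splitting $\mathcal{F}$ according to the cardinality of its members and applying Hilton--Milner stability to the top $(d+1)$-uniform layer, bounding the smaller layers by counting. The main obstacle lies in the bookkeeping in the pull-back step: enumerating the exact set of pre-images under downshift-plus-translation and verifying that each is a subfamily of one of the families in the statement requires a finite but intricate case analysis, considerably more delicate than the first-level stability proof of Theorem~\ref{F172}, because several extremal configurations that were indistinguishable at the first level now split apart.
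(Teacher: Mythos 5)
Your overall architecture (translate, down-shift, invoke Theorem~\ref{L24} when the shifted family avoids the union-extremal families) matches the paper's Case~\ref{case1}, but the step you call the ``heart of the argument'' --- the contrapositive pull-back --- contains a genuine gap: the claimed invariant is false. It is \emph{not} true that if $S_j(\mathcal{G})$ is contained in (a translation of) one of the $s$-union extremal families, then $\mathcal{G}$ is contained in a translation of one of the families listed in the statement. The paper's Cases~\ref{case2}--\ref{case10} are devoted precisely to pre-images $\mathcal{G}$ that are \emph{not} contained in any listed family even though $S_j(\mathcal{G})$ lands inside $\mathcal{K}(n,s)$, $\mathcal{H}(n,s)$, $\mathcal{T}(n,5)$, $\mathcal{R}(n,2d)$, $\mathcal{H}^*(n,4)$, $\mathcal{R}^*(n,4)$ or $\mathcal{U}^*(n,4)$; for these the paper proves the \emph{numerical bound directly} via cross-intersecting inequalities (Lemmas~\ref{H672}, \ref{W231}, \ref{HK32}, \ref{Last}) applied to the layers $\mathcal{G}_k(j)$, $\mathcal{G}_k(\bar j)$, rather than establishing containment. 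A concrete witness against your invariant is the family $\mathcal{Q}(n,2d+1)$ in the Remark after Lemma~\ref{c3.6}: its down-shift by $j$ is contained in $\mathcal{H}(n,2d+1)$, it is contained in no translation of any listed family, and for $n$ close to $2d+3$ its size, which equals the second expression in the maximum of part~(iii), strictly exceeds the first expression (the one Theorem~\ref{L24} would give). So if your pull-back worked you would prove a bound that this family violates --- the argument cannot be repaired as stated.

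Two further points. First, your attribution of the second term in the maximum of~(iii) to Hilton--Milner stability of the top $(d+1)$-uniform layer is misplaced: in the paper, Theorem~\ref{H17} applied to $\mathcal{G}_{d+1}$ (Subcase~3.1, $\max|G|=d+1$) yields the \emph{first} term, while the second term arises in Lemma~\ref{c3.6} from a configuration with $\max|G|=d+2$ and a single $(d{+}2)$-set off the center $y$, bounded by cross-intersecting estimates. Second, your normalization (translate by an arbitrary $F_0$ so that $\emptyset\in\mathcal{F}$) does not supply the degree condition $|\mathcal{F}(i)|\le|\mathcal{F}(\bar i)|$ for all $i$, which the paper obtains by translating by $S=\{i:|\mathcal{F}(i)|>|\mathcal{F}(\bar i)|\}$ and which is used repeatedly in the estimates of the form $|\mathcal{F}|\le 2|\mathcal{G}(\bar j)|$; without it several of the small-case bounds in the layerwise analysis are unavailable.
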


 Upon comparing the diameter extensions with their union counterparts, it becomes evident that the upper bound established in Theorem \ref{K66} precisely coincides with that of Theorem \ref{K64}. Furthermore, a parallel correspondence emerges between Theorems \ref{F172} and \ref{F171}, demonstrating an identical bounding behavior. However, a nuanced distinction arises when examining Theorems \ref{ma1} and \ref{L24}, where the former exhibits an additional case for the upper bound in odd case, thereby introducing a more refined classification

\section{Overview of the proofs of Theorems \ref{ma2} and \ref{ma1}}

Let  $\mathcal{F} \subseteq 2^{[n]}$ be a family and $1\leq i\neq j\leq n$. 
Fix the following standard notations for restricted subfamilies:
\begin{align*}
&\mathcal{F}(\bar{i})=\left\{F: i \notin F \in \mathcal{F}\right\},~\mathcal{F}(i)=\left\{F\backslash \{i\}: i \in F \in \mathcal{F}\right\},\\
&\mathcal{F}(i,j)=\left\{F\backslash \{i,j\}: i, j\in F \in \mathcal{F}\right\},~\mathcal{F}(\bar{i},\bar{j})=\left\{F: i, j\notin F \in \mathcal{F}\right\},\\
&\mathcal{F}(i,\bar{j})=\mathcal{F}(\bar{j},i)=\left\{F\backslash \{i\}: i \in F, j\notin F, F \in \mathcal{F}\right\}.
\end{align*}
Furthermore, we denote by $\mathcal{F}_k=\mathcal{F}\cap \binom{[n]}{k}=\{F: F\in \mathcal{F}, |F|=k\}$.

\subsection{Proof of Theorem \ref{ma2}}
First of all, let us recall that
$\mathcal{K}(n, 2)=\left\{F\subseteq [n]: |F|\leq 1\right\}$, 
$\mathcal{H}(n, 2)=\left\{\emptyset, \{a\}, \{b\}, \{a, b\}\right\}$ and $ \mathcal{V}(n, 2)=\left\{\{a\}, \{b\}, \{c\}, \{a, b, c\}\right\}$. 
Suppose that  $\mathcal{F} \subseteq 2^{[n]}$ satisfies $\Delta(\mathcal{F})\leq 2$. Note that $n\geq 4$. By Theorem \ref{F172}, if $|\mathcal{F}|\geq 5$, then $\mathcal{F}$ must be contained in a translation of $\mathcal{K}(n, 2)$. So we may assume that $|\mathcal{F}|\leq 4$. In addition,  since $A+B=(A+C)+(B+C)$, we may assume that $\emptyset\in \mathcal{F}$. Then ${\rm max}\{|F|: F\in \mathcal{F}\}\leq 2$ and $|\mathcal{F}_2|\leq 3$. In this case, if $|\mathcal{F}_2|\leq 1$, then
$\mathcal{F} \subseteq \mathcal{K}(n, 2)$ or $\mathcal{F} \subseteq \mathcal{H}(n, 2)$. If $|\mathcal{F}_2|= 2$, note that $\mathcal{F}_2 $
is intersecting, assume that $\mathcal{F}_2=\{\{a,b\}, \{a,c\}\} $. Then $\mathcal{F}_1\subseteq\{\{a\}\}$ since $\Delta(\mathcal{F})\leq 2$. This leads to
$$\mathcal{F}\subseteq \left\{\emptyset, \{a\},\{a, b\}, \{a, c\}\right\}=\left\{\emptyset, \{a\},\{b\}, \{c\}\right\}+\{a\}\subseteq\mathcal{K}(n, 2)+\{a\}.$$
If $|\mathcal{F}_2|= 3$, then we may assume that $\mathcal{F}_2=\{\{a,b\}, \{a,c\}, \{a,d\}\} $ or $\mathcal{F}_2=\{\{a,b\}, \{a,c\}, \{b,c\}\} $. It follows that
$$\mathcal{F}= \left\{\emptyset, \{a,b\},\{a, c\}, \{a, d\}\right\}=\left\{\{a\},\{b\}, \{c\}, \{d\}\right\}+\{a\}\subseteq\mathcal{K}(n, 2)+\{a\},$$
or
$\mathcal{F}= \left\{\emptyset, \{a,b\},\{a, c\}, \{b, c\}\right\}=\left\{\{a\},\{b\}, \{c\}, \{a,b,c\}\right\}+\{a, b, c\}=\mathcal{V}(n, 2)+\{a, b, c\}.$ This completes the proof of  Theorem \ref{ma2}.

\subsection{Overview of the proof of  Theorem \ref{ma1}} \label{sec-2-1}
The proof of Theorem \ref{ma1} employs the down-shift operation combined with new inequalities for cross-intersecting families, which may hold independent combinatorial interest. Our approach draws inspiration from the work of Frankl \cite{F172}, but requires significantly deeper structural analysis, presenting nontrivial technical challenges. Within this framework, we further establish higher-layer stability results for the Kleitman theorem. To ensure clarity, we first develop the necessary tools and outline our methodological framework in subsequent subsections, deferring the full proof of Theorem \ref{ma1} to Section \ref{se3}.

\subsubsection{Down-shift operator}

Let  $\mathcal{F} \subseteq 2^{[n]}$ be a family and $j\in [n]$. The \textit{down-shift operation} $S_{j}$, also known as the squashing operation, discovered by Kleitman \cite{K66}, is defined as follows:
$$S_{j}(\mathcal{F})=\left\{S_{j}(F): F \in \mathcal{F}\right\},$$
where
$$
S_{j}(F)= \begin{cases}
F \backslash\{j\}
& \text {if } j \in F \text { and } F \backslash\{j\}\notin \mathcal{F}, \\
F & \text {otherwise. }\end{cases}
$$

Observe that $S_y(\mathcal{R}(n, 2d))=\mathcal{H}(n, 2d)$, $S_y(\mathcal{R}^*(n, 4))=\mathcal{H}^*(n, 4)$, $S_y(\mathcal{U}^*(n, 4))=\mathcal{H}^*(n, 4)$ and $S_c(\mathcal{V}(n, 2))=\mathcal{H}(n, 2)$.
A family $\mathcal{F}\subseteq 2^{[n]}$ is called a \textit{complex} if $E\subseteq F\in \mathcal{F}$ implies that  $E\in \mathcal{F}$. If we start with a  family $\mathcal{F}\subseteq 2^{[n]}$ satisfying $\Delta(\mathcal{F})\leq s$, by repeatedly applying the down-shift operations $S_j$ to $\mathcal{F}$ for each $j \in [n]$, then we end up with a complex family.

The following lemma plays a crucial role in our subsequent proofs.

\begin{lemma}[Kleitman \cite{K66}; Frankl \cite{F172}] \label{K661}
Let  $\mathcal{F} \subseteq 2^{[n]}$ be a family and $j\in [n]$. Then
$|S_{j}(\mathcal{F})|=|\mathcal{F}|$ and $\Delta\left(S_{j}(\mathcal{F})\right)\leq \Delta(\mathcal{F})$. Moreover, if $\mathcal{F}$ is a complex, then $|E\cup F|\leq|\Delta(\mathcal{F})|$  for all $E, F\in \mathcal{F}$.
\end{lemma}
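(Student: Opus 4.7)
The lemma collects three assertions, and I will prove each by a direct analysis of how the operator $S_j$ interacts with the symmetric-difference operation. For the cardinality identity $|S_j(\mathcal{F})|=|\mathcal{F}|$, the plan is to check that $S_j$ acts as an injection on $\mathcal{F}$. Suppose $S_j(F_1)=S_j(F_2)$ with $F_1\ne F_2$; after a small case split on whether $S_j$ fixes each of $F_1,F_2$, the only nontrivial possibility is that, say, $F_1$ is fixed while $F_2$ is shifted to $F_2\setminus\{j\}$. Then $F_1=F_2\setminus\{j\}$ lies in $\mathcal{F}$, contradicting the definition of the shift, which required $F_2\setminus\{j\}\notin\mathcal{F}$.

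For the diameter bound, I will fix $E,F\in\mathcal{F}$ and compare $|S_j(E)+S_j(F)|$ with $\Delta(\mathcal{F})$ by another short case split on the shifting behavior. If neither $E$ nor $F$ is shifted the symmetric difference is preserved, and if both are shifted then $j\in E\cap F$, so removing $j$ from both does not alter $E+F$. If exactly one set is shifted, say $S_j(E)=E\setminus\{j\}$ and $S_j(F)=F$, and $j\notin F$, then $|S_j(E)+S_j(F)|=|E+F|-1$. The main obstacle is the remaining subcase, in which $S_j(E)=E\setminus\{j\}$ but $F$ contains $j$ and is not shifted; here $|S_j(E)+S_j(F)|$ in fact \emph{increases} to $|E+F|+1$, since $j$ now lies in the shifted symmetric difference. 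The remedy, and the key observation, is that the definition of $S_j$ forces $F\setminus\{j\}\in\mathcal{F}$ in this situation; then the pair $E,\,F\setminus\{j\}\in\mathcal{F}$ has symmetric difference of exactly the size $|E+F|+1$, so the inflated distance is nevertheless bounded by $\Delta(\mathcal{F})$.

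For the final inequality, I will use that a complex $\mathcal{F}$ is closed under taking subsets, so that $E\setminus F\subseteq E\in\mathcal{F}$ implies $E\setminus F\in\mathcal{F}$. Since $E\setminus F$ and $F$ are disjoint, their symmetric difference equals their union, which in turn equals $E\cup F$; therefore $|E\cup F|=|(E\setminus F)+F|\leq \Delta(\mathcal{F})$. This last step is immediate once the substitution $E\mapsto E\setminus F$ is made, so no new difficulty arises beyond the delicate shifting case described above.
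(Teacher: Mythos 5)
Your proof is correct: the injectivity argument for $|S_j(\mathcal{F})|=|\mathcal{F}|$, the case analysis on the shifting behaviour (including the key subcase where $j\in E\cap F$ but only $E$ is shifted, resolved by comparing with the pair $E,\,F\setminus\{j\}\in\mathcal{F}$), and the substitution $E\mapsto E\setminus F$ for the complex case are exactly the standard arguments of Kleitman and Frankl. The paper states this lemma with a citation and gives no proof of its own, so there is nothing to compare beyond noting that your write-up faithfully reconstructs the classical argument.
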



\subsubsection{Cross-intersecting families}

For two families $\mathcal{F}, \mathcal{G}\subseteq 2^{[n]}$, $\mathcal{F}$ and $\mathcal{G}$ are  called \textit{cross-intersecting} if $|F\cap G|\geq 1$
 for all $F\in \mathcal{F}$ and $G\in \mathcal{G}$. 
 
In 2016, Frankl \cite{F16} gave a new inequality for cross-intersecting families to obtain Theorem \ref{F171}.

\begin{lemma}[See \cite{F16}] \label{F16}
Let $t$ be an non-negative integer. Let $k \geq 1$ and $n \geq 2 k+t$ be positive integers.  Let $\mathcal{F} \subseteq\binom{[n]}{k+t}$ and $\mathcal{G} \subseteq\binom{[n]}{k}$ be  cross-intersecting families. If $\mathcal{F}$ is $(t+1)$-intersecting and $|\mathcal{F}| \geq 1$, then
$$
|\mathcal{F}|+|\mathcal{G}| \leq\binom{n}{k}-\binom{n-k-t}{k}+1.
$$
\end{lemma}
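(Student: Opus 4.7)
The plan is to prove this cross-intersecting inequality via the standard left-shifting (compression) technique, combined with a structural analysis of shifted $(t+1)$-intersecting families. First, I would apply the simultaneous shift operations $S_{ij}$ (for $i<j$) to both $\mathcal{F}$ and $\mathcal{G}$ until both are left-compressed on $[n]$; standard verifications show that such shifting preserves $|\mathcal{F}|$, $|\mathcal{G}|$, the cross-intersecting property between them, and the $(t+1)$-intersecting property of $\mathcal{F}$. So we may assume both families are shifted.

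After shifting, two structural facts emerge. Since $\mathcal{F}$ is nonempty and shifted, iteratively down-shifting any member reaches $[k+t]$, so $H := [k+t] \in \mathcal{F}$. Since $\mathcal{F}$ is $(t+1)$-intersecting and $H \in \mathcal{F}$, every $F \in \mathcal{F}$ satisfies $|F \cap H| \geq t+1$, equivalently $|H \setminus F| \leq k-1$. Furthermore, cross-intersection at $H$ forces every $G \in \mathcal{G}$ to meet $H$, giving the preliminary bound $|\mathcal{G}| \leq \binom{n}{k} - \binom{n-k-t}{k}$. Combined with $|\mathcal{F}| \geq 1$, this already settles the case $|\mathcal{F}| = 1$.

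For $|\mathcal{F}| \geq 2$, the task is to save an additional $|\mathcal{F}|-1$ forbidden $k$-sets from $\mathcal{G}$. I would construct an injection $\Phi : \mathcal{F} \setminus \{H\} \hookrightarrow \binom{[n]}{k} \setminus \mathcal{G}$ whose image consists of $k$-sets meeting $H$ but disjoint from the preimage $F$ (hence forbidden from $\mathcal{G}$ by cross-intersection). A natural candidate is $\Phi(F) = (H \setminus F) \cup S_F$, where $S_F \subseteq [k+t+1, n] \setminus F$ is a canonical set of size $k - |H \setminus F| \geq 1$, chosen so that distinct $F$'s produce distinct images. The shifted structure of $\mathcal{F}$ is essential for injectivity: if $\Phi(F_1) = \Phi(F_2)$, then $F_1 \cap H = F_2 \cap H$, and closure of $\mathcal{F}$ under down-shifts constrains the high-index part of each $F$. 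This would yield $|\mathcal{G}| \leq \binom{n}{k} - \binom{n-k-t}{k} - (|\mathcal{F}| - 1)$, and hence the claimed bound.

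The main obstacle is establishing the injectivity of $\Phi$ in full generality, since naive canonical choices (for instance, the lex-smallest available $S_F$) typically collapse distinct $F$'s sharing the same $H$-trace because of freedom in $F$'s elements far above $k+t$. If the direct injection route proves too delicate, a backup plan is to induct on $|\mathcal{F}|$: remove a shift-maximal $F^* \in \mathcal{F}$ to obtain $\mathcal{F}' := \mathcal{F} \setminus \{F^*\}$, which remains shifted and $(t+1)$-intersecting, and produce a single $k$-set $G_{F^*} \in \binom{[n]}{k} \setminus \mathcal{G}$ that is cross-intersecting with $\mathcal{F}'$ but disjoint from $F^*$. Setting $\mathcal{G}' := \mathcal{G} \cup \{G_{F^*}\}$ yields a cross-intersecting pair $(\mathcal{F}', \mathcal{G}')$ with the same combined size as $(\mathcal{F}, \mathcal{G})$, so the induction hypothesis closes the argument. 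Constructing the swap set $G_{F^*}$ is the core technical difficulty, again hinging on the shifted structure around $F^*$.
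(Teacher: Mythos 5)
The paper does not prove this lemma at all: it is imported verbatim from Frankl's 2016 paper \cite{F16} and used as a black box, so there is no in-paper argument to compare against. Judged on its own merits, your proposal has a genuine gap, and in fact your primary route cannot be repaired in the form you describe.

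The setup is fine: shifting preserves all the hypotheses, a shifted nonempty $\mathcal{F}$ contains $H=[k+t]$, every $F\in\mathcal{F}$ meets $H$ in at least $t+1$ elements, and every $G\in\mathcal{G}$ meets $H$, which gives $|\mathcal{G}|\le\binom{n}{k}-\binom{n-k-t}{k}$ and settles $|\mathcal{F}|=1$. The problem is the injection $\Phi(F)=(H\setminus F)\cup S_F$ with $S_F\subseteq[k+t+1,n]$. All sets $F$ sharing the same trace $T=F\cap H$ are forced to have images with the same intersection $H\setminus T$ with $H$, differing only in the choice of $S_F\subseteq[k+t+1,n]$ of size $|T|-t$. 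When $|T|=t+1$ there are at most $n-k-t$ such images, while $\mathcal{F}$ can contain $\binom{n-k-t}{k-1}$ members with that trace: take $\mathcal{F}=\{F\in\binom{[n]}{k+t}:[t+1]\subseteq F\}$, which is shifted, $(t+1)$-intersecting, and contains $H$. For $k\ge 3$ and $n-k-t\ge 4$ we have $\binom{n-k-t}{k-1}>n-k-t$, so no injection of the prescribed shape exists, however cleverly $S_F$ is chosen. This is not a delicate technicality but a pigeonhole obstruction, so the "main obstacle" you flag is fatal to that plan. Your backup plan (peel off one member of $\mathcal{F}$ at a time and insert a swap set into $\mathcal{G}$, equivalently show that the union $\bigcup_{F\in\mathcal{F}}\binom{[n]\setminus F}{k}$ of forbidden sets grows by at least one at each step) is indeed the standard way such results are proved, but you explicitly leave its core construction — producing a $k$-set disjoint from the removed $F^*$ yet meeting all remaining members — unexecuted, and that step is exactly where the $(t+1)$-intersecting hypothesis and the bound $n\ge 2k+t$ must be used. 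As it stands the proposal is an outline whose only complete branch is the trivial case $|\mathcal{F}|=1$.
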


 Lemma \ref{F16} implies the following classical result due to Hilton and Milner.

\begin{lemma}[See \cite{H67}] \label{H672}
Let $k \geq 1$ and $n \geq 2 k$ be positive integers.  Let $\mathcal{F} \subseteq\binom{[n]}{k}$ and $\mathcal{G} \subseteq\binom{[n]}{k}$ be  non-empty cross-intersecting families. Then
$$
|\mathcal{F}|+|\mathcal{G}| \leq\binom{n}{k}-\binom{n-k}{k}+1.
$$
Moreover, if $n\ge 2k+1$ and  $\mathcal{F}\cap \mathcal{G}=\emptyset $, then the above inequality holds strictly. 
\end{lemma}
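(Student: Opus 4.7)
The plan is to reduce Lemma~\ref{H672} to Lemma~\ref{F16} via the elementary set-theoretic identity
$$|\mathcal{F}|+|\mathcal{G}|=|\mathcal{F}\cap\mathcal{G}|+|\mathcal{F}\cup\mathcal{G}|.$$
The first step is a one-line check that, for cross-intersecting $\mathcal{F},\mathcal{G}\subseteq\binom{[n]}{k}$, the derived pair $(\mathcal{F}\cap\mathcal{G},\,\mathcal{F}\cup\mathcal{G})$ is itself cross-intersecting, and moreover $\mathcal{F}\cap\mathcal{G}$ is intersecting: for any $A,A'\in\mathcal{F}\cap\mathcal{G}$ one may take $A\in\mathcal{F}$, $A'\in\mathcal{G}$, so $A\cap A'\neq\emptyset$ by the hypothesis; and analogously for an arbitrary $B\in\mathcal{F}\cup\mathcal{G}$ against a member of $\mathcal{F}\cap\mathcal{G}$.

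When $\mathcal{F}\cap\mathcal{G}\neq\emptyset$, the family $\mathcal{F}\cap\mathcal{G}$ is non-empty and intersecting, so Lemma~\ref{F16} with $t=0$ applied to the derived pair immediately yields
$$|\mathcal{F}|+|\mathcal{G}|=|\mathcal{F}\cap\mathcal{G}|+|\mathcal{F}\cup\mathcal{G}|\leq\binom{n}{k}-\binom{n-k}{k}+1,$$
settling the first inequality in this case. When $\mathcal{F}\cap\mathcal{G}=\emptyset$, the identity collapses to $|\mathcal{F}|+|\mathcal{G}|=|\mathcal{F}\cup\mathcal{G}|$, and I would argue directly by sub-casing on whether $\mathcal{F}\cup\mathcal{G}$ is intersecting as a family. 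If it is, Theorem~\ref{E61} gives $|\mathcal{F}\cup\mathcal{G}|\leq\binom{n-1}{k-1}$, which is strictly below $\binom{n}{k}-\binom{n-k}{k}+1$ for every $k\geq 2$. If it is not, then some disjoint pair lies in $\mathcal{F}\cup\mathcal{G}$; because cross-intersecting forbids such a pair from straddling $\mathcal{F}$ and $\mathcal{G}$, the pair lies inside one family, say $F_1,F_2\in\mathcal{F}$. Then every $G\in\mathcal{G}$ meets both $F_1$ and $F_2$, confining $\mathcal{G}$ to the $2$-cover family of $F_1\cup F_2$ of size at most $\binom{n}{k}-2\binom{n-k}{k}+\binom{n-2k}{k}$. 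A complementary bound---for instance, Lemma~\ref{F16} applied to the singleton pair $(\{F_0\},\mathcal{G})$ for any fixed $F_0\in\mathcal{F}$, giving $|\mathcal{G}|\leq\binom{n}{k}-\binom{n-k}{k}$---combined with the $2$-cover restriction then closes the bound on $|\mathcal{F}|+|\mathcal{G}|$.

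The strict inequality under $n\geq 2k+1$ and $\mathcal{F}\cap\mathcal{G}=\emptyset$ drops out of the sub-case analysis: the Erd\H{o}s--Ko--Rado sub-case is strict for $k\geq 2$, and the $2$-cover sub-case is strict because the slack term $\binom{n-2k}{k}$ drops strictly below the ``equality value'' once $n\geq 2k+1$. The $k=1$ regime is vacuous, since cross-intersecting singleton families must agree on a common element, contradicting the disjointness. The main technical obstacle I anticipate is the second sub-case of $\mathcal{F}\cap\mathcal{G}=\emptyset$ when both $\mathcal{F}$ and $\mathcal{G}$ simultaneously contain disjoint pairs: a one-sided $2$-cover bound is too crude, and one must balance the trade-off---a large $\mathcal{G}$ forces $\mathcal{F}$ small through cross-intersection against many $2$-covers. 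A compact way to execute this is to apply Lemma~\ref{F16} separately to $(\{F_0\},\mathcal{G})$ and to $(\{G_0\},\mathcal{F})$ for judicious choices of $F_0\in\mathcal{F}$, $G_0\in\mathcal{G}$, and then patch the two bounds via the disjointness hypothesis.
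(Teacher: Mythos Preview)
Your reduction via $|\mathcal{F}|+|\mathcal{G}|=|\mathcal{F}\cap\mathcal{G}|+|\mathcal{F}\cup\mathcal{G}|$ is exactly the right idea and is presumably what the paper has in mind when it says ``Lemma~\ref{F16} implies'' Lemma~\ref{H672}: whenever $\mathcal{F}\cap\mathcal{G}\neq\emptyset$, the pair $(\mathcal{F}\cap\mathcal{G},\mathcal{F}\cup\mathcal{G})$ satisfies the hypotheses of Lemma~\ref{F16} with $t=0$ and you are done in one line. The EKR sub-case when $\mathcal{F}\cap\mathcal{G}=\emptyset$ and $\mathcal{F}\cup\mathcal{G}$ is intersecting is also fine.

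The genuine gap is your remaining sub-case: $\mathcal{F}\cap\mathcal{G}=\emptyset$ and $\mathcal{F}\cup\mathcal{G}$ not intersecting, in particular when \emph{both} $\mathcal{F}$ and $\mathcal{G}$ contain a disjoint pair. Your proposed patch does not close the bound. Applying Lemma~\ref{F16} to $(\{F_0\},\mathcal{G})$ and to $(\{G_0\},\mathcal{F})$ only yields $|\mathcal{F}|,|\mathcal{G}|\le\binom{n}{k}-\binom{n-k}{k}$, which sums to $2\bigl(\binom{n}{k}-\binom{n-k}{k}\bigr)$, far above the target; the disjointness $\mathcal{F}\cap\mathcal{G}=\emptyset$ contributes nothing here. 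The two-cover bound fares no better: if each family is confined to sets meeting a fixed disjoint pair from the other, you get $|\mathcal{F}|+|\mathcal{G}|\le 2\bigl(\binom{n}{k}-2\binom{n-k}{k}+\binom{n-2k}{k}\bigr)$, and the inequality $\binom{n}{k}-3\binom{n-k}{k}+2\binom{n-2k}{k}\le 1$ already fails at $n=2k+1$, $k=3$ (the left side equals $23$). So neither of your suggested bounds, nor any naive combination of them, suffices.

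The cleanest repair is to bypass this sub-case entirely using Lemma~\ref{Ln}: replace $\mathcal{F},\mathcal{G}$ by the lex initial segments $\mathcal{L}(n,k,|\mathcal{F}|)$ and $\mathcal{L}(n,k,|\mathcal{G}|)$, which are still cross-intersecting and, being nested, have non-empty intersection. Your $\cap/\cup$ trick then gives the main inequality immediately. For the ``Moreover'' clause you can argue as follows: if only one family, say $\mathcal{F}$, contains a disjoint pair then $\mathcal{G}$ is intersecting and Lemma~\ref{F16} applied to $(\mathcal{G},\mathcal{F})$ gives the bound, with equality forcing $G_0\in\mathcal{F}$ for the unique $G_0\in\mathcal{G}$, contradicting $\mathcal{F}\cap\mathcal{G}=\emptyset$; the sub-case where both contain disjoint pairs can then be finished, for instance, by the lex replacement together with a short check that the smaller lex segment must sit inside a star, reducing again to Lemma~\ref{F16} with a strict loss coming from $\mathcal{F}\cap\mathcal{G}=\emptyset$.
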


The next lemmas are sharpenings of Lemma \ref{H672}, and they are used by the present authors \cite{W240} to establish a unified framework for the stabilities of 
Erd\H{o}s-Ko-Rado's theorem. 

\begin{lemma}[See \cite{W240}] \label{W231}
Let $k \geq 3$ and $n \geq 2 k$ be positive integers.  Let $\mathcal{F} \subseteq\binom{[n]}{k}$ and $\mathcal{G} \subseteq\binom{[n]}{k}$ be  cross-intersecting families. Suppose that $|\mathcal{F}| \geq 2$ and $|\mathcal{G}| \geq 2$. Then
$$
|\mathcal{F}|+|\mathcal{G}| \leq\binom{n}{k}-\binom{n-k}{k}-\binom{n-k-1}{k-1}+2.
$$
Moreover, if $n\ge 2k+1$ and $|\mathcal{F}\cap \mathcal{G}|\le 1$, then the above inequality holds strictly. 
\end{lemma}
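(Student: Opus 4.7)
The plan is to combine the standard left-shifting reduction with a structural case analysis on $\mathcal{F}$, driven by a counting identity for the number of $k$-sets meeting two fixed $k$-sets. The key point is that the hypothesis $|\mathcal{F}| \ge 2$ allows us to pick two members of $\mathcal{F}$ whose joint covering accounts for the extra subtractive term $\binom{n-k-1}{k-1}$ beyond the Hilton--Milner-type bound of Lemma \ref{H672}.

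First, I would apply the left-shift operators $L_{ij}$ ($1 \le i < j \le n$) to both $\mathcal{F}$ and $\mathcal{G}$; since shifting preserves cardinalities and the cross-intersecting property, the non-strict bound may be proved under the additional assumption that both families are left-compressed. The key counting identity is that for any two distinct $F_1, F_2 \in \binom{[n]}{k}$,
$$
\bigl|\{G \in \binom{[n]}{k}: G \cap F_1 \neq \emptyset \text{ and } G \cap F_2 \neq \emptyset\}\bigr| = \binom{n}{k} - 2\binom{n-k}{k} + \binom{n-|F_1 \cup F_2|}{k},
$$
which, when $|F_1 \cup F_2| = k+1$, simplifies via Pascal's rule to $\binom{n}{k} - \binom{n-k}{k} - \binom{n-k-1}{k-1}$. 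Since $|\mathcal{F}| \ge 2$, any cross-intersecting $\mathcal{G}$ is contained in the set of $k$-sets meeting two fixed members of $\mathcal{F}$.

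The case analysis splits on the structure of $\mathcal{F}$. In Case A, every pair of distinct sets in $\mathcal{F}$ intersects in exactly $k-1$ elements; a standard argument then yields $\mathcal{F} = \{S \cup \{x\}: x \in T\}$ for some $(k-1)$-set $S$ and $T \subseteq [n] \setminus S$ with $|T| = |\mathcal{F}| \ge 2$. Each $G \in \mathcal{G}$ then satisfies $G \cap S \neq \emptyset$ or $T \subseteq G$, giving
$$
|\mathcal{G}| \le \binom{n}{k} - \binom{n-k+1}{k} + \binom{n-k+1-|T|}{k-|T|}
$$
(with the last term $0$ when $|T| > k$). A short Pascal computation confirms that $|T| + |\mathcal{G}|$ is maximized at $|T| = 2$, where it equals the target bound exactly. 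In Case B, some pair $F_1, F_2 \in \mathcal{F}$ has $|F_1 \cup F_2| \ge k+2$, so $|\mathcal{G}| \le \binom{n}{k} - 2\binom{n-k}{k} + \binom{n-k-2}{k}$; combined with $|\mathcal{F}| = 2$, another Pascal rearrangement shows the desired bound, with strict inequality whenever $n \ge 2k+1$. For $|\mathcal{F}| \ge 3$, one refines using a three-set inclusion--exclusion on a third set $F_3 \in \mathcal{F}$, or applies the symmetric bound on $|\mathcal{F}|$ derived from two sets $G_1, G_2 \in \mathcal{G}$ when $\mathcal{G}$ also has a spread pair.

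For the strict inequality under $n \ge 2k+1$ and $|\mathcal{F} \cap \mathcal{G}| \le 1$, I would note that the unique equality configuration from Case A with $|T|=2$---namely $\mathcal{F}_0 = \{S \cup \{x\}, S \cup \{y\}\}$ together with the maximal cross-intersecting $\mathcal{G}_0$---satisfies $\mathcal{F}_0 \subseteq \mathcal{G}_0$ (since $F_1 \cap F_2 = S \neq \emptyset$), whence $|\mathcal{F}_0 \cap \mathcal{G}_0| = 2$. The hypothesis $|\mathcal{F} \cap \mathcal{G}| \le 1$ thus rules out this extremal configuration, and a careful re-examination of the equality conditions in Cases A and B yields the strict bound. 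The principal obstacle is Case B for $|\mathcal{F}| \ge 3$: separate bounds on $|\mathcal{F}|$ and $|\mathcal{G}|$ are too weak, so the joint cross-intersection structure must be exploited, most naturally via a three-set inclusion--exclusion or a shifting-based induction on $n$.
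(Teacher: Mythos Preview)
The paper does not prove Lemma~\ref{W231}; it is quoted verbatim from \cite{W240} and used as a black box, so there is no in-paper argument to compare against. Assessing your sketch on its own merits, there are two gaps.

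The minor one is in Case~A: the hypothesis ``every two members of $\mathcal{F}$ meet in exactly $k-1$ elements'' does \emph{not} force $\mathcal{F}$ to be a star over a common $(k-1)$-set $S$. The other possibility is $\mathcal{F}\subseteq\binom{K}{k}$ for a $(k+1)$-set $K$ (for $k=3$ take $\{123,124,134\}$). This subcase is easy to dispatch but must be treated.

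The serious gap is Case~B with $|\mathcal{F}|\ge 3$, which you yourself flag as the ``principal obstacle''. Neither proposed fix works. The symmetric-bound idea fails outright: if both $\mathcal{F}$ and $\mathcal{G}$ contain a pair with union of size $\ge k+2$, you obtain $|\mathcal{F}|,|\mathcal{G}|\le\binom{n}{k}-2\binom{n-k}{k}+\binom{n-k-2}{k}$ separately, but \emph{adding} these gives roughly $2\binom{n}{k}$, far above the target $\approx\binom{n}{k}$. Three-set inclusion--exclusion on $F_1,F_2,F_3$ only sharpens the bound on $|\mathcal{G}|$ by lower-order terms, while $|\mathcal{F}|$ itself may be of order $\binom{n}{k}$; you never gain control of the sum this way. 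The difficulty is intrinsic to bounding $|\mathcal{F}|$ and $|\mathcal{G}|$ independently.

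The clean remedy---and almost certainly the route in \cite{W240}---is to replace left-shifting by the lexicographic reduction already stated in the paper as Lemma~\ref{Ln}: pass to $\mathcal{L}(n,k,|\mathcal{F}|)$ and $\mathcal{L}(n,k,|\mathcal{G}|)$. For $2\le |\mathcal{F}|\le n-k+1$ the first $|\mathcal{F}|$ lex $k$-sets are exactly $\{[k-1]\cup\{j\}:k\le j\le k-1+|\mathcal{F}|\}$, a star with core $S=[k-1]$, so you land squarely in your Case~A (and the triangle subcase never arises). Your own computation there then gives $|\mathcal{F}|+|\mathcal{G}|\le |T|+\binom{n}{k}-\binom{n-k+1}{k}+\binom{n-k+1-|T|}{k-|T|}$, and the increment in $|T|$ from $t$ to $t+1$ changes the right side by $1-\binom{n-k-t}{k-t}\le 0$, so the maximum is at $|T|=2$, giving the bound with equality. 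The range $|\mathcal{F}|>n-k+1$ is handled by swapping the roles of $\mathcal{F}$ and $\mathcal{G}$ (since then $|\mathcal{G}|$ is forced small). In short, Lemma~\ref{Ln} eliminates Case~B entirely, and your Case~A argument, suitably organised, is the whole proof.
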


\begin{lemma}[See \cite{W240}] \label{HK32}
Let $n \geq 5$ be an integer. Let $\mathcal{F} \subseteq\binom{[n]}{2}$ and $\mathcal{G} \subseteq\binom{[n]}{2}$ be  cross-intersecting families and $|\mathcal{F} \cap \mathcal{G}|\leq 2$. Suppose that $|\mathcal{F}| \geq 2$ and $|\mathcal{G}| \geq 2$. Then
$$
|\mathcal{F}|+|\mathcal{G}| \leq\binom{n}{2}-\binom{n-2}{2}-\binom{n-3}{1}+2.
$$
Moreover, if $|\mathcal{F}\cap \mathcal{G}|\le 1$, 
then the above inequality holds strictly. 
\end{lemma}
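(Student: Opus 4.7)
The plan is to regard $\mathcal F$ and $\mathcal G$ as simple graphs on $[n]$ and exploit the rigidity of cross-intersecting pairs of $2$-uniform families. The bound in the statement simplifies to $n+2$, so the goal is to show $|\mathcal F|+|\mathcal G|\le n+2$, with strict inequality when $|\mathcal F\cap\mathcal G|\le 1$. The key structural observation is the classical fact that a graph with matching number one is either a star or the triangle $K_3$, so I would split the argument according to whether $\mathcal F$ (and symmetrically $\mathcal G$) contains two vertex-disjoint edges.

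First, if $\mathcal F$ contains two vertex-disjoint edges $\{a,b\}$ and $\{c,d\}$, then every $G\in\mathcal G$ must meet both of them, forcing $G$ to be one of the four edges $\{a,c\},\{a,d\},\{b,c\},\{b,d\}$; in particular $|\mathcal G|\le 4$. By symmetry, if $\mathcal G$ also contains a vertex-disjoint pair then $|\mathcal F|\le 4$, giving $|\mathcal F|+|\mathcal G|\le 8\le n+2$ for $n\ge 6$; the small case $n=5$ requires a direct check that uses the intersection restriction $|\mathcal F\cap\mathcal G|\le 2$ to rule out equality inside the $4$-edge bipartite structure $K_{\{a,b\},\{c,d\}}$.

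In the remaining case $\mathcal F$ is a triangle or contained in a star. If $\mathcal F$ is a triangle on $\{a,b,c\}$, any $G\in\mathcal G$ intersecting all three edges must itself be an edge of the triangle, so $|\mathcal G|\le 3$ and $|\mathcal F|+|\mathcal G|\le 6\le n+2$. If $\mathcal F$ is a star at some vertex $a$ of size at least three, the three concurrent edges through $a$ pairwise intersect only at $a$, so every $G\in\mathcal G$ must contain $a$; hence $\mathcal G$ also lies in the star at $a$, and
\[ |\mathcal F|+|\mathcal G|=|\mathcal F\cup\mathcal G|+|\mathcal F\cap\mathcal G|\le (n-1)+2=n+1. \]
The tight subcase is $\mathcal F=\{\{a,b_1\},\{a,b_2\}\}$, a star of size exactly two: every $G\in\mathcal G$ either contains $a$ or equals $\{b_1,b_2\}$, so $\mathcal G\subseteq\{\{a,x\}:x\in[n]\setminus\{a\}\}\cup\{\{b_1,b_2\}\}$, yielding $|\mathcal G|\le n$ and $|\mathcal F|+|\mathcal G|\le n+2$, with equality realised by the obvious configuration.

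For the strict inequality under $|\mathcal F\cap\mathcal G|\le 1$, only this last subcase needs attention: attaining $|\mathcal G|=n$ requires both $\{a,b_1\},\{a,b_2\}\in\mathcal G$, forcing $|\mathcal F\cap\mathcal G|=2$; forbidding one of them drops $|\mathcal G|$ to at most $n-1$ and the sum to at most $n+1$. \textbf{Main obstacle.} The argument is conceptually elementary; the delicate part is the bookkeeping at the boundary values $n=5,6$ in the first case, where the $4$-edge bipartite structure already saturates the simple estimate and the restriction $|\mathcal F\cap\mathcal G|\le 2$ must be used carefully, together with the constraint that each $F\in\mathcal F$ in turn meets every $G\in\mathcal G$, to eliminate equality.
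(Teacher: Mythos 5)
The paper does not actually prove Lemma \ref{HK32}; it imports it from \cite{W240}, so your argument functions as a self-contained replacement rather than a variant of a proof given here. After checking it, I believe it is essentially correct: the stated bound does simplify to $|\mathcal F|+|\mathcal G|\le n+2$, the dichotomy via matching number (``a graph with no two disjoint edges is a star or a triangle'') is the right organizing principle, and your extremal configuration ($\mathcal F$ a two-edge star at $a$, $\mathcal G$ the full star at $a$ together with $\{b_1,b_2\}$) both shows the bound is tight and shows that equality forces $|\mathcal F\cap\mathcal G|=2$, which yields the strictness clause. Two loose ends should be tightened. First, your case split is phrased only in terms of $\mathcal F$: inside Case 1 you treat the subcase where $\mathcal G$ also contains two disjoint edges, but the subcase where $\mathcal F$ has a $2$-matching while $\mathcal G$ does not is never addressed. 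Since the hypotheses are symmetric, it is covered by running your Case 2 with the roles of $\mathcal F$ and $\mathcal G$ exchanged (that analysis never uses the matching number of the other family), but this needs to be said explicitly. Second, the deferred check when both families contain $2$-matchings does close, and more cleanly than via ``$8\le n+2$'': with $\{a,b\},\{c,d\}\in\mathcal F$ and, say, $\{a,c\},\{b,d\}\in\mathcal G$, one gets $\mathcal F\subseteq\{\{a,b\},\{c,d\},\{a,d\},\{b,c\}\}$ and $\mathcal G\subseteq\{\{a,c\},\{b,d\},\{a,d\},\{b,c\}\}$, and cross-intersection forbids having $\{a,d\}\in\mathcal F$ together with $\{b,c\}\in\mathcal G$, and likewise $\{b,c\}\in\mathcal F$ together with $\{a,d\}\in\mathcal G$; hence at most two of the four ``optional'' memberships occur and $|\mathcal F|+|\mathcal G|\le 6\le n+1$ for all $n\ge 5$. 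This disposes of the $n=5$ boundary and of strictness in that case simultaneously, without needing the hypothesis $|\mathcal F\cap\mathcal G|\le 2$ there at all.
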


Next we introduce the lexicographic order on $\binom{[n]}{k}$. For any $F, G\in \binom{[n]}{k}$, we say that $F$ is smaller than $G$ in the \textit{lexicographic order} if $\min (F \backslash G)<\min (G \backslash F)$ holds, 
where $\min(X)$ is the minimum of elements of $X$.  For $0\leq m\leq \binom{n}{k}$, let $\mathcal{L}(n, k, m)$ be the family of the first $m$ $k$-sets in the lexicographic order.

\begin{lemma}[See \cite{FK2017, H76}] \label{Ln}
Let $k, \ell, n$ be positive integers with $n>k+\ell$. If $\mathcal{F} \subseteq\binom{[n]}{k}$ and $\mathcal{G} \subseteq\binom{[n]}{\ell}$ are cross-intersecting, then $\mathcal{L}(n, k,|\mathcal{F}|)$ and $\mathcal{L}(n, \ell,|\mathcal{G}|)$ are cross-intersecting.
\end{lemma}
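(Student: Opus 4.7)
The plan is to transform the lemma into a structural statement about a transversal operator. For $\mathcal{F}\subseteq\binom{[n]}{k}$, define
$$
\mathcal{T}(\mathcal{F}) = \Bigl\{G\in\binom{[n]}{\ell} : G\cap F\ne\emptyset \text{ for every } F\in\mathcal{F}\Bigr\},
$$
so that ``$\mathcal{F},\mathcal{G}$ are cross-intersecting'' is the same as $\mathcal{G}\subseteq\mathcal{T}(\mathcal{F})$, and the conclusion of the lemma is equivalent to $\mathcal{L}(n,\ell,|\mathcal{G}|)\subseteq \mathcal{T}(\mathcal{L}(n,k,|\mathcal{F}|))$. I would split the work into two independent claims: (A) $|\mathcal{T}(\mathcal{F})|\le |\mathcal{T}(\mathcal{L}(n,k,|\mathcal{F}|))|$, and (B) $\mathcal{T}(\mathcal{L}(n,k,f))$ is itself a lex-initial segment of $\binom{[n]}{\ell}$.

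For (A), observe that $\binom{[n]}{\ell}\setminus\mathcal{T}(\mathcal{F})$ is precisely the $\ell$-shadow of the complementary family $\mathcal{F}^{c}=\{[n]\setminus F:F\in\mathcal{F}\}\subseteq\binom{[n]}{n-k}$. Kruskal--Katona lower-bounds this shadow by that of the colex-initial segment of the same cardinality; the order-reversing bijection $i\mapsto n+1-i$ on $[n]$ preserves shadow sizes while swapping lex with reverse colex, so the lex-initial segment $\mathcal{L}(n,k,f)$ has exactly this minimal $\ell$-shadow via its complement in $\binom{[n]}{n-k}$, and the size bound follows by complementing back.

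Piece (B) is the more delicate structural step, which I would establish by induction on $n$ after splitting on whether a set contains the element $1$. When $f\le\binom{n-1}{k-1}$, every member of $\mathcal{L}:=\mathcal{L}(n,k,f)$ contains $1$, so $\mathcal{T}(\mathcal{L})$ consists of all $1$-containing $\ell$-sets together with those $G\subseteq[2,n]$ of size $\ell$ hitting the reduced family $\{F\setminus\{1\}:F\in\mathcal{L}\}\subseteq\binom{[2,n]}{k-1}$, itself lex-initial; induction produces a lex-initial segment of $\binom{[2,n]}{\ell}$ for the second group, and the two groups stitch in lex order into a lex-initial segment of $\binom{[n]}{\ell}$. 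When $f>\binom{n-1}{k-1}$, every $1$-containing $k$-set already lies in $\mathcal{L}$, so any $G\in\mathcal{T}(\mathcal{L})$ with $1\notin G$ would have to meet every $(k-1)$-subset of $[2,n]$; since $|[2,n]\setminus G|=n-1-\ell\ge k$ under the hypothesis $n>k+\ell$, such a $G$ cannot exist, forcing $1\in G$. The problem then reduces to showing that $\{G\setminus\{1\}:G\in\mathcal{T}(\mathcal{L})\}$ is a lex-initial transversal in $\binom{[2,n]}{\ell-1}$ for the lex-initial family of $1$-avoiding members of $\mathcal{L}$, handled by the same induction.

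Combining (A) and (B), the family $\mathcal{T}(\mathcal{L}(n,k,|\mathcal{F}|))$ is a lex-initial segment of $\binom{[n]}{\ell}$ whose size is at least $|\mathcal{T}(\mathcal{F})|\ge|\mathcal{G}|$, hence contains $\mathcal{L}(n,\ell,|\mathcal{G}|)$, giving the lemma. The main obstacle lies in (B): Kruskal--Katona yields (A) routinely once the complementation viewpoint is adopted, but showing that transversals of lex-initial segments are again lex-initial requires the strict inequality $n>k+\ell$ at exactly the step ruling out $1$-avoiding transversal members in the second subcase, and the two subcases must be reconciled so that sub-lex-initial segments in $[2,n]$ extend consistently to a lex-initial segment in $[n]$.
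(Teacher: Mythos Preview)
The paper does not give its own proof of this lemma; it is quoted from \cite{FK2017, H76} and used as a black box. Your outline is correct and is essentially the classical argument. The reduction to Kruskal--Katona in (A) is right: $\binom{[n]}{\ell}\setminus\mathcal{T}(\mathcal{F})$ is the $\ell$-shadow of $\mathcal{F}^{c}\subseteq\binom{[n]}{n-k}$, and the colex-initial segment minimizing this shadow is carried by $i\mapsto n+1-i$ to $\mathcal{L}(n,k,|\mathcal{F}|)^{c}$, yielding $|\mathcal{T}(\mathcal{F})|\le|\mathcal{T}(\mathcal{L}(n,k,|\mathcal{F}|))|$. The inductive proof of (B) is also sound: in the case $f\le\binom{n-1}{k-1}$ the reduction is to parameters $(n-1,k-1,\ell)$, and in the case $f>\binom{n-1}{k-1}$ to $(n-1,k,\ell-1)$; in both the hypothesis $n>k+\ell$ is preserved, and it is precisely this strict inequality that forces $1\in G$ in the second subcase. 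The stitching of the two pieces into a lex-initial segment of $\binom{[n]}{\ell}$ is straightforward since $1$-containing $\ell$-sets form the initial block in lex order. You should just state the base case explicitly (e.g.\ $f=0$, or one of $k,\ell$ dropping to $0$), but this is routine.
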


\begin{lemma}\label{Last}
Suppose that $n \geq 6$. Let $\mathcal{F} \subseteq\binom{[n]}{3}$ and $\mathcal{G} \subseteq\binom{[n]}{2}$ be cross-intersecting families. 
\begin{itemize}
\item[\rm(i)]  If $1 \leq |\mathcal{F}| \leq 2n-5$, then
$
|\mathcal{F}|+|\mathcal{G}| \leq\binom{n}{2}-\binom{n-3}{2}+1.
$
\item[\rm(ii)]  If $2n-4 \leq |\mathcal{F}| \leq 3n-9$, then
$
|\mathcal{F}|+|\mathcal{G}| \leq\binom{n}{2}-\binom{n-4}{1}+1.
$
\end{itemize}
\end{lemma}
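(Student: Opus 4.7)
The strategy is to apply the lex-shifting result of Lemma~\ref{Ln} to reduce to lexicographic initial segments, and then perform an explicit case analysis based on $|\mathcal{F}|$. Since $n \geq 6 > 3+2$, Lemma~\ref{Ln} allows us to assume without loss of generality that $\mathcal{F} = \mathcal{L}(n,3,|\mathcal{F}|)$ and $\mathcal{G} = \mathcal{L}(n,2,|\mathcal{G}|)$. The lex-first $3n-9$ members of $\binom{[n]}{3}$ split into three blocks: the $n-2$ sets $\{1,2,k\}$ with $k \in [3,n]$, followed by the $n-3$ sets $\{1,3,k\}$ with $k \in [4,n]$, followed by the $n-4$ sets $\{1,4,k\}$ with $k \in [5,n]$. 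In particular, $\mathcal{L}(n,3,2n-5) = \{F \in \binom{[n]}{3}: 1 \in F,\, F \cap \{2,3\} \neq \emptyset\}$ and $\mathcal{L}(n,3,3n-9) = \{F \in \binom{[n]}{3}: 1 \in F,\, F \cap \{2,3,4\} \neq \emptyset\}$.

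For part~(i), I would separate the sub-cases $|\mathcal{F}|=1$, $2 \leq |\mathcal{F}| \leq n-2$, and $n-1 \leq |\mathcal{F}| \leq 2n-5$. When $|\mathcal{F}|=1$, $\mathcal{F}=\{\{1,2,3\}\}$, so $|\mathcal{G}| \leq \binom{n}{2}-\binom{n-3}{2}$ by counting 2-sets meeting $\{1,2,3\}$, achieving the bound $3n-5$ exactly. When $2 \leq |\mathcal{F}| \leq n-2$, all $F \in \mathcal{F}$ are of the form $\{1,2,k\}$, and any $G \in \mathcal{G}$ must contain $1$ or $2$ (with the single exception $G=\{3,4\}$ available only when $|\mathcal{F}|=2$); direct counting gives at most $2n-3$ admissible $G$'s (or $2n-2$ when $|\mathcal{F}|=2$), and summing yields the required bound. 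When $n-1 \leq |\mathcal{F}| \leq 2n-5$, the family $\mathcal{F}$ also contains some $\{1,3,k\}$-type sets, and a careful analysis shows that a 2-set $\{2,x\}$ with $x \neq 3$ can belong to $\mathcal{G}$ only when $\mathcal{F}$ contains exactly one such $\{1,3,k\}$ with $k=x$ (that is, when $|\mathcal{F}|=n-1$). In each sub-case we verify $|\mathcal{F}|+|\mathcal{G}| \leq \binom{n}{2}-\binom{n-3}{2}+1 = 3n-5$.

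For part~(ii), the hypothesis $|\mathcal{F}| \geq 2n-4$ guarantees $\{1,4,5\} \in \mathcal{F}$ together with every $\{1,2,k\}$ and every $\{1,3,k\}$ for $k \geq 4$. If some $G \in \mathcal{G}$ satisfies $1 \notin G$, then meeting every $\{1,2,k\}$ forces $2 \in G$ and meeting every $\{1,3,k\}$ forces $3 \in G$, so $G = \{2,3\}$; but then $G \cap \{1,4,5\} = \emptyset$, a contradiction. Hence $\mathcal{G} \subseteq \{\{1,x\}: x \in [2,n]\}$, so $|\mathcal{G}| \leq n-1$ and $|\mathcal{F}|+|\mathcal{G}| \leq (3n-9)+(n-1) = 4n-10$. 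The desired inequality follows from the identity
\[
\binom{n}{2}-(n-4)+1 - (4n-10) = \frac{(n-5)(n-6)}{2} \geq 0 \quad \text{for } n \geq 6.
\]

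The main technical obstacle is the sub-case of part~(i) in the range $n-1 \leq |\mathcal{F}| \leq 2n-5$, where $\mathcal{F}$ contains both $\{1,2,k\}$- and $\{1,3,k\}$-type sets. There one must track precisely when a 2-set $\{2,x\}$ with $x \neq 1,3$ can appear in $\mathcal{G}$, which happens only under tight constraints on the number of $\{1,3,k\}$-sets in $\mathcal{F}$; this delicate interaction between the two blocks of 3-sets is what produces the exact constant $3n-5$ throughout the full range.
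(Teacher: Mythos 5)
Your proposal is correct and follows essentially the same route as the paper: both reduce to lexicographic initial segments via Lemma \ref{Ln}, split part (i) according to whether $\mathcal{F}$ lies inside the $\{1,2,\ast\}$ block, spills into the $\{1,3,\ast\}$ block, or (for part (ii)) reaches the $\{1,4,\ast\}$ block, and conclude with the same counts (e.g.\ $|\mathcal{G}|\le n-1$ and the identity $\binom{n}{2}-(n-4)+1-(4n-10)=\tfrac{1}{2}(n-5)(n-6)\ge 0$ in part (ii)). The only cosmetic difference is that for $1\le|\mathcal{F}|\le n-2$ the paper observes $\mathcal{F}$ is $2$-intersecting and invokes Lemma \ref{F16}, whereas you count the admissible $2$-sets directly; both yield the same bound $3n-5$.
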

\begin{proof}
By Lemma \ref{Ln}, we may assume that $\mathcal{F}=\mathcal{L}(n, 3,|\mathcal{F}|)$ and $\mathcal{G}=\mathcal{L}(n, 2,|\mathcal{G}|)$.
If $1 \leq |\mathcal{F}| \leq n-2$, then
$
\{\{1,2,3\}\}\subseteq\mathcal{F}\subseteq \left\{\{1,2, i\}: 3 \leq i \leq n\right\}.
$
So $\mathcal{F}$ is $2$-intersecting, and the result follows from Lemma \ref{F16}.
If $|\mathcal{F}| = n-1$, then
$
\mathcal{F}= \left\{\{1,2, i\}: 3 \leq i \leq n\right\}\cup \left\{\{1,3, 4\}\right\}.
$
Since $\mathcal{F}$ and $\mathcal{G}$ are cross-intersecting, we have
$
\mathcal{G}\subseteq \left\{\{1, i\}: 2 \leq i \leq n\right\}\cup \left\{\{2,3\}, \{2,4\}\right\}.
$
It follows that
$
|\mathcal{F}|+|\mathcal{G}| \leq 2n<\binom{n}{2}-\binom{n-3}{2}+1.
$
If $n \leq |\mathcal{F}| \leq 2n-5$, then
$
\left\{\{1,2, i\}: 3 \leq i \leq n\right\}\cup \left\{\{1,3, 4\}, \{1,3, 5\}\right\}\subseteq\mathcal{F}$ and $\mathcal{F}\subseteq \left\{\{1,2, i\}: 3 \leq i \leq n\right\}\cup \left\{\{1,3, i\}: 4 \leq i \leq n\right\}.
$
Since $\mathcal{F}$ and $\mathcal{G}$ are cross-intersecting, we get
$
\mathcal{G}\subseteq \left\{\{1, i\}: 2 \leq i \leq n\right\}\cup \left\{\{2,3\}\right\}.
$
Consequently,
$
|\mathcal{F}|+|\mathcal{G}| \leq 3n-5=\binom{n}{2}-\binom{n-3}{2}+1.
$
If $2n-4 \leq |\mathcal{F}| \leq 3n-9$, then
$
\left\{\{1,2, i\}: 3 \leq i \leq n\right\}\cup \left\{\{1,3, i\}: 4 \leq i \leq n\right\}\cup \left\{\{1,4, 5\}\right\}\subseteq\mathcal{F}
$
and
$
\mathcal{F}\subseteq \left\{\{1,2, i\}: 3 \leq i \leq n\right\}\cup \left\{\{1,3, i\}: 4 \leq i \leq n\right\}\cup \left\{\{1,4, i\}: 5 \leq i \leq n\right\}.
$
This implies that
$
\mathcal{G}\subseteq \left\{\{1, i\}: 2 \leq i \leq n\right\}.
$
So
$
|\mathcal{F}|+|\mathcal{G}| \leq 4n-10.
$
For $n \geq 6$, let
$
f(n)=\binom{n}{2}-\binom{n-4}{1}+1-\left(4n-10\right)=\frac{1}{2}\left(n^2-11n+30 \right).
$
Since $f(6)= 0$, we have $f(n)\geq 0$ for $n \geq 6$. Hence,
$
|\mathcal{F}|+|\mathcal{G}| \leq \binom{n}{2}-\binom{n-4}{1}+1.
$
\end{proof}

\subsubsection{A sketch of the proof of Theorem \ref{ma1}}

Let $3 \leq s \leq n-2$ be an integer. Suppose that  $\mathcal{F} \subseteq 2^{[n]}$ satisfies $\Delta(\mathcal{F})\leq s$. Moreover, $\mathcal{F}$ is not contained in any translation of $ \mathcal{K}(n, s)$, $\mathcal{H}(n, s)$, and $\mathcal{F}$ is not contained in any translation of $\mathcal{T}(n, 5)$ if $s=5$, in addition, $\mathcal{F}$ is not contained in any translation of $\mathcal{R}(n, 2d)$ if $s=2d$, and $\mathcal{F}$ is not contained in any translation of $\mathcal{H}^*(n, 4)$, $\mathcal{R}^*(n, 4)$, $\mathcal{U}^*(n, 4)$
 if $s=4$. Let us define
$$
S=\{i:|\mathcal{F}(i)|>|\mathcal{F}(\bar{i})|\} \subseteq[n].
$$
Note that  $|\mathcal{F}+S|=|\mathcal{F}|$, $\Delta(\mathcal{F}+S)=\Delta(\mathcal{F})$ and $|(\mathcal{F}+S)(i)| \leq|(\mathcal{F}+S)(\bar{i})|$ for all $i \in[n]$. Therefore, we may
assume that $|\mathcal{F}(i)| \leq $ $|\mathcal{F}(\bar{i})|$ for all $i \in[n]$.

By Lemma \ref{K661}, we know that the down-shift operation maintains the size of a family and does not
increase its diameter.
If $\mathcal{F}$ is not a complex, then applying down-shift operations $S_j,  j \in [n]$ repeatedly to $\mathcal{F}$, either we end up with a complex $\mathcal{C}$ of the same size and $\mathcal{C}\nsubseteq \mathcal{K}(n, s)$, $\mathcal{C}\nsubseteq\mathcal{H}(n, s)$,  $\mathcal{C}\nsubseteq\mathcal{T}^*(n, 5)$ if $s=5$, in addition, $\mathcal{C}\nsubseteq\mathcal{R}(n, 2d)$ if $s=2d$, $\mathcal{C}\nsubseteq\mathcal{H}^*(n, 4)$, $\mathcal{C}\nsubseteq\mathcal{R}^*(n, 4)$, $\mathcal{C}\nsubseteq\mathcal{U}^*(n, 4)$ if $s=4$,
or at some point obtain a family $\mathcal{E}$ of the same size satisfying $\mathcal{E}\subseteq \mathcal{K}(n, s)$, or $\mathcal{E}\subseteq\mathcal{H}(n, s)$,  or $\mathcal{E}\subseteq\mathcal{T}^*(n, 5)$ if $s=5$, or $\mathcal{E}\subseteq\mathcal{R}(n, 2d)$ if $s=2d$, or $\mathcal{E}\subseteq\mathcal{H}^*(n, 4)$, or $\mathcal{E}\subseteq\mathcal{R}^*(n, 4)$,  or $\mathcal{E}\subseteq\mathcal{U}^*(n, 4)$ if $s=4$. In these latter cases, we backtrack and end up with the first such $\mathcal{E}$. Let $\mathcal{E}=S_j(\mathcal{G})$.

Next we break lengthy details of the proof into five steps. 
\begin{itemize}
   \item
{\bf Step 1.} This step deals with the following three cases.

The down-shift operations end up with a complex $\mathcal{C}$; see Case \ref{case1}. 

The down-shift operations end up with a family $\mathcal{E}$ satisfying $\mathcal{E}=S_j(\mathcal{G})\subseteq \mathcal{K}(n, 2d)$; see Case \ref{case2}, or $\mathcal{E}=S_j(\mathcal{G})\subseteq \mathcal{K}(n, 2d+1)$; see Case \ref{case3}.
 \item
{\bf Step 2.} This step deals with the following two cases.

The down-shift operations end up with a family $\mathcal{E}$ satisfying $\mathcal{E}=S_j(\mathcal{G})\subseteq \mathcal{H}(n, 2d)$; see Case \ref{case4}, or $\mathcal{E}=S_j(\mathcal{G})\subseteq \mathcal{H}(n, 2d+1)$; see Case \ref{case5}.

 \item
{\bf Step 3.} This step deals with the case that the down-shift operations end up with a family $\mathcal{E}$ satisfying $\mathcal{E}=S_j(\mathcal{G})\subseteq \mathcal{T}^*(n, 5)$; see Case \ref{case6}. 
 \item
{\bf Step 4.} This step deals with the case that the down-shift operations end up with a family $\mathcal{E}$ satisfying $\mathcal{E}=S_j(\mathcal{G})\subseteq \mathcal{R}(n, 2d)$; see Case \ref{case7}. 
 \item
{\bf Step 5.} This step deals with the following three cases.

The down-shift operations end up with a family $\mathcal{E}$ satisfying $\mathcal{E}=S_j(\mathcal{G})\subseteq \mathcal{H}^*(n, 4)$; see Case \ref{case8}, or $\mathcal{E}=S_j(\mathcal{G})\subseteq \mathcal{R}^*(n, 4)$; see Case \ref{case9}, or $\mathcal{E}=S_j(\mathcal{G})\subseteq \mathcal{U}^*(n, 4)$; see Case \ref{case10}.
\end{itemize}

In all these cases, it suffices to establish the following upper bounds on $\mathcal{F}$.
\begin{itemize}
            \item For $s=2d$ ($d\geq 2$), 
$
|\mathcal{F}| \leq \sum_{0 \leq i \leq d}\binom{n}{i}-\binom{n-d-1}{d}-\binom{n-d-2}{d-1}+2.
$
            \item  For $s=2d+1$ ($d\geq 2$), 
$
|\mathcal{F}| \leq \sum_{0 \leq i \leq d}\binom{n}{i}+\binom{n-1}{d}-\binom{n-d-2}{d}-\binom{n-d-3}{d-1}+2,
$
or alternatively,
$
|\mathcal{F}|\leq \sum_{0 \leq i \leq d}\binom{n}{i}+\binom{n-1}{d}-\binom{n-d-2}{d}-\binom{n-d-3}{d}+1.
$
\item For $s=3$, $|\mathcal{F}|\leq 8$.
        \end{itemize}

Our proof involves certain technical complexities, but the underlying conceptual framework is intuitive. Recall that $|\mathcal{F}(i)| \leq $ $|\mathcal{F}(\bar{i})|$ for all $i \in[n]$. This is not altered by the down-shift operation.
Thus we have $|\mathcal{G}(i)| \leq $ $|\mathcal{G}(\bar{i})|$ for all $i \in[n]$.
The argument proceeds as follows:  
For Case \ref{case1}: we apply a key result on $s$-union families (Theorem \ref{L24}).  
For Cases \ref{case2}-\ref{case10}:  for small $|\mathcal{G}(\bar{j})|$, we derive the bound $|\mathcal{F}| = |\mathcal{G}| \leq 2|\mathcal{G}(\bar{j})|$ via straightforward cardinality estimates.   For larger $|\mathcal{G}(\bar{j})|$, we analyze the subfamilies $\mathcal{G}_k = \mathcal{G} \cap \binom{[n]}{k}$ under intersection or cross-intersection constraints. This allows us to bound $|\mathcal{F}| = \sum_{k=0}^n |\mathcal{G}_k|$ using structural properties of such families.   
The proof relies critically on:  

(1) Optimal inequalities for intersecting families (Theorem \ref{H17}).

(2) Novel bounds for cross-intersecting families (Lemmas \ref{W231} and \ref{HK32}).

\section{Proof of Theorem \ref{ma1}}\label{se3}

\subsection{Proof of Step 1}

\begin{casebox}
\begin{case}\label{case1}
Suppose that $\mathcal{F}$ is a complex, or $\mathcal{F}$ is not a complex but there is a complex $\mathcal{C}$ obtained from $\mathcal{F}$ by repeated down-shift operations satisfying $\mathcal{C}\nsubseteq\mathcal{K}(n, s),\mathcal{H}(n, s)$, and $\mathcal{C}\nsubseteq\mathcal{T}^*(n, 5)$ if $s=5$, in addition, $\mathcal{C}\nsubseteq\mathcal{R}(n, 2d)$ if $s=2d$, $\mathcal{C}\nsubseteq\mathcal{H}^*(n, 4), \mathcal{R}^*(n, 4),\mathcal{U}^*(n, 4)$ if $s=4$.
\end{case}
\end{casebox}

\noindent{\bf Proof in Case \ref{case1}.}
By Lemma \ref{K661},
$\mathcal{F}$ or $\mathcal{C}$ is $s$-union and $|\mathcal{F}|=|\mathcal{C}|$. For $4 \leq s \leq n-2$,  Theorem \ref{L24} implies that
\begin{align*}
|\mathcal{F}| \leq \sum_{0 \leq i \leq d}\binom{n}{i}-\binom{n-d-1}{d}-\binom{n-d-2}{d-1}+2
\end{align*}
for $s=2d$, and
\begin{align*}
|\mathcal{F}| \leq \sum_{0 \leq i \leq d}\binom{n}{i}+\binom{n-1}{d}-\binom{n-d-2}{d}-\binom{n-d-3}{d-1}+2
\end{align*}
for  $s=2d+1$. For $s=3$, note that $\mathcal{F}, \mathcal{C} \nsubseteq \mathcal{K}(n, 3), \mathcal{H}(n, 3)$. So $\mathcal{F}$ or $\mathcal{C}$ is $3$-union implies that $|\mathcal{F}|$ is maximized by
$\left\{\emptyset, \{a\}, \{b\}, \{c\}, \{a, b, c\}\right\}$. Thus we have $|\mathcal{F}|\leq 5<8$.$\hfill \square$

\begin{casebox}
\begin{case}\label{case2}
 Suppose that  $s=2d$, $\mathcal{F}$ is not a complex and
there is a family $\mathcal{G}$ obtained from $\mathcal{F}$ by repeated down-shift operations satisfying $\mathcal{G}\nsubseteq\mathcal{K}(n, 2d),\mathcal{H}(n, 2d),\mathcal{R}(n, 2d)$, in addition, $\mathcal{G}\nsubseteq\mathcal{H}^*(n, 4), \mathcal{R}^*(n, 4), \mathcal{U}^*(n, 4)$ if $s=4$, but $S_j(\mathcal{G})\subseteq\mathcal{K}(n, 2d)$ for some $j \in [n]$.
\end{case}
\end{casebox}

\noindent{\bf Proof in Case \ref{case2}.}
We begin by observing that
$
|S_j(\mathcal{G})|=|\mathcal{G}|=|\mathcal{F}|, ~ \Delta(S_j(\mathcal{G}))\leq\Delta(\mathcal{G}) \leq s
$
and $|\mathcal{G}(i)| \leq |\mathcal{G}(\bar{i})|$ for all $i \in[n]$.
By assumptions, we get ${\rm max}\{|G|: G\in \mathcal{G}\}=d+1$. Moreover, for any $G\in \mathcal{G}_{d+1}$, we have $j\in G$,  $G\backslash \{j\}\notin \mathcal{G}$ and $G\backslash \{j\}\in S_j(\mathcal{G})$. It follows that $\mathcal{G}_{d+1}(j)\neq \emptyset, \mathcal{G}_{d+1}(\bar{j})= \emptyset$ and $\mathcal{G}_{d+1}(j)\cap \mathcal{G}_{d}(\bar{j})=\emptyset$.
The condition $\Delta(\mathcal{G})\leq 2d$ implies that $\mathcal{G}_{d+1}(j)$ and $\mathcal{G}_{d}(\bar{j})$ are cross-intersecting. Furthermore, the condition $\mathcal{G}\nsubseteq\mathcal{H}(n, 2d)$ implies that $|\mathcal{G}_{d+1}(j)|\geq 2$.

\begin{itemize}
   \item
If $|\mathcal{G}_{d}(\bar{j})|\leq 1$, then
$
|\mathcal{F}|\leq 2|\mathcal{G}(\bar{j})|=2\sum_{0 \leq i \leq d}|\mathcal{G}_i(\bar{j})|\leq 2\sum_{0 \leq i \leq d-1}\binom{n-1}{i}+2=\sum_{0 \leq i \leq d-1}\binom{n}{i}+\binom{n-1}{d-1}+2,
$
which is smaller than the required upper bound.

   \item
If $|\mathcal{G}_{d}(\bar{j})|\geq 2$, note that $d\geq 2$ and $n\geq 2d+2$, then applying Lemmas \ref{W231} and \ref{HK32} to $\mathcal{G}_d(\bar{j})$ and $\mathcal{G}_{d+1}(j)$ yields
$
|\mathcal{G}_d(\bar{j})|+|\mathcal{G}_{d+1}(j)|
\leq\binom{n-1}{d}-\binom{n-d-1}{d}-\binom{n-d-2}{d-1}+1.
$
Since $|\mathcal{F}|= \sum_{0 \leq i \leq d+1}|\mathcal{G}_i|$,  it follows that
\begin{align}\label{c0}
\begin{split}
|\mathcal{F}|\leq &\sum_{0 \leq i \leq d-1}\binom{n}{i}+|\mathcal{G}_d(j)|+|\mathcal{G}_d(\bar{j})|+|\mathcal{G}_{d+1}(j)|\\
\leq&\sum_{0 \leq i \leq d-1}\binom{n}{i}+\binom{n-1}{d-1}+\binom{n-1}{d}-\binom{n-d-1}{d}-\binom{n-d-2}{d-1}+1\\
=&\sum_{0 \leq i \leq d}\binom{n}{i}-\binom{n-d-1}{d}-\binom{n-d-2}{d-1}+1\\
<&\sum_{0 \leq i \leq d}\binom{n}{i}-\binom{n-d-1}{d}-\binom{n-d-2}{d-1}+2.
\end{split}
\end{align}
\end{itemize}
This completes the proof in Case \ref{case2}.
$\hfill \square$

\begin{casebox}
\begin{case}\label{case3}
 Suppose that  $s=2d+1$, $\mathcal{F}$ is not a complex and
there is a family $\mathcal{G}$ obtained from $\mathcal{F}$ by repeated down-shift operations satisfying $\mathcal{G}\nsubseteq\mathcal{K}(n, 2d+1), \mathcal{H}(n, 2d+1)$,  in addition, $\mathcal{G}\nsubseteq\mathcal{T}^*(n, 5)$ if $s=5$, but $S_j(\mathcal{G})\subseteq\mathcal{K}(n, 2d+1)$ for some $j \in [n]$.
\end{case}
\end{casebox}

\noindent{\bf Proof in Case \ref{case3}.}
Recall that
$
|\mathcal{G}|=|\mathcal{F}|,~ \Delta(\mathcal{G}) \leq s
$
and $|\mathcal{G}(i)| \leq |\mathcal{G}(\bar{i})|$ for all $i \in[n]$.
Since $\mathcal{G}\nsubseteq\mathcal{K}(n, 2d+1)$, we have
${\rm max}\{|G|: G\in \mathcal{G}\}\geq d+1$. Our proof falls naturally into two cases.

{\bf Subcase 3.1.} ${\rm max}\{|G|: G\in \mathcal{G}\}=d+1$.

Since $\Delta(\mathcal{G}) \leq 2d+1$, it follows that $\mathcal{G}_{d+1}$ is intersecting. Obviously, $\mathcal{G}_{d+1}$ is not EKR otherwise $\mathcal{G}\subseteq \mathcal{K}(n, 2d+1)$.
In addition,  $\mathcal{G}_{d+1}$ is not HM otherwise $\mathcal{G}\subseteq \mathcal{H}(n, 2d+1)$, and if $d=2, \mathcal{G}_{3} \nsubseteq \mathcal{T}(n, 3)$ otherwise $\mathcal{G}\subseteq \mathcal{T}^*(n, 5)$.  For $d=1$, the case is trivial because there is no such $\mathcal{G}_{2}$.
For $d\geq 2$,  applying Theorem \ref{H17} to $\mathcal{G}_{d+1}$ yields
$
|\mathcal{G}_{d+1}|\leq\binom{n-1}{d}-\binom{n-d-2}{d}-\binom{n-d-3}{d-1}+2.
$
This leads to the following bound on $|\mathcal{F}|$:
\begin{align*}
|\mathcal{F}|\leq \sum_{0 \leq i \leq d}\binom{n}{i}+\binom{n-1}{d}-\binom{n-d-2}{d}-\binom{n-d-3}{d-1}+2.
\end{align*}

{\bf Subcase 3.2.} ${\rm max}\{|G|: G\in \mathcal{G}\}\geq d+2$.

Firstly, the condition $S_j(\mathcal{G})\subseteq\mathcal{K}(n, 2d+1)$ implies that ${\rm max}\{|G|: G\in \mathcal{G}\}= d+2$ and $|\mathcal{G}_{d+1}(\bar{y},\bar{j})|=0$. Obviously, $j\neq y$. For any $G\in \mathcal{G}_{d+2}$, we have $j, y\in G$,  $G\backslash \{j\}\notin \mathcal{G}$ and $G\backslash \{j\}\in S_j(\mathcal{G})$. Then $|\mathcal{G}_{d+2}|=|\mathcal{G}_{d+2}(y,j)|\neq 0$. It follows that $\mathcal{G}_{d+2}(y,j)\cap \mathcal{G}_{d+1}(y,\bar{j})=\emptyset$ otherwise there exists $G\in \mathcal{G}_{d+2}$ satisfying $G\backslash \{j\}\in \mathcal{G}_{d+1}$. In addition, $\mathcal{G}_{d+1}(\bar{y},j)\cap \mathcal{G}_{d}(\bar{y},\bar{j})=\emptyset$ otherwise there exists $y\notin G\in \mathcal{G}_{d+1}$ satisfying $G\backslash \{j\}\in \mathcal{G}_{d}$ and then $y\notin S_j(G)=G$. So $S_j(\mathcal{G})\nsubseteq\mathcal{K}(n, 2d+1)$, a contradiction. Secondly, the condition $\Delta(\mathcal{G})\leq 2d+1$ implies that
$\mathcal{G}_{d+2}(y,j)$ and  $\mathcal{G}_{d}(\bar{y},\bar{j})$ are cross-intersecting, $\mathcal{G}_{d+1}(y,\bar{j})$ and  $\mathcal{G}_{d+1}(\bar{y},j)$ are cross-intersecting.

To formalize these observations, we divide the proof into two lemmas.

\begin{lemma}\label{c3.1}
Suppose that ${\rm max}\{|G|: G\in \mathcal{G}\}=d+2$ and $\mathcal{G}_{d}(\bar{y},\bar{j})\neq \emptyset$. Then
\begin{align*}
|\mathcal{F}|\leq \sum_{0 \leq i \leq d}\binom{n}{i}+\binom{n-1}{d}-2\binom{n-d-2}{d}+2.
\end{align*}
In particular, if $d=1$, then $|\mathcal{F}|\leq 8$.
\end{lemma}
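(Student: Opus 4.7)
The plan is to bound $|\mathcal{F}| = |\mathcal{G}|$ by expanding $\sum_{k=0}^{d+2} |\mathcal{G}_k|$ and estimating each level separately. For $0 \le k \le d-1$ I would use the trivial estimate $|\mathcal{G}_k| \le \binom{n}{k}$, and concentrate the work on $|\mathcal{G}_d| + |\mathcal{G}_{d+1}| + |\mathcal{G}_{d+2}|$, splitting each level by the trace on the distinguished pair $\{y,j\}$. By the Case 3.2 setup we already have $\mathcal{G}_{d+1}(\bar{y}, \bar{j}) = \emptyset$ and $\mathcal{G}_{d+2} = \mathcal{G}_{d+2}(y, j)$, which leaves seven surviving components. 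Four of them admit trivial bounds $|\mathcal{G}_d(y,j)| \le \binom{n-2}{d-2}$ and $|\mathcal{G}_d(y,\bar{j})|, |\mathcal{G}_d(\bar{y}, j)|, |\mathcal{G}_{d+1}(y,j)| \le \binom{n-2}{d-1}$.

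The remaining four components all sit inside $\binom{[n]\setminus\{y,j\}}{d}$, and the structural analysis preceding the lemma provides two cross-intersecting pairs: first, $\mathcal{G}_{d+2}(y, j)$ and $\mathcal{G}_d(\bar{y}, \bar{j})$, both nonempty since $\max\{|G|: G\in\mathcal{G}\} = d+2$ and by the lemma's hypothesis; second, $\mathcal{G}_{d+1}(y, \bar{j})$ and $\mathcal{G}_{d+1}(\bar{y}, j)$. Provided both members of each pair are nonempty, Lemma \ref{H672} caps each pair's total by $\binom{n-2}{d} - \binom{n-d-2}{d} + 1$. I would then sum all contributions and apply Pascal's identity (collapsing $\binom{n-2}{\cdot}$ sums first to $\binom{n-1}{\cdot}$ and then to $\binom{n}{d}$) to arrive at exactly $\sum_{i=0}^{d} \binom{n}{i} + \binom{n-1}{d} - 2\binom{n-d-2}{d} + 2$, which specializes to $8$ when $d=1$.

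The main obstacle will be the degenerate situation in which at least one of $\mathcal{G}_{d+1}(y, \bar{j})$, $\mathcal{G}_{d+1}(\bar{y}, j)$ is empty, so that Lemma \ref{H672} does not apply to the second pair. To circumvent this I would invoke the degree condition $|\mathcal{G}(i)| \le |\mathcal{G}(\bar{i})|$ inherited from $\mathcal{F}$ through the down-shifts. If $\mathcal{G}_{d+1}(y, \bar{j}) = \emptyset$, then combined with $\mathcal{G}_{d+1}(\bar{y}, \bar{j}) = \emptyset$ and $\mathcal{G}_{d+2}(\bar{j}) = \emptyset$ this forces $|\mathcal{G}(\bar{j})| = \sum_{k=0}^{d} |\mathcal{G}_k(\bar{j})|$, which is still tightly controlled by Lemma \ref{H672} applied to the first cross-intersecting pair together with the trivial bounds. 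Doubling via $|\mathcal{G}| \le 2|\mathcal{G}(\bar{j})|$ then recovers the required inequality; the symmetric case $\mathcal{G}_{d+1}(\bar{y}, j) = \emptyset$ is handled analogously using $|\mathcal{G}| \le 2|\mathcal{G}(\bar{y})|$.
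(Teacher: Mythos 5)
Your proposal is correct and follows essentially the same route as the paper: the paper also splits on whether $\mathcal{G}_{d+1}(y,\bar{j})$ and $\mathcal{G}_{d+1}(\bar{y},j)$ are both nonempty, applies Lemma \ref{H672} to the two cross-intersecting pairs in the main case, and falls back on $|\mathcal{F}|\le 2|\mathcal{G}(\bar{j})|$ (resp. $2|\mathcal{G}(\bar{y})|$) in the degenerate cases, using $|\mathcal{G}_{d+2}(y,j)|\ge 1$ in \eqref{c1} to control $|\mathcal{G}_d(\bar{y},\bar{j})|$. The only cosmetic differences are that the paper bounds $|\mathcal{G}_d(j)|\le\binom{n-1}{d-1}$ without splitting on $y$ (numerically the same as your $\binom{n-2}{d-2}+\binom{n-2}{d-1}$) and that your count of ``seven'' surviving components should be eight, though your subsequent $4+4$ accounting is the correct one.
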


\begin{proof}
Since $\mathcal{G}_{d+2}(y, j)\neq \emptyset$ and $n\geq 2d+3$,  by Lemma \ref{H672}, we have
\begin{align}\label{c1}
|\mathcal{G}_{d+2}(y,j)|+|\mathcal{G}_{d}(\bar{y},\bar{j})|\leq\binom{n-2}{d}-\binom{n-d-2}{d}+1.
\end{align}
Moreover, we clearly have the following elementary bounds:
\begin{align}\label{c2}
|\mathcal{G}_{d+1}(\bar{y},\bar{j})|=0,~ |\mathcal{G}_{d+1}(y,j)|\leq \binom{n-2}{d-1}, ~ |\mathcal{G}_{d}(j)|\leq \binom{n-1}{d-1}, ~ |\mathcal{G}_{d}(y,\bar{j})|\leq \binom{n-2}{d-1}.
\end{align}

\begin{itemize}
 \item
If  $\mathcal{G}_{d+1}(y,\bar{j})$ and  $\mathcal{G}_{d+1}(\bar{y},j)$ are both non-empty, then applying Lemma \ref{H672} again, it yields
\begin{align}\label{c4}
|\mathcal{G}_{d+1}(y,\bar{j})|+|\mathcal{G}_{d+1}(\bar{y},j)|\leq\binom{n-2}{d}-\binom{n-d-2}{d}+1.
\end{align}
This, together with (\ref{c1}) and (\ref{c2}), implies
\begin{align}\label{ct4.1}
\begin{split}
|\mathcal{F}|\leq& \sum_{0 \leq i \leq d-1}\binom{n}{i}+|\mathcal{G}_d(j)|+|\mathcal{G}_d(y,\bar{j})|+|\mathcal{G}_{d+2}(y,j)|+|\mathcal{G}_{d}(\bar{y},\bar{j})|\\
&+|\mathcal{G}_{d+1}(y,j)|+|\mathcal{G}_{d+1}(y,\bar{j})|+|\mathcal{G}_{d+1}(\bar{y},j)|\\
\leq&\sum_{0 \leq i \leq d-1}\binom{n}{i}+\binom{n-1}{d-1}+\binom{n-2}{d-1}+\binom{n-2}{d}-\binom{n-d-2}{d}+1\\
&+\binom{n-2}{d-1}+\binom{n-2}{d}-\binom{n-d-2}{d}+1\\
=&\sum_{0 \leq i \leq d}\binom{n}{i}+\binom{n-1}{d}-2\binom{n-d-2}{d}+2.
\end{split}
\end{align}

   \item  If  $\mathcal{G}_{d+1}(y,\bar{j})=\emptyset$, then $\mathcal{G}_{d+1}(\bar{j})=\emptyset$. From (\ref{c1}), we get
$
|\mathcal{G}_{d}(\bar{y},\bar{j})|\leq\binom{n-2}{d}-\binom{n-d-2}{d}.
$
Combining this with (\ref{c2}), we obtain
\begin{align}\label{ca1}
|\mathcal{G}_{d}(\bar{j})|\leq\binom{n-2}{d}+\binom{n-2}{d-1}-\binom{n-d-2}{d}=\binom{n-1}{d}-\binom{n-d-2}{d}.
\end{align}
Note that $\mathcal{G}_{d+2}(\bar{j})=\emptyset$. Then the following inequality holds:
\begin{align}\label{c3}
\begin{split}
|\mathcal{F}|\leq& 2|\mathcal{G}(\bar{j})|=2\sum_{0 \leq i \leq d-1}|\mathcal{G}_i(\bar{j})|+2|\mathcal{G}_{d}(\bar{j})| \leq 2\sum_{0 \leq i \leq d-1}\binom{n-1}{i}+2 \left(\binom{n-1}{d}-\binom{n-d-2}{d}\right)\\
=&\sum_{0 \leq i \leq d}\binom{n}{i}+\binom{n-1}{d}-2\binom{n-d-2}{d},
\end{split}
\end{align}
which is clearly smaller than the upper bound in Lemma \ref{c3.1}.

   \item  If  $\mathcal{G}_{d+1}(\bar{y},j)=\emptyset$, then $\mathcal{G}_{d+1}(\bar{y})=\emptyset$. Note that we still have
$
|\mathcal{G}_{d}(\bar{y},\bar{j})|\leq\binom{n-2}{d}-\binom{n-d-2}{d}.
$
This, together with $|\mathcal{G}_{d}(\bar{y},j)|\leq \binom{n-2}{d-1}$, implies that
\begin{align}\label{ca2}
|\mathcal{G}_{d}(\bar{y})|\leq\binom{n-2}{d}+\binom{n-2}{d-1}-\binom{n-d-2}{d}=\binom{n-1}{d}-\binom{n-d-2}{d}.
\end{align}
Since $\mathcal{G}_{d+2}(\bar{y})=\emptyset$, the same argument as (\ref{c3}) yields
$
|\mathcal{F}|\leq 2|\mathcal{G}(\bar{y})|
\leq \sum_{0 \leq i \leq d}\binom{n}{i}+\binom{n-1}{d}-2\binom{n-d-2}{d}.
$
\end{itemize}
This completes the proof of Lemma \ref{c3.1}.
\end{proof}

\begin{remark}
In Lemma \ref{c3.1}, for $d=1$, $|\mathcal{F}|$ can be maximized by a translation of the following family:
$
\{\emptyset, \{j\}, \{y\}, \{x_0\}, \{y, j\}, \{y, x_1\}, \{j, x_1\}, \{y, j, x_0\}\}.
$ 
\end{remark}

\begin{lemma}\label{c3.2}
Suppose that ${\rm max}\{|G|: G\in \mathcal{G}\}=d+2$ and $\mathcal{G}_{d}(\bar{y},\bar{j})= \emptyset$. Then
\begin{align*}
|\mathcal{F}|\leq \sum_{0 \leq i \leq d}\binom{n}{i}+\binom{n-1}{d}-2\binom{n-d-2}{d}.
\end{align*}
In particular, if $d=1$, then $|\mathcal{F}|\leq 6$.
\end{lemma}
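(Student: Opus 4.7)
The plan is to adapt the case split of Lemma~\ref{c3.1}, but the additional hypothesis $\mathcal{G}_d(\bar{y},\bar{j})=\emptyset$ forces us away from two of its subcases and toward an averaging estimate. Set $a=|\mathcal{G}_{d+1}(y,\bar{j})|$, $b=|\mathcal{G}_{d+1}(\bar{y},j)|$. Recall from the Case~\ref{case3} preamble that $\mathcal{G}_{d+1}(\bar{y},\bar{j})=\emptyset$, $\mathcal{G}_{d+2}=\mathcal{G}_{d+2}(y,j)$, and that $\mathcal{G}_{d+1}(y,\bar{j})$, $\mathcal{G}_{d+1}(\bar{y},j)$ form a cross-intersecting pair in $\binom{[n]\setminus\{y,j\}}{d}$.

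The first and central step is to prove $a\geq 1$ and $b\geq 1$. Suppose to the contrary that $a=0$. Together with $\mathcal{G}_{d+1}(\bar{y},\bar{j})=\emptyset$, this forces every $(d+1)$-set in $\mathcal{G}$ to contain $j$. I then trace the effect of the translation $\mathcal{G}\mapsto \mathcal{G}+\{j\}$ slice by slice: (i) no set in $\mathcal{G}+\{j\}$ of size $d+2$ appears, since such a set would have to come from a member of $\mathcal{G}_{d+1}(\bar{j})$, which is empty; (ii) the $(d+1)$-sets in $\mathcal{G}+\{j\}$ originate from $\mathcal{G}_{d+2}(y,j)$ (becoming $(d+1)$-sets with $y$ but no $j$) and from $\mathcal{G}_d(y,\bar{j})$ (becoming $(d+1)$-sets containing both $y$ and $j$). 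Every such $(d+1)$-set contains $y$, so $\mathcal{G}+\{j\}\subseteq \mathcal{K}(n,2d+1)$ with pivot $y$, contradicting that $\mathcal{G}$ lies in no translate of $\mathcal{K}(n,2d+1)$. The symmetric analysis of $\mathcal{G}+\{y\}$ gives $b\geq 1$.

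With $a,b\geq 1$, Lemma~\ref{H672} supplies $a+b\leq \binom{n-2}{d}-\binom{n-d-2}{d}+1$. I then combine the preserved inequalities $|\mathcal{G}(y)|\leq|\mathcal{G}(\bar{y})|$ and $|\mathcal{G}(j)|\leq|\mathcal{G}(\bar{j})|$ into the averaging bound $|\mathcal{F}|=|\mathcal{G}|\leq |\mathcal{G}(\bar{y})|+|\mathcal{G}(\bar{j})|$. Using $\mathcal{G}_d(\bar{y},\bar{j})=\mathcal{G}_{d+1}(\bar{y},\bar{j})=\mathcal{G}_{d+2}(\bar{y})=\emptyset$, I estimate
\begin{align*}
|\mathcal{G}(\bar{y})|&\leq \sum_{k=0}^{d-1}\binom{n-1}{k}+\binom{n-2}{d-1}+b,\\
|\mathcal{G}(\bar{j})|&\leq \sum_{k=0}^{d-1}\binom{n-1}{k}+\binom{n-2}{d-1}+a,
\end{align*}
and summing these with the Hilton--Milner bound yields, after repeated applications of Pascal's identity, the cleaner inequality $|\mathcal{F}|\leq \sum_{k=0}^d \binom{n}{k}+\binom{n-2}{d-1}-\binom{n-d-2}{d}+1$.

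The last step reduces the desired inequality to verifying $\binom{n-2}{d}-\binom{n-d-2}{d}\geq 1$, which is immediate for $n\geq 2d+2$; for $d=1$ the same computation specializes to $|\mathcal{F}|\leq 6$, matching the stated bound. The principal technical hurdle is Step~2: establishing $a,b\geq 1$ demands careful slice-by-slice tracking of $\mathcal{G}+\{j\}$, in which the vanishing conditions $\mathcal{G}_{d+1}(\bar{y},\bar{j})=\emptyset$ (from Case~\ref{case3}) and $\mathcal{G}_d(\bar{y},\bar{j})=\emptyset$ (from the statement of Lemma~\ref{c3.2}) together eliminate every potential obstruction to containment in $\mathcal{K}(n,2d+1)$.
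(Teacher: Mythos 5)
Your slice-by-slice computation in Step 2 is accurate: if $a=|\mathcal{G}_{d+1}(y,\bar{j})|=0$, then indeed every $(d+1)$-set of $\mathcal{G}+\{j\}$ contains $y$ and no larger sets survive, so $\mathcal{G}\subseteq \mathcal{K}(n,2d+1)+\{j\}$. The gap is that this is \emph{not} a contradiction. The hypothesis carried into Case~\ref{case3} is $\mathcal{G}\nsubseteq\mathcal{K}(n,2d+1)$ in the paper's sense of ``isomorphic to a subfamily'' (a permutation of the ground set, which preserves set sizes); it says nothing about translations, which change sizes. There is no assumption anywhere that $\mathcal{G}$ avoids all translates of $\mathcal{K}(n,2d+1)$ --- indeed the family $\mathcal{K}(n,2d+1)+\{j\}$ itself satisfies $\Delta\le 2d+1$, has maximum set size $d+2$, is not isomorphic to a subfamily of $\mathcal{K}(n,2d+1)$, and down-shifts under $S_j$ exactly to $\mathcal{K}(n,2d+1)$, so translates of $\mathcal{K}$ are precisely the kind of object this case is designed to handle. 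Nor can you pass the contradiction back to $\mathcal{F}$: down-shifting does not preserve ``not contained in a translate of $\mathcal{K}$'' in either direction. Consequently $a=0$ (and symmetrically $b=0$) are live subcases, and without $a,b\ge 1$ you cannot invoke Lemma~\ref{H672}, so your proof collapses there.

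The fix is cheap and is what the paper does: when $\mathcal{G}_{d+1}(y,\bar{j})=\emptyset$ one has $\mathcal{G}_{d+1}(\bar{j})=\mathcal{G}_{d+2}(\bar{j})=\emptyset$ and $\mathcal{G}_d(\bar{j})=\mathcal{G}_d(y,\bar{j})$, so the standing inequality $|\mathcal{G}(j)|\le|\mathcal{G}(\bar{j})|$ gives
\begin{align*}
|\mathcal{F}|\le 2|\mathcal{G}(\bar{j})|\le 2\sum_{0\le i\le d-1}\binom{n-1}{i}+2\binom{n-2}{d-1},
\end{align*}
which is below the target; the case $\mathcal{G}_{d+1}(\bar{y},j)=\emptyset$ is handled the same way with $\bar{y}$ in place of $\bar{j}$. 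Your treatment of the remaining subcase (both nonempty) is sound and mildly different from the paper's: you average $|\mathcal{F}|\le|\mathcal{G}(\bar{y})|+|\mathcal{G}(\bar{j})|$ and split $a+b$ symmetrically, whereas the paper bounds $2|\mathcal{G}(\bar{j})|$ alone after extracting $a\le\binom{n-2}{d}-\binom{n-d-2}{d}$ from the Hilton--Milner inequality; both routes close with the same Pascal manipulations, and your final reduction to $\binom{n-2}{d}-\binom{n-d-2}{d}\ge 1$ checks out.
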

\begin{proof}

If  $\mathcal{G}_{d+1}(y,\bar{j})$ and  $\mathcal{G}_{d+1}(\bar{y},j)$ are both non-empty, then (\ref{c4}) holds, namely,
$
|\mathcal{G}_{d+1}(y,\bar{j})|+|\mathcal{G}_{d+1}(\bar{y},j)|\leq\binom{n-2}{d}-\binom{n-d-2}{d}+1.
$
This, together with $|\mathcal{G}_{d+1}(\bar{y},\bar{j})|=0$, implies
$
|\mathcal{G}_{d+1}(\bar{j})|=|\mathcal{G}_{d+1}(y,\bar{j})|\leq\binom{n-2}{d}-\binom{n-d-2}{d}.
$
In addition, we have
$
|\mathcal{G}_{d}(\bar{j})|=|\mathcal{G}_{d}(y,\bar{j})|\leq\binom{n-2}{d-1},~ |\mathcal{G}_{d+2}(\bar{j})|=0.
$
Therefore, we obtain
\begin{align*}
|\mathcal{F}|\leq& 2|\mathcal{G}(\bar{j})|=2\sum_{0 \leq i \leq d-1}|\mathcal{G}_i(\bar{j})|+2|\mathcal{G}_{d}(\bar{j})|+2|\mathcal{G}_{d+1}(\bar{j})|\\
\leq& 2\sum_{0 \leq i \leq d-1}\binom{n-1}{i}+2\binom{n-2}{d-1}+2 \left(\binom{n-2}{d}-\binom{n-d-2}{d}\right)\\
=&\sum_{0 \leq i \leq d}\binom{n}{i}+\binom{n-1}{d}-2\binom{n-d-2}{d}.
\end{align*}

   Next let us consider the case $\mathcal{G}_{d+1}(y,\bar{j})=\emptyset$. Recall that $|\mathcal{G}_{d+1}(\bar{y},\bar{j})|=0$. Then $\mathcal{G}_{d+1}(\bar{j})=\emptyset$. Since $\mathcal{G}_{d+2}(\bar{j})=\emptyset$ and  $\mathcal{G}_{d}(\bar{j})= \mathcal{G}_{d}(y,\bar{j})$, we get
\begin{align}\label{c5}
\begin{split}
|\mathcal{F}|\leq& 2|\mathcal{G}(\bar{j})|=2\sum_{0 \leq i \leq d-1}|\mathcal{G}_i(\bar{j})|+2|\mathcal{G}_{d}(y,\bar{j})| \leq 2\sum_{0 \leq i \leq d-1}\binom{n-1}{i}+2 \binom{n-2}{d-1}\\
=&\sum_{0 \leq i \leq d-1}\binom{n}{i}+\binom{n-1}{d-1}+2 \binom{n-2}{d-1}< \sum_{0 \leq i \leq d}\binom{n}{i}+\binom{n-1}{d}-2\binom{n-d-2}{d}.
\end{split}
\end{align}

  Finally, suppose that $\mathcal{G}_{d+1}(\bar{y},j)=\emptyset$.
Since $|\mathcal{G}_{d+1}(\bar{y},\bar{j})|=0$, we have $\mathcal{G}_{d+1}(\bar{y})=\emptyset$. Observe that $\mathcal{G}_{d+2}(\bar{y})=\emptyset$ and  $\mathcal{G}_{d}(\bar{y})= \mathcal{G}_{d}(\bar{y},j)$. Then the same argument as (\ref{c5}) yields
$
|\mathcal{F}|\leq 2|\mathcal{G}(\bar{y})|=2\sum_{0 \leq i \leq d-1}|\mathcal{G}_i(\bar{y})|+2|\mathcal{G}_{d}(\bar{y},j)|
<\sum_{0 \leq i \leq d}\binom{n}{i}+\binom{n-1}{d}-2\binom{n-d-2}{d}.
$
This completes the proof of Lemma \ref{c3.2}.
\end{proof}

Observe that
$
\sum_{0 \leq i \leq d}\binom{n}{i}+\binom{n-1}{d}-2\binom{n-d-2}{d}+2
< \sum_{0 \leq i \leq d}\binom{n}{i}+\binom{n-1}{d}-\binom{n-d-2}{d}-\binom{n-d-3}{d-1}+2.
$
By Lemmas \ref{c3.1} and \ref{c3.2}, we complete the proof in Subcase $3.2$. So the result holds in Case \ref{case3}.
$\hfill \square$

\subsection{Proof of Step 2}
\begin{casebox}
\begin{case}\label{case4}
 Suppose that  $s=2d$, $\mathcal{F}$ is not a complex and
there is a family $\mathcal{G}$ obtained from $\mathcal{F}$ by repeated down-shift operations satisfying $\mathcal{G}\nsubseteq\mathcal{K}(n, 2d), \mathcal{H}(n, 2d), \mathcal{R}(n, 2d)$, in addition, $\mathcal{G}\nsubseteq\mathcal{H}^*(n, 4), \mathcal{R}^*(n, 4), \mathcal{U}^*(n, 4)$ if $s=4$, but $S_j(\mathcal{G})\subseteq\mathcal{H}(n, 2d)$ for some $j \in [n]$.
\end{case}
\end{casebox}

\noindent{\bf Proof in Case \ref{case4}.}
By Case \ref{case2}, we may assume that $S_j(\mathcal{G})\nsubseteq\mathcal{K}(n, 2d)$.
Recall that
$
|\mathcal{G}|=|\mathcal{F}|, ~  \Delta(\mathcal{G}) \leq  s
$
and $|\mathcal{G}(i)| \leq |\mathcal{G}(\bar{i})|$ for all $i \in[n]$.
Since $\mathcal{G}\nsubseteq\mathcal{K}(n, 2d)$,  we get ${\rm max}\{|G|: G\in \mathcal{G}\}\geq d+1$. Our proof falls naturally into two cases.

{\bf Subcase 4.1.} ${\rm max}\{|G|: G\in \mathcal{G}\}=d+1$.

In this case, we have $|\mathcal{G}_{d+1}(\bar{j})|\leq 1$.
First let us consider the case  $|\mathcal{G}_{d+1}(\bar{j})|=0$.

\begin{lemma}\label{c3.3}
Suppose that ${\rm max}\{|G|: G\in \mathcal{G}\}=d+1$ and $|\mathcal{G}_{d+1}(\bar{j})|=0$. Then
\begin{align*}
|\mathcal{F}|\leq \sum_{0 \leq i \leq d}\binom{n}{i}-\binom{n-d-1}{d}-\binom{n-d-2}{d-1}+1.
\end{align*}
\end{lemma}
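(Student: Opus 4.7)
The plan is to bound $|\mathcal{F}|=|\mathcal{G}|$ by exploiting a cross-intersecting estimate between the top two size layers $\mathcal{G}_d$ and $\mathcal{G}_{d+1}$, and to upgrade the generic bound by a unit using the extra information that a single down-shift $S_j$ already sends $\mathcal{G}$ inside $\mathcal{H}(n,2d)$. Since $|\mathcal{G}_{d+1}(\bar{j})|=0$, the family $\mathcal{G}_{d+1}$ is a star centered at $j$; write $\mathcal{A}=\mathcal{G}_{d+1}(j)\subseteq\binom{[n]\setminus\{j\}}{d}$, so that $|\mathcal{A}|=|\mathcal{G}_{d+1}|$. For every $F\in\mathcal{G}_d$ and every $G\in\mathcal{G}_{d+1}$, the identity $|F+G|=|F|+|G|-2|F\cap G|=2d+1-2|F\cap G|$ together with $\Delta(\mathcal{G})\leq 2d$ forces $|F\cap G|\geq 1$. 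Restricting to sets not containing $j$, the families $\mathcal{A}$ and $\mathcal{G}_d(\bar{j})$ are therefore cross-intersecting in $\binom{[n-1]}{d}$, where the ground set $[n]\setminus\{j\}$ has size $n-1\geq 2d+1$ (using $s=2d\leq n-2$).

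Two structural observations now drive the estimate. First, $|\mathcal{A}|\geq 2$: otherwise $\mathcal{G}_{d+1}=\{D\}$ with $D=A\cup\{j\}$, and the cross-intersection condition then forces $\mathcal{G}_d\subseteq\{F\in\binom{[n]}{d}:F\cap D\neq\emptyset\}$, whence $\mathcal{G}\subseteq\mathcal{H}(n,2d)$ with this $D$, contradicting the standing hypothesis. Second, $|\mathcal{A}\cap\mathcal{G}_d(\bar{j})|\leq 1$: the only $G\in\mathcal{G}_{d+1}$ that remains a $(d+1)$-set after applying $S_j$ are those with $G\setminus\{j\}\in\mathcal{G}$, equivalently with $G\setminus\{j\}\in\mathcal{G}_d(\bar{j})$; since $S_j(\mathcal{G})\subseteq\mathcal{H}(n,2d)$ contains at most one $(d+1)$-set, at most one such $G$ can exist.

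The main subcase $|\mathcal{G}_d(\bar{j})|\geq 2$ is handled by invoking the strict form of Lemma \ref{W231} when $d\geq 3$ and of Lemma \ref{HK32} when $d=2$; in both cases the strict clause is triggered by $|\mathcal{A}\cap\mathcal{G}_d(\bar{j})|\leq 1$ together with $n-1\geq 2d+1$, and yields
$$|\mathcal{A}|+|\mathcal{G}_d(\bar{j})|\leq\binom{n-1}{d}-\binom{n-d-1}{d}-\binom{n-d-2}{d-1}+1.$$
Combining this with the trivial bounds $|\mathcal{G}_d(j)|\leq\binom{n-1}{d-1}$ and $|\mathcal{G}_k|\leq\binom{n}{k}$ for $0\leq k\leq d-1$, and using Pascal's identity $\binom{n-1}{d-1}+\binom{n-1}{d}=\binom{n}{d}$, gives exactly the claimed upper bound on $|\mathcal{F}|$.

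The residual subcase $|\mathcal{G}_d(\bar{j})|\leq 1$ I would dispatch via the elementary inequality $|\mathcal{F}|\leq 2|\mathcal{G}(\bar{j})|$, which follows from the normalization $|\mathcal{G}(j)|\leq|\mathcal{G}(\bar{j})|$, combined with $|\mathcal{G}_k(\bar{j})|\leq\binom{n-1}{k}$ for $k\leq d-1$, $|\mathcal{G}_d(\bar{j})|\leq 1$, and $|\mathcal{G}_{d+1}(\bar{j})|=0$; a short telescoping using $2\sum_{k=0}^{d-1}\binom{n-1}{k}=\sum_{k=0}^{d-1}\binom{n}{k}+\binom{n-1}{d-1}$ shows that $|\mathcal{F}|$ is already strictly dominated by the target whenever $n\geq 2d+2$ and $d\geq 2$. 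The main obstacle will be certifying $|\mathcal{A}\cap\mathcal{G}_d(\bar{j})|\leq 1$, which is the sole place where the hypothesis $S_j(\mathcal{G})\subseteq\mathcal{H}(n,2d)$ enters and is precisely what promotes Lemmas \ref{W231} and \ref{HK32} from their generic $+2$ estimate to the $+1$ estimate required by the lemma.
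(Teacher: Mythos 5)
Your proposal is correct and follows essentially the same route as the paper: identify $\mathcal{G}_{d+1}$ as a star at $j$, show $|\mathcal{G}_{d+1}(j)|\ge 2$ from $\mathcal{G}\nsubseteq\mathcal{H}(n,2d)$, deduce $|\mathcal{G}_{d+1}(j)\cap\mathcal{G}_d(\bar{j})|\le 1$ from $S_j(\mathcal{G})\subseteq\mathcal{H}(n,2d)$ (the paper in fact gets equality, which it does not need), and then invoke the strict forms of Lemmas \ref{W231} and \ref{HK32} in the main subcase and the bound $|\mathcal{F}|\le 2|\mathcal{G}(\bar{j})|$ in the residual one. No gaps.
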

\begin{proof}
Since $S_j(\mathcal{G})\subseteq\mathcal{H}(n, 2d)$ and $S_j(\mathcal{G})\nsubseteq\mathcal{K}(n, 2d)$, we have $|\mathcal{G}_{d+1}(j)\cap \mathcal{G}_{d}(\bar{j})|=1$.  Furthermore, $\mathcal{G}\nsubseteq\mathcal{H}(n, 2d)$ implies that $|\mathcal{G}_{d+1}(j)|\geq 2$. We derive from $\Delta(\mathcal{G})\leq 2d$ that $\mathcal{G}_{d+1}(j)$ and $\mathcal{G}_{d}(\bar{j})$ are cross-intersecting.

If $|\mathcal{G}_{d}(\bar{j})|=1$, then
$
|\mathcal{F}|\leq 2|\mathcal{G}(\bar{j})|=2\sum_{0 \leq i \leq d}|\mathcal{G}_i(\bar{j})|\leq 2\sum_{0 \leq i \leq d-1}\binom{n-1}{i}+2=\sum_{0 \leq i \leq d-1}\binom{n}{i}+\binom{n-1}{d-1}+2,
$
which is smaller than the upper bound in Lemma \ref{c3.3}.

If $|\mathcal{G}_{d}(\bar{j})|\geq 2$, then by Lemmas \ref{W231} and \ref{HK32}, we have
$
|\mathcal{G}_d(\bar{j})|+|\mathcal{G}_{d+1}(j)|
\leq\binom{n-1}{d}-\binom{n-d-1}{d}-\binom{n-d-2}{d-1}+1.
$
By (\ref{c0}), we get
$
|\mathcal{F}|\leq\sum_{0 \leq i \leq d}\binom{n}{i}-\binom{n-d-1}{d}-\binom{n-d-2}{d-1}+1.
$
\end{proof}

Next let us consider the case  $|\mathcal{G}_{d+1}(\bar{j})|=1$.

\begin{lemma}\label{c3.4}
Suppose that  ${\rm max}\{|G|: G\in \mathcal{G}\}=d+1$ and  $|\mathcal{G}_{d+1}(\bar{j})|=1$. Then
\begin{align*}
|\mathcal{F}|\leq \sum_{0 \leq i \leq d}\binom{n}{i}-\binom{n-d-1}{d}-\binom{n-d-2}{d-1}+2.
\end{align*}
\end{lemma}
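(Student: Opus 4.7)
The plan is to extract maximal structural information from the dual hypothesis $S_j(\mathcal{G})\subseteq \mathcal{H}(n,2d)$ and $|\mathcal{G}_{d+1}(\bar{j})|=1$. Writing $\{D\}=\mathcal{G}_{d+1}(\bar{j})$, this $D$ is untouched by $S_j$ and must coincide with the distinguished $(d+1)$-set of the ambient $\mathcal{H}(n,2d)$. A direct layer-wise count of $(d+1)$-sets in $S_j(\mathcal{G})$ yields
\[
|S_{j}(\mathcal{G})_{d+1}|=|\mathcal{G}_{d+1}(\bar{j})|+|\mathcal{G}_{d+1}(j)\cap \mathcal{G}_d(\bar{j})|,
\]
and since the left-hand side is $1$, we conclude $\mathcal{G}_{d+1}(j)\cap \mathcal{G}_d(\bar{j})=\emptyset$ (as $d$-uniform families in $[n]\setminus\{j\}$); by $\Delta(\mathcal{G})\le 2d$ they are moreover cross-intersecting. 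If $|\mathcal{G}_{d+1}(j)|=0$, then $\mathcal{G}_{d+1}=\{D\}$ and every $d$-set of $\mathcal{G}$ meets $D$ (otherwise the diameter exceeds $2d$), forcing $\mathcal{G}\subseteq \mathcal{H}(n,2d)$ and contradicting the hypothesis. Hence $|\mathcal{G}_{d+1}(j)|\ge 1$.

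I would then split into three sub-cases. \textbf{(a)} If $|\mathcal{G}_d(\bar{j})|\le 1$, the standing normalization $|\mathcal{G}(i)|\le |\mathcal{G}(\bar{i})|$ gives $|\mathcal{F}|\le 2|\mathcal{G}(\bar{j})|\le 4+2\sum_{i=0}^{d-1}\binom{n-1}{i}$, which is well below the target bound. \textbf{(b)} If $|\mathcal{G}_d(\bar{j})|\ge 2$ and $|\mathcal{G}_{d+1}(j)|\ge 2$, apply the strict version of Lemma \ref{W231} (for $d\ge 3$) or Lemma \ref{HK32} (for $d=2$) to the disjoint cross-intersecting pair in $\binom{[n]\setminus\{j\}}{d}$, giving
\[
|\mathcal{G}_d(\bar{j})|+|\mathcal{G}_{d+1}(j)|\le \binom{n-1}{d}-\binom{n-d-1}{d}-\binom{n-d-2}{d-1}+1.
\]
Combined with the trivial $|\mathcal{G}_d(j)|\le \binom{n-1}{d-1}$, $|\mathcal{G}_i|\le \binom{n}{i}$ for $i\le d-1$, $|\mathcal{G}_{d+1}(\bar{j})|=1$, and Pascal's rule $\binom{n-1}{d-1}+\binom{n-1}{d}=\binom{n}{d}$, this exactly attains the target $\sum_{i=0}^{d}\binom{n}{i}-\binom{n-d-1}{d}-\binom{n-d-2}{d-1}+2$.

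Sub-case \textbf{(c)}, $|\mathcal{G}_d(\bar{j})|\ge 2$ and $|\mathcal{G}_{d+1}(j)|=1$, is the principal obstacle, since the cross-intersecting toolkit needs both sides of cardinality at least two. Writing $\mathcal{G}_{d+1}=\{D,D'\}$ with $j\in D'$, the diameter constraint and $D\ne D'$ force $1\le |D\cap D'|\le d$, hence $|D\cup D'|\ge d+2$. Every $d$-set of $\mathcal{G}$ must meet both $D$ and $D'$, and $D'\setminus\{j\}$ is excluded from $\mathcal{G}_d$ by the disjointness established in the first paragraph. A direct inclusion–exclusion therefore yields
\[
|\mathcal{G}_d|\le \binom{n}{d}-2\binom{n-d-1}{d}+\binom{n-d-2}{d}-1=\binom{n}{d}-\binom{n-d-1}{d}-\binom{n-d-2}{d-1}-1
\]
by Pascal's rule. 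Together with $|\mathcal{G}_{d+1}|=2$ and $|\mathcal{G}_i|\le \binom{n}{i}$ for $i\le d-1$, this gives $|\mathcal{F}|$ strictly below the target and finishes the proof. The key substitution is the use of the diameter-forced lower bound $|D\cap D'|\ge 1$, which controls the size of $D\cup D'$ and in turn the third term of inclusion–exclusion tightly enough to reach the desired bound.
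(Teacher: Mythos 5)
Your proposal is correct, and the opening deductions (the layer count showing $\mathcal{G}_{d+1}(j)\cap\mathcal{G}_d(\bar j)=\emptyset$, the cross-intersecting property from $\Delta(\mathcal{G})\le 2d$, the exclusion of $|\mathcal{G}_{d+1}(j)|=0$ via $\mathcal{G}\nsubseteq\mathcal{H}(n,2d)$, and sub-cases (a) and (b)) coincide with the paper's argument. The genuine divergence is sub-case (c), $|\mathcal{G}_{d+1}(j)|=1$. The paper writes $\mathcal{G}_{d+1}=\{A,B\}$, extracts a common element $x\in A\cap B$ with $x\ne j$ (so $\mathcal{G}_{d+1}(\bar x)=\emptyset$), and then branches on $|\mathcal{G}_{d+1}(x)\cap\mathcal{G}_d(\bar x)|\in\{0,1,2\}$: the value $0$ reduces to Case \ref{case2}, the value $1$ repeats the argument of Lemma \ref{c3.3}, and the value $2$ invokes Lemmas \ref{W231}/\ref{HK32} with the $+2$ constant. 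You instead bound $|\mathcal{G}_d|$ directly: every $d$-set must meet both $D$ and $D'$, $|D\cup D'|\ge d+2$ follows from $1\le|D\cap D'|\le d$, $D'\setminus\{j\}$ is excluded by the disjointness from the first paragraph, and inclusion–exclusion plus Pascal's rule give $|\mathcal{G}_d|\le\binom{n}{d}-\binom{n-d-1}{d}-\binom{n-d-2}{d-1}-1$, hence $|\mathcal{F}|$ at most one below the target. I checked the arithmetic and the applicability conditions ($n-1\ge 2d+1$ for the strict form of Lemma \ref{W231}, $n-1\ge 5$ for Lemma \ref{HK32}, and $D\cap D'\subseteq D'\setminus\{j\}$ so that $D'\setminus\{j\}$ is indeed among the sets counted by inclusion–exclusion); all of it goes through. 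Your route for this sub-case is more self-contained — it avoids the pivot to $S_x$ and the recursion into other cases/lemmas — at the cost of being specific to this configuration, whereas the paper's pivot reuses machinery it has already set up.
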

\begin{proof}
Since $S_j(\mathcal{G})\subseteq\mathcal{H}(n, 2d)$ and $S_j(\mathcal{G})\nsubseteq\mathcal{K}(n, 2d)$, we have $|\mathcal{G}_{d+1}(j)\cap \mathcal{G}_{d}(\bar{j})|=0$.  Furthermore, $\mathcal{G}\nsubseteq\mathcal{H}(n, 2d)$ implies that $|\mathcal{G}_{d+1}(j)|\geq 1$. Since  $\Delta(\mathcal{G})\leq 2d$,  we get that $\mathcal{G}_{d+1}(j)$ and $\mathcal{G}_{d}(\bar{j})$ are cross-intersecting, $\mathcal{G}_{d+1}(\bar{j})$ and $\mathcal{G}_{d+1}(j)$ are cross-intersecting.

\begin{itemize}

 \item
If $|\mathcal{G}_{d}(\bar{j})|\leq 1$, then
$
|\mathcal{F}|\leq 2|\mathcal{G}(\bar{j})|=2\sum_{0 \leq i \leq d+1}|\mathcal{G}_i(\bar{j})|\leq 2\sum_{0 \leq i \leq d-1}\binom{n-1}{i}+4
=\sum_{0 \leq i \leq d-1}\binom{n}{i}+\binom{n-1}{d-1}+4,
$
which is smaller than the upper bound in Lemma \ref{c3.4}.

\item If $|\mathcal{G}_{d+1}(j)|= 1$, then $\mathcal{G}_{d+1}= \{A, B\}$ for some $x\in A, B$ with  $x\neq j, x\in [n]$ because $\mathcal{G}_{d+1}(\bar{j})$ and $\mathcal{G}_{d+1}(j)$ are cross-intersecting. Thus  $|\mathcal{G}_{d+1}(\bar{x})|=0$.  Note that $\mathcal{G}_{d+1}(x)$ and $\mathcal{G}_{d}(\bar{x})$ are cross-intersecting.
\begin{itemize}
   \item If $|\mathcal{G}_{d+1}(x)\cap \mathcal{G}_{d}(\bar{x})|=0$, then $S_x(\mathcal{G})\subseteq\mathcal{K}(n, 2d)$. By Case \ref{case2}, we get the desired result.
   \item If $|\mathcal{G}_{d+1}(x)\cap \mathcal{G}_{d}(\bar{x})|=1$, then the same argument with Lemma \ref{c3.3} works.

      \item If  $|\mathcal{G}_{d+1}(x)\cap \mathcal{G}_{d}(\bar{x})|=2$, by Lemmas \ref{W231} and \ref{HK32}, we have
$
|\mathcal{G}_d(\bar{x})|+|\mathcal{G}_{d+1}(x)|
\leq\binom{n-1}{d}-\binom{n-d-1}{d}-\binom{n-d-2}{d-1}+2.
$
It follows that
\begin{align*}
|\mathcal{F}|=& \sum_{0 \leq i \leq d+1}|\mathcal{G}_i|\leq \sum_{0 \leq i \leq d-1}\binom{n}{i}+|\mathcal{G}_d(x)|+|\mathcal{G}_d(\bar{x})|+|\mathcal{G}_{d+1}(x)|\\
\leq&\sum_{0 \leq i \leq d-1}\binom{n}{i}+\binom{n-1}{d-1}+\binom{n-1}{d}-\binom{n-d-1}{d}-\binom{n-d-2}{d-1}+2\\
=&\sum_{0 \leq i \leq d}\binom{n}{i}-\binom{n-d-1}{d}-\binom{n-d-2}{d-1}+2.
\end{align*}
 \end{itemize}

    \item   It remains to consider the case  $|\mathcal{G}_{d}(\bar{j})|\geq 2$ and $|\mathcal{G}_{d+1}(j)|\geq 2$. Since $|\mathcal{G}_{d+1}(j)\cap \mathcal{G}_{d}(\bar{j})|=0$, by Lemmas \ref{W231} and \ref{HK32}, we have
$
|\mathcal{G}_d(\bar{j})|+|\mathcal{G}_{d+1}(j)|
\leq\binom{n-1}{d}-\binom{n-d-1}{d}-\binom{n-d-2}{d-1}+1.
$
Then the same argument as (\ref{c0}) yields the desired result.
  \end{itemize}
This completes the proof of Lemma \ref{c3.4}.
\end{proof}

{\bf Subcase 4.2.} ${\rm max}\{|G|: G\in \mathcal{G}\}\geq d+2$.

The condition $S_j(\mathcal{G})\subseteq\mathcal{H}(n, 2d)$ implies that ${\rm max}\{|G|: G\in \mathcal{G}\}= d+2$. Moreover, we have  $\mathcal{G}_{d+2}=\{G_0\}$ with $j\in G_0$ and
$G_0\backslash \{j\}\notin \mathcal{G}_{d+1}$. Clearly,  $\mathcal{G}_{d+1}=\{G: j\in G, |G|=d+1\}$ and $\mathcal{G}_{d+1}(j)\cap \mathcal{G}_{d}(\bar{j})=\emptyset$. Observe that $\mathcal{G}_{d+1}(j)$ and $\mathcal{G}_{d}(\bar{j})$ are cross-intersecting. In addition, $\mathcal{G}_{d+2}$ and $\mathcal{G}_{d}(\bar{j})$ are cross-intersecting, $\mathcal{G}_{d+2}$ and $\mathcal{G}_{d-1}$ are cross-intersecting. Then
\begin{align}\label{c7}
|\mathcal{G}_d(\bar{j})|\leq\binom{n-1}{d}-\binom{n-d-2}{d},\quad |\mathcal{G}_{d-1}|\leq\binom{n}{d-1}-\binom{n-d-2}{d-1}.
\end{align}
The proof in Subcase $4.2$ follows from the following lemma.

\begin{lemma}\label{c3.5}
Suppose that  ${\rm max}\{|G|: G\in \mathcal{G}\}=d+2$. Then
\begin{align*}
|\mathcal{F}|\leq \sum_{0 \leq i \leq d}\binom{n}{i}-\binom{n-d-1}{d}-\binom{n-d-2}{d-1}+1.
\end{align*}
\end{lemma}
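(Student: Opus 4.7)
The plan is to partition the argument according to the size of $\mathcal{G}_{d+1}$, which equals $|\mathcal{G}_{d+1}(j)|$ (all $(d{+}1)$-sets of $\mathcal{G}$ contain $j$, as recorded in the preamble of Subcase 4.2), and to treat the three ranges $|\mathcal{G}_{d+1}|\in \{0\},\{1\},\{\ge 2\}$ separately. In every case I will keep the skeleton $|\mathcal{G}_i|\le \binom{n}{i}$ for $i\le d-2$, $|\mathcal{G}_d(j)|\le \binom{n-1}{d-1}$, and the bound $|\mathcal{G}_{d-1}|\le \binom{n}{d-1}-\binom{n-d-2}{d-1}$ from (\ref{c7}); only the treatment of $|\mathcal{G}_{d+1}|+|\mathcal{G}_d(\bar j)|$ (or of $|\mathcal{G}_d|$ directly) changes.

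If $|\mathcal{G}_{d+1}|=0$ I expect to reach a contradiction rather than a counting bound. Picking any $y\in G_0$ and setting $R=G_0$, one checks $\mathcal{G}\subseteq \mathcal{R}(n,2d)$: sets of size $\le d-2$ and the set $R=G_0$ lie in $\mathcal{R}(n,2d)$ by definition, and every $F\in \mathcal{G}$ of size $d-1$ or $d$ meets $G_0$ by (\ref{c7}), so it lies in the block $\{F: y\in F\}$ if $y\in F$ and in the block $\{F: y\notin F,\, F\cap R\ne \emptyset\}$ otherwise. This contradicts the hypothesis of Case \ref{case4} that $\mathcal{G}$ is not contained in any translation of $\mathcal{R}(n,2d)$.

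If $|\mathcal{G}_{d+1}|=1$, write $\mathcal{G}_{d+1}=\{A\}$ with $j\in A$; the diameter bound $|A+G_0|\le 2d$ forces $k:=|A\cap G_0|\in[2,d+1]$. Every $F\in \mathcal{G}_d$ must then meet both $A$ (diameter with $A$) and $G_0$ (diameter with $G_0$), and moreover $A\setminus\{j\}\notin \mathcal{G}_d$ (from $\mathcal{G}_{d+1}(j)\cap \mathcal{G}_d(\bar j)=\emptyset$) even though $A\setminus\{j\}$ itself meets both $A$ and $G_0$. Inclusion--exclusion gives
\[
|\mathcal{G}_d|\le \binom{n}{d}-\binom{n-d-1}{d}-\binom{n-d-2}{d}+\binom{n-2d-3+k}{d}-1.
\]
Summing this with the skeleton and then using the Pascal identity $\binom{n-d-2}{d}+\binom{n-d-2}{d-1}=\binom{n-d-1}{d}$ together with $\binom{n-2d-3+k}{d}\le \binom{n-d-2}{d}$ (from $k\le d+1$) produces exactly the target bound.

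If $|\mathcal{G}_{d+1}|\ge 2$, I split on $|\mathcal{G}_d(\bar j)|$. When $|\mathcal{G}_d(\bar j)|\ge 2$ I apply Lemma \ref{HK32} (for $d=2$) or Lemma \ref{W231} (for $d\ge 3$) to the disjoint cross-intersecting $d$-uniform families $\mathcal{G}_{d+1}(j), \mathcal{G}_d(\bar j)$ on the $(n{-}1)$-set $[n]\setminus\{j\}$: since $n-1\ge 2d+1$ and the intersection is empty, the strict version yields $|\mathcal{G}_{d+1}(j)|+|\mathcal{G}_d(\bar j)|\le \binom{n-1}{d}-\binom{n-d-1}{d}-\binom{n-d-2}{d-1}+1$, and summing with the skeleton plus Pascal delivers $|\mathcal{F}|\le \sum_{i=0}^d\binom{n}{i}-\binom{n-d-1}{d}-2\binom{n-d-2}{d-1}+2$, which is below the target since $\binom{n-d-2}{d-1}\ge 1$. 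When $|\mathcal{G}_d(\bar j)|\le 1$, using $|\mathcal{G}_{d+1}(\bar j)|=|\mathcal{G}_{d+2}(\bar j)|=0$ and the standing assumption $|\mathcal{G}(j)|\le |\mathcal{G}(\bar j)|$, the crude estimate $|\mathcal{F}|\le 2|\mathcal{G}(\bar j)|\le 2+2\sum_{i=0}^{d-1}\binom{n-1}{i}$ falls well below the target via the identity $\sum_{i=0}^d\binom{n}{i}=2\sum_{i=0}^{d-1}\binom{n-1}{i}+\binom{n-1}{d}$. The chief obstacle is the $|\mathcal{G}_{d+1}|=0$ case: a direct cardinality bound recovers only Frankl's estimate $\sum_{i=0}^d\binom{n}{i}-\binom{n-d-1}{d}+1$, which is strictly larger than the claim, so one must exploit the structural fact that $\mathcal{R}(n,2d)$ contains no $(d{+}1)$-set together with the hypothesis $\mathcal{G}\not\subseteq \mathcal{R}(n,2d)$; a secondary delicate point is the $|\mathcal{G}_{d+1}|=1$ case, where the single exclusion $A\setminus\{j\}\notin \mathcal{G}_d$ is precisely what shifts the bound from Frankl's to the target.
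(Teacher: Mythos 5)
Your proposal is correct and follows essentially the same case decomposition as the paper's proof: the subcase $|\mathcal{G}_{d+1}(j)|=0$ is dispatched by showing $\mathcal{G}\subseteq\mathcal{R}(n,2d)$ (a contradiction), the subcase $|\mathcal{G}_{d}(\bar{j})|\leq 1$ by the crude bound $|\mathcal{F}|\leq 2|\mathcal{G}(\bar{j})|$, and the subcase $|\mathcal{G}_{d+1}(j)|\geq 2$, $|\mathcal{G}_{d}(\bar{j})|\geq 2$ by Lemmas \ref{W231} and \ref{HK32} together with \eqref{c7}. The only genuine difference is the subcase $|\mathcal{G}_{d+1}(j)|=1$: the paper applies the strict (disjoint) form of Lemma \ref{H672} to the cross-intersecting pair $\mathcal{G}_{d+1}(j)$, $\mathcal{G}_{d}(\bar{j})$ to get $|\mathcal{G}_d(\bar{j})|+|\mathcal{G}_{d+1}(j)|\leq\binom{n-1}{d}-\binom{n-d-1}{d}$, whereas you reach the equivalent estimate $|\mathcal{G}_d|+|\mathcal{G}_{d+1}|\leq\binom{n}{d}-\binom{n-d-1}{d}$ by directly counting the $d$-sets meeting both $A$ and $G_0$ and removing the excluded set $A\setminus\{j\}$ — a valid, self-contained substitute that yields the same final bound.
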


\begin{proof} We consider four cases.
\begin{itemize}
   \item If $|\mathcal{G}_{d}(\bar{j})|\leq 1$, then
$
|\mathcal{F}|\leq 2|\mathcal{G}(\bar{j})|=2\sum_{0 \leq i \leq d}|\mathcal{G}_i(\bar{j})|\leq 2\sum_{0 \leq i \leq d-1}\binom{n-1}{i}+2
=\sum_{0 \leq i \leq d-1}\binom{n}{i}+\binom{n-1}{d-1}+2,
$
which is smaller than the required upper bound.

\item If $|\mathcal{G}_{d+1}(j)|=0$, then $\mathcal{G}\subseteq\mathcal{R}^*(n, 2d)$, a contradiction.

  \item If $|\mathcal{G}_{d+1}(j)|=1$, since $\mathcal{G}_{d+1}(j)$ and $\mathcal{G}_{d}(\bar{j})$ are cross-intersecting and $\mathcal{G}_{d+1}(j)\cap \mathcal{G}_{d}(\bar{j})=\emptyset$, then by Lemma \ref{H672}, we have
$
|\mathcal{G}_d(\bar{j})|+|\mathcal{G}_{d+1}(j)|
\leq\binom{n-1}{d}-\binom{n-d-1}{d}.
$
This, together with $|\mathcal{G}_{d-1}|\leq\binom{n}{d-1}-\binom{n-d-2}{d-1}$ and $|\mathcal{G}_{d}(j)|\leq \binom{n-1}{d-1}$, implies
\begin{align*}
|\mathcal{F}|=& \sum_{0 \leq i \leq d+1}|\mathcal{G}_i|+1\leq \sum_{0 \leq i \leq d-2}\binom{n}{i}+|\mathcal{G}_{d-1}|+|\mathcal{G}_{d}(j)|+|\mathcal{G}_{d}(\bar{j})|+|\mathcal{G}_{d+1}(j)|+1\\
\leq&\sum_{0 \leq i \leq d-2}\binom{n}{i}+\binom{n}{d-1}-\binom{n-d-2}{d-1}+\binom{n-1}{d-1}+\binom{n-1}{d}-\binom{n-d-1}{d}+1\\
=&\sum_{0 \leq i \leq d}\binom{n}{i}-\binom{n-d-1}{d}-\binom{n-d-2}{d-1}+1.
\end{align*}

  \item  If $|\mathcal{G}_{d+1}(j)|\geq 2$ and $\mathcal{G}_{d}(\bar{j})\geq 2$, then by Lemmas \ref{W231} and \ref{HK32}, we have
$
|\mathcal{G}_d(\bar{j})|+|\mathcal{G}_{d+1}(j)|
\leq\binom{n-1}{d}-\binom{n-d-1}{d}-\binom{n-d-2}{d-1}+1.
$
This, together with (\ref{c7}), implies
\begin{align*}
|\mathcal{F}|=& \sum_{0 \leq i \leq d+2}|\mathcal{G}_i|\leq \sum_{0 \leq i \leq d-1}\binom{n}{i}-\binom{n-d-2}{d-1}+|\mathcal{G}_d(j)|+|\mathcal{G}_d(\bar{j})|+|\mathcal{G}_{d+1}(j)|+1\\
\leq&\sum_{0 \leq i \leq d-1}\binom{n}{i}+\binom{n-1}{d-1}+\binom{n-1}{d}-\binom{n-d-1}{d}-2\binom{n-d-2}{d-1}+2\\
=&\sum_{0 \leq i \leq d}\binom{n}{i}-\binom{n-d-1}{d}-2\binom{n-d-2}{d-1}+2<\sum_{0 \leq i \leq d}\binom{n}{i}-\binom{n-d-1}{d}-\binom{n-d-2}{d-1}+1.
\end{align*}
   \end{itemize}
This completes the proof of Lemma \ref{c3.5}.
\end{proof}

By Subcases $4.1$ and $4.2$, we complete the proof in  Case \ref{case4}.
$\hfill \square$

\begin{casebox}
\begin{case}\label{case5}
 Suppose that  $s=2d+1$, $\mathcal{F}$ is not a complex and
there is a family $\mathcal{G}$ obtained from $\mathcal{F}$ by repeated down-shift operations satisfying $\mathcal{G}\nsubseteq\mathcal{K}(n, 2d+1), \mathcal{H}(n, 2d+1)$,  in addition, $\mathcal{G}\nsubseteq\mathcal{T}^*(n, 5)$ if $s=5$, but $S_j(\mathcal{G})\subseteq\mathcal{H}(n, 2d+1)$ for some $j \in [n]$.
\end{case}
\end{casebox}

\noindent{\bf Proof in Case \ref{case5}.}
Since $\mathcal{G}\nsubseteq\mathcal{K}(n, 2d+1)$, we must have
${\rm max}\{|G|: G\in \mathcal{G}\}\geq d+1$.
For the case ${\rm max}\{|G|: G\in \mathcal{G}\}= d+1$, the proof is simply the same as that of Subcase $3.1$.
Now assume that ${\rm max}\{|G|: G\in \mathcal{G}\}\geq d+2$. Then $S_j(\mathcal{G})\subseteq\mathcal{H}(n, 2d+1)$ implies that ${\rm max}\{|G|: G\in \mathcal{G}\}= d+2$ and $|\mathcal{G}_{d+1}(\bar{y},\bar{j})|\leq 1$.

If $j= y$, then $|\mathcal{G}_{d+2}|=|\mathcal{G}_{d+2}(y)|=1$ and $|\mathcal{G}_{d+1}|=|\mathcal{G}_{d+1}(y)|$. Observe that $\Delta(\mathcal{G})\leq 2d+1$ implies that
$\mathcal{G}_{d+2}(y)$ and  $\mathcal{G}_{d}(\bar{y})$ are cross-intersecting.
It follows that
$
|\mathcal{G}_{d}(\bar{y})|\leq\binom{n-1}{d}-\binom{n-d-2}{d}.
$
Therefore, we have
\begin{align*}
|\mathcal{F}|\leq& 2|\mathcal{G}(\bar{y})|=2\sum_{0 \leq i \leq d-1}|\mathcal{G}_i(\bar{y})|+2|\mathcal{G}_{d}(\bar{y})| \leq 2\sum_{0 \leq i \leq d-1}\binom{n-1}{i}+2 \left(\binom{n-1}{d}-\binom{n-d-2}{d}\right)\\
=&\sum_{0 \leq i \leq d}\binom{n}{i}+\binom{n-1}{d}-2\binom{n-d-2}{d}< \sum_{0 \leq i \leq d}\binom{n}{i}+\binom{n-1}{d}-\binom{n-d-2}{d}-\binom{n-d-3}{d-1}+2.
\end{align*}
In particular, if $d=1$, then $|\mathcal{F}|\leq 6$.

So we may assume that $j\neq y$ in the following.  For any $G\in \mathcal{G}_{d+2}$, we have $j\in G$,  $G\backslash \{j\}\notin \mathcal{G}$ and $G\backslash \{j\}\in S_j(\mathcal{G})$. Then $ \mathcal{G}_{d+2}(j)\cap \mathcal{G}_{d+1}(\bar{j})=\emptyset$.
 Moreover, $\Delta(\mathcal{G})\leq 2d+1$ implies that
$\mathcal{G}_{d+2}(y,j)$ and  $\mathcal{G}_{d}(\bar{y},\bar{j})$ are cross-intersecting, $\mathcal{G}_{d+1}(y,\bar{j})$ and  $\mathcal{G}_{d+1}(\bar{y},j)$ are cross-intersecting,  $\mathcal{G}_{d+2}(\bar{y},j)$ and  $\mathcal{G}_{d+1}(y,\bar{j})$ are cross-intersecting, $\mathcal{G}_{d+2}(\bar{y},j)$ and  $\mathcal{G}_{d}(\bar{j})$ are cross-intersecting.\vspace{2mm}

Recall that $|\mathcal{G}_{d+1}(\bar{y},\bar{j})|\leq 1$.
 \underline{Firstly, let us turn to the case  $|\mathcal{G}_{d+1}(\bar{y},\bar{j})|=0$}. {\bf We complete this case by proving the next two lemmas (Lemmas \ref{c3.6} and \ref{c3.7}).}

 If $|\mathcal{G}_{d+2}(\bar{y},j)|\neq 0$, then $|\mathcal{G}_{d+2}(\bar{y},j)|=1$ and $\mathcal{G}_{d+1}(\bar{y},j)\cap \mathcal{G}_{d}(\bar{y},\bar{j})=\emptyset$ otherwise $S_j(\mathcal{G})\nsubseteq\mathcal{H}(n, 2d+1)$.

\begin{lemma}\label{c3.6}
Suppose that $|\mathcal{G}_{d+1}(\bar{y},\bar{j})|=0$, $|\mathcal{G}_{d+2}(\bar{y},j)|=1$.
Then
\begin{align*}
|\mathcal{F}|\leq \sum_{0 \leq i \leq d}\binom{n}{i}+\binom{n-1}{d}-\binom{n-d-2}{d}-\binom{n-d-3}{d}+1.
\end{align*}
In particular, if $d=1$, then $|\mathcal{F}|\leq 8$.
\end{lemma}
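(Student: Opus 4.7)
The plan is to fully exploit the unique $(d+2)$-element set $H\in\mathcal{G}_{d+2}$ with $j\in H$ and $y\notin H$ produced by the hypothesis $|\mathcal{G}_{d+2}(\bar{y},j)|=1$, since $H$ controls nearly every other subfamily of $\mathcal{G}$. First I would record the structural simplifications implied by the hypotheses: $S_j(\mathcal{G})\subseteq\mathcal{H}(n,2d+1)$ forces every set of size $d+2$ in $\mathcal{G}$ to contain $j$, so $\mathcal{G}_{d+2}(\bar{j})=\emptyset$, and together with $\mathcal{G}_{d+1}(\bar{y},\bar{j})=\emptyset$ this yields $\mathcal{G}_{d+1}(\bar{j})=\mathcal{G}_{d+1}(y,\bar{j})$. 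Next I would convert each cross-intersecting relation already recorded in the preamble into an explicit single-family bound driven by $H$. Since $\Delta(\mathcal{G})\le 2d+1$, the family $\mathcal{G}_{d+2}$ is $2$-intersecting (because $|F+F'|=2(d+2)-2|F\cap F'|\le 2d+1$ forces $|F\cap F'|\ge 2$); combined with $H\in\mathcal{G}_{d+2}$, this forces every $F\in\mathcal{G}_{d+2}(y,j)$, after removing $\{y,j\}$, to meet the $(d+1)$-subset $H\setminus\{j\}$ of $[n]\setminus\{y,j\}$. The analogous reasoning applied to the cross-intersecting pairs $(\{H\},\mathcal{G}_{d+1}(y,\bar{j}))$, $(\{H\},\mathcal{G}_d(y,\bar{j}))$ and $(\{H\},\mathcal{G}_d(\bar{y},\bar{j}))$ yields, using the shorthand $a=|\mathcal{G}_d(y,\bar{j})|$, $b=|\mathcal{G}_d(\bar{y},\bar{j})|$, $c=|\mathcal{G}_d(j)|$, $e=|\mathcal{G}_{d+1}(y,j)|$, $f=|\mathcal{G}_{d+1}(\bar{y},j)|$, $g=|\mathcal{G}_{d+1}(y,\bar{j})|$, $m=|\mathcal{G}_{d+2}(y,j)|$,
\[
a\le\binom{n-2}{d-1}-\binom{n-d-3}{d-1},\qquad b,\,g,\,m\le\binom{n-2}{d}-\binom{n-d-3}{d},\qquad c\le\binom{n-1}{d-1},\qquad e\le\binom{n-2}{d-1}.
\]
Telescoping via $\binom{n-d-3}{d-1}+\binom{n-d-3}{d}=\binom{n-d-2}{d}$ and $\binom{n-2}{d-1}+\binom{n-2}{d}=\binom{n-1}{d}$ gives the compact consequence $a+b+c\le\binom{n}{d}-\binom{n-d-2}{d}$.

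The second ingredient is a pair of Hilton--Milner bounds for cross-intersecting families obtained from Lemma \ref{H672}. Regarding $\mathcal{G}_{d+1}(y,\bar{j})$ and $\mathcal{G}_{d+1}(\bar{y},j)$ as families of $d$-subsets of the $(n-2)$-element set $[n]\setminus\{y,j\}$, and similarly $\mathcal{G}_{d+2}(y,j)$ and $\mathcal{G}_d(\bar{y},\bar{j})$, whenever both members of a pair are nonempty the lemma supplies
\[
f+g\le\binom{n-2}{d}-\binom{n-d-2}{d}+1,\qquad b+m\le\binom{n-2}{d}-\binom{n-d-2}{d}+1.
\]
The proof then splits on whether $f,g,b,m$ vanish. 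In the principal case $f,g,b,m\ge 1$, I would sum the two cross-intersection bounds with $a+c\le\binom{n-2}{d-1}-\binom{n-d-3}{d-1}+\binom{n-1}{d-1}$ and the trivial $e\le\binom{n-2}{d-1}$; grouping via $2\binom{n-2}{d-1}+2\binom{n-2}{d}=2\binom{n-1}{d}$ and $2\binom{n-1}{d}+\binom{n-1}{d-1}=\binom{n}{d}+\binom{n-1}{d}$ shows that the total lies within $2-2\binom{n-d-3}{d-1}$ of the asserted bound, which is non-positive since $n\ge 2d+3$ (with equality only at $d=1$). When $g=0$, every set of size at least $d+1$ in $\mathcal{G}$ contains $j$, so the averaging inequality $|\mathcal{F}|\le 2|\mathcal{G}(\bar{j})|$ combined with $|\mathcal{G}_d(\bar{j})|\le\binom{n-1}{d}-\binom{n-d-2}{d}$ (a consequence of the $H$-bounds on $a$ and $b$) lands strictly below the target. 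The remaining sub-cases ($f=0$ with $g\ge 1$, or $b=0$, or $m=0$) are handled in the same spirit by replacing a broken Hilton--Milner bound with the stronger $H$-derived single-family bound.

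The main technical obstacle is the delicate arithmetic in the principal case: the two Hilton--Milner slacks of $+1$ and the several $H$-derived losses of shape $\binom{n-d-3}{\cdot}$ must combine through Pascal's identity to leave behind precisely the target correction $-\binom{n-d-2}{d}-\binom{n-d-3}{d}+1$, with no room to waste. For the claim $|\mathcal{F}|\le 8$ at $d=1$, each binomial collapses ($\binom{n-3}{0}=\binom{n-4}{0}=1$, $\binom{n-1}{1}=n-1$, $\binom{n-3}{1}=n-3$, $\binom{n-4}{1}=n-4$), and the same calculation reduces to $1+n+(n-1)-(n-3)-(n-4)+1=8$, matching the asserted bound.
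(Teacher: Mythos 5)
Your proposal is correct and follows essentially the same route as the paper: the same decomposition into the subfamilies $\mathcal{G}_k(\cdot,\cdot)$, the same single-family bounds derived from cross-intersection with the unique set $H\in\mathcal{G}_{d+2}$ with $j\in H$, $y\notin H$, the same two applications of Lemma \ref{H672} to the pairs $(\mathcal{G}_{d+1}(y,\bar{j}),\mathcal{G}_{d+1}(\bar{y},j))$ and $(\mathcal{G}_{d+2}(y,j),\mathcal{G}_{d}(\bar{y},\bar{j}))$, and the same $|\mathcal{F}|\le 2|\mathcal{G}(\bar{j})|$ averaging when $\mathcal{G}_{d+1}(\bar{j})=\emptyset$. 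The only difference is organizational (you split directly on which of $f,g,b,m$ vanish rather than first on whether $\mathcal{G}_{d+2}(y,j)=\emptyset$), and your arithmetic in each branch, including the tight case $f=m=0$ that attains the stated bound, checks out.
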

\begin{proof}
\textbf{First let us consider the case $\mathcal{G}_{d+2}(y, j)= \emptyset$}. Then $|\mathcal{G}_{d+2}|=|\mathcal{G}_{d+2}(\bar{y}, j)|=1$. Since $\mathcal{G}_{d+2}(\bar{y},j)$ and  $\mathcal{G}_{d}(\bar{j})$ are cross-intersecting, we have
\begin{align}\label{c8}
|\mathcal{G}_{d}(\bar{j})|\leq\binom{n-1}{d}-\binom{n-d-2}{d}.
\end{align}
It is obvious that
\begin{align}\label{c9}
|\mathcal{G}_{d+1}(y,j)|\leq \binom{n-2}{d-1}, \quad |\mathcal{G}_{d}(j)|\leq \binom{n-1}{d-1}.
\end{align}

\begin{itemize}

   \item
If both $\mathcal{G}_{d+1}(y,\bar{j})$ and  $\mathcal{G}_{d+1}(\bar{y},j)$ are non-empty, then applying Lemma \ref{H672}, it yields
$
|\mathcal{G}_{d+1}(y,\bar{j})|+|\mathcal{G}_{d+1}(\bar{y},j)|\leq\binom{n-2}{d}-\binom{n-d-2}{d}+1.
$
It follows that
\begin{align*}
|\mathcal{F}|\leq& \sum_{0 \leq i \leq d-1}\binom{n}{i}+|\mathcal{G}_d(j)|+|\mathcal{G}_d(\bar{j})|+|\mathcal{G}_{d+1}(y,j)|+|\mathcal{G}_{d+1}(y,\bar{j})|+|\mathcal{G}_{d+1}(\bar{y},j)|+1\\
\leq&\sum_{0 \leq i \leq d-1}\binom{n}{i}+\binom{n-1}{d-1}+\binom{n-1}{d}-\binom{n-d-2}{d}+\binom{n-2}{d-1}+\binom{n-2}{d}-\binom{n-d-2}{d}+2\\
=&\sum_{0 \leq i \leq d}\binom{n}{i}+\binom{n-1}{d}-2\binom{n-d-2}{d}+2,
\end{align*}
which is smaller than the required upper bound.

  \item   If  $\mathcal{G}_{d+1}(y,\bar{j})=\emptyset$, then $\mathcal{G}_{d+1}(\bar{j})=\emptyset$. Combining this with $\mathcal{G}_{d+2}(\bar{j})=\emptyset$ and (\ref{c8}), using the same argument as (\ref{c3}), it yields
$$
|\mathcal{F}|\leq 2|\mathcal{G}(\bar{j})|
\leq\sum_{0 \leq i \leq d}\binom{n}{i}+\binom{n-1}{d}-2\binom{n-d-2}{d}.
$$

  \item   If  $\mathcal{G}_{d+1}(\bar{y},j)=\emptyset$, since $\mathcal{G}_{d+2}(\bar{y},j)$ and  $\mathcal{G}_{d+1}(y,\bar{j})$ are also cross-intersecting, we get
$
|\mathcal{G}_{d+1}(y,\bar{j})|\leq\binom{n-2}{d}-\binom{n-d-3}{d}.
$
This, together with (\ref{c8}) and (\ref{c9}), implies 
\begin{align*}
|\mathcal{F}|\leq& \sum_{0 \leq i \leq d-1}\binom{n}{i}+|\mathcal{G}_d(j)|+|\mathcal{G}_d(\bar{j})|+|\mathcal{G}_{d+1}(y,j)|+|\mathcal{G}_{d+1}(y,\bar{j})|+1\\
\leq&\sum_{0 \leq i \leq d-1}\binom{n}{i}+\binom{n-1}{d-1}+\binom{n-1}{d}-\binom{n-d-2}{d}+\binom{n-2}{d-1}+\binom{n-2}{d}-\binom{n-d-3}{d}+1\\
=&\sum_{0 \leq i \leq d}\binom{n}{i}+\binom{n-1}{d}-\binom{n-d-2}{d}-\binom{n-d-3}{d}+1.
\end{align*}
\end{itemize}

\textbf{Next let us consider the case $\mathcal{G}_{d+2}(y, j)\neq \emptyset$}.
Then $|\mathcal{G}_{d+2}|=|\mathcal{G}_{d+2}(y, j)|+1$ and $\mathcal{G}_{d+2}(\bar{j})=\emptyset$.
Since $\mathcal{G}_{d+2}(\bar{y},j)$ and  $\mathcal{G}_{d}(y, \bar{j})$ are cross-intersecting, we have
$
|\mathcal{G}_{d}(y, \bar{j})|\leq\binom{n-2}{d-1}-\binom{n-d-3}{d-1}.
$
Note that
$
|\mathcal{G}_{d+1}(\bar{y},\bar{j})|=0, |\mathcal{G}_{d+1}(y,j)|\leq \binom{n-2}{d-1},  |\mathcal{G}_{d}(j)|\leq \binom{n-1}{d-1}
$.

\underline{For the case  $\mathcal{G}_{d}(\bar{y},\bar{j})\neq \emptyset$}, since $\mathcal{G}_{d+2}(y, j)\neq \emptyset$ and $n\geq 2d+3$,  by Lemma \ref{H672}, we have
\begin{align*}
|\mathcal{G}_{d+2}(y,j)|+|\mathcal{G}_{d}(\bar{y},\bar{j})|\leq\binom{n-2}{d}-\binom{n-d-2}{d}+1.
\end{align*}

\begin{itemize}
  \item
If  $\mathcal{G}_{d+1}(y,\bar{j})$ and  $\mathcal{G}_{d+1}(\bar{y},j)$ are both non-empty, then applying Lemma \ref{H672} again, it yields
$
|\mathcal{G}_{d+1}(y,\bar{j})|+|\mathcal{G}_{d+1}(\bar{y},j)|\leq\binom{n-2}{d}-\binom{n-d-2}{d}+1.
$
It follows that
\begin{align*}
|\mathcal{F}|\leq& \sum_{0 \leq i \leq d-1}\binom{n}{i}+|\mathcal{G}_d(j)|+|\mathcal{G}_d(y,\bar{j})|+|\mathcal{G}_{d+2}(y,j)|+|\mathcal{G}_{d}(\bar{y},\bar{j})|\\
&+|\mathcal{G}_{d+1}(y,j)|+|\mathcal{G}_{d+1}(y,\bar{j})|+|\mathcal{G}_{d+1}(\bar{y},j)|+1\\
\leq&\sum_{0 \leq i \leq d-1}\binom{n}{i}+\binom{n-1}{d-1}+\binom{n-2}{d-1}-\binom{n-d-3}{d-1}+\binom{n-2}{d}-\binom{n-d-2}{d}+1\\
&+\binom{n-2}{d-1}+\binom{n-2}{d}-\binom{n-d-2}{d}+1+1\\
=&\sum_{0 \leq i \leq d}\binom{n}{i}+\binom{n-1}{d}-2\binom{n-d-2}{d}-\binom{n-d-3}{d-1}+3,
\end{align*}
which is smaller than the required upper bound.

  \item
If  $\mathcal{G}_{d+1}(y,\bar{j})=\emptyset$, then $\mathcal{G}_{d+1}(\bar{j})=\emptyset$. Note that
$
|\mathcal{G}_{d}(\bar{y},\bar{j})|\leq\binom{n-2}{d}-\binom{n-d-2}{d}.
$
So we have
\begin{align*}
|\mathcal{G}_{d}(\bar{j})|=&|\mathcal{G}_{d}(y, \bar{j})|+|\mathcal{G}_{d}(\bar{y},\bar{j})|\leq \binom{n-2}{d-1}-\binom{n-d-3}{d-1}+\binom{n-2}{d}-\binom{n-d-2}{d}\\
=&\binom{n-1}{d}-\binom{n-d-2}{d}-\binom{n-d-3}{d-1}.
\end{align*}
Since $\mathcal{G}_{d+2}(\bar{j})=\emptyset$, we further get
\begin{align*}
|\mathcal{F}|\leq& 2|\mathcal{G}(\bar{j})|=2\sum_{0 \leq i \leq d-1}|\mathcal{G}_i(\bar{j})|+2|\mathcal{G}_{d}(\bar{j})| \\
\leq& 2\sum_{0 \leq i \leq d-1}\binom{n-1}{i}+2 \left(\binom{n-1}{d}-\binom{n-d-2}{d}-\binom{n-d-3}{d-1}\right)\\
=&\sum_{0 \leq i \leq d}\binom{n}{i}+\binom{n-1}{d}-2\binom{n-d-2}{d}-2\binom{n-d-3}{d-1},
\end{align*}
which is smaller than the required upper bound.

  \item
If  $\mathcal{G}_{d+1}(\bar{y},j)=\emptyset$, then $\mathcal{G}_{d+1}(\bar{y})=\emptyset$. Since
$
|\mathcal{G}_{d}(\bar{y},\bar{j})|\leq\binom{n-2}{d}-\binom{n-d-2}{d}
$
and $|\mathcal{G}_{d}(\bar{y},j)|\leq \binom{n-2}{d-1}$, we have
\begin{align*}
|\mathcal{G}_{d}(\bar{y})|\leq\binom{n-2}{d}+\binom{n-2}{d-1}-\binom{n-d-2}{d}=\binom{n-1}{d}-\binom{n-d-2}{d}.
\end{align*}
Since $|\mathcal{G}_{d+2}(\bar{y})|=1$, using the similar argument as (\ref{c3}), we obtain
\begin{align*}
|\mathcal{F}|\leq& 2|\mathcal{G}(\bar{y})|
\leq \sum_{0 \leq i \leq d}\binom{n}{i}+\binom{n-1}{d}-2\binom{n-d-2}{d}+2,
\end{align*}
which is smaller than the required upper bound.

\end{itemize}

\underline{For the case $\mathcal{G}_{d}(\bar{y},\bar{j})= \emptyset$}, recall that $|\mathcal{G}_{d}(y, \bar{j})|\leq\binom{n-2}{d-1}-\binom{n-d-3}{d-1}
$.
By a similar argument as in Lemma \ref{c3.2}, we obtain
\begin{align*}
|\mathcal{F}|\leq {\rm max} &\left\{\sum_{0 \leq i \leq d}\binom{n}{i}+\binom{n-1}{d}-2\binom{n-d-2}{d}-2\binom{n-d-3}{d-1}, \sum_{0 \leq i \leq d-1}\binom{n}{i}+\binom{n-1}{d-1}+\right.\\
&\left.2 \binom{n-2}{d-1}-2\binom{n-d-3}{d-1}, \sum_{0 \leq i \leq d-1}\binom{n}{i}+\binom{n-1}{d-1}+2 \binom{n-2}{d-1}+2\right\}\\
=&\begin{cases}\sum_{0 \leq i \leq d}\binom{n}{i}+\binom{n-1}{d}-2\binom{n-d-2}{d}-2\binom{n-d-3}{d-1} & d\geq 2, \\ 6 & d=1.\end{cases}
\end{align*}
This is smaller than the required upper bound. So we complete the proof of Lemma \ref{c3.6}.
\end{proof}

\begin{remark}
Let $j\neq y\in [n]$. For $d\geq 1$,  let $D\in \binom{[n]}{d+2}$ with $j\in D, y\notin D$. Considering the following family:
\begin{align*}
\mathcal{Q}(n, 2 d+1)=&\left\{F\subseteq [n]: |F|\leq d-1\right\}\cup\{D\}\cup\left\{F\in \binom{[n]}{d}: j\in F\right\}\cup\left\{F\in \binom{[n]}{d}: j\notin F, F \cap D\neq \emptyset\right\}\\
&\cup\left\{F\in \binom{[n]}{d+1}: y, j\in F\right\}
\cup\left\{F\in \binom{[n]}{d+1}: y\in F, j\notin F, F \cap D\neq \emptyset\right\}.
\end{align*}
In Lemma \ref{c3.6}, $|\mathcal{F}|$ can be maximized by a translation of $\mathcal{Q}(n, 2 d+1)$.
\end{remark}

\begin{lemma}\label{c3.7}
Suppose that $|\mathcal{G}_{d+1}(\bar{y},\bar{j})|=0$, $|\mathcal{G}_{d+2}(\bar{y},j)|=0$. Then
\begin{align*}
|\mathcal{F}|\leq \sum_{0 \leq i \leq d}\binom{n}{i}+\binom{n-1}{d}-2\binom{n-d-2}{d}+2.
\end{align*}
In particular, if $d=1$, then $|\mathcal{F}|\leq 8$.
\end{lemma}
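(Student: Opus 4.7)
The plan is to follow the playbook established in Lemma~\ref{c3.1} and Lemma~\ref{c3.6}: combine the two cross-intersecting bounds coming from Lemma~\ref{H672} with the trivial layer bounds, then either conclude directly (the main subcase) or reduce to the doubling inequality $|\mathcal{F}|\leq 2|\mathcal{G}(\bar j)|$ or $|\mathcal{F}|\leq 2|\mathcal{G}(\bar y)|$ (the two edge subcases).

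First I would record the structural consequences of the assumptions. Since every $(d+2)$-set of $\mathcal{G}$ must contain $j$ and $|\mathcal{G}_{d+2}(\bar y,j)|=0$, we have $\mathcal{G}_{d+2}=\mathcal{G}_{d+2}(y,j)\neq\emptyset$, so in particular $\mathcal{G}_{d+2}(\bar j)=\emptyset$. Likewise, $|\mathcal{G}_{d+1}(\bar y,\bar j)|=0$ forces $\mathcal{G}_{d+1}=\mathcal{G}_{d+1}(y,j)\cup\mathcal{G}_{d+1}(y,\bar j)\cup\mathcal{G}_{d+1}(\bar y,j)$. Viewing $\mathcal{G}_{d+2}(y,j)$ and $\mathcal{G}_d(\bar y,\bar j)$ as $d$-uniform families on the $(n-2)$-element ground set $[n]\setminus\{y,j\}$, the condition $\Delta(\mathcal{G})\leq 2d+1$ makes them cross-intersecting; the same is true of $\mathcal{G}_{d+1}(y,\bar j)$ and $\mathcal{G}_{d+1}(\bar y,j)$ (after removing $y$ and $j$ respectively).

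In the main subcase, where $\mathcal{G}_{d+1}(y,\bar j)$ and $\mathcal{G}_{d+1}(\bar y,j)$ are both nonempty, Lemma~\ref{H672} gives
\[
|\mathcal{G}_{d+2}(y,j)|+|\mathcal{G}_d(\bar y,\bar j)|\leq \binom{n-2}{d}-\binom{n-d-2}{d}+1,
\]
\[
|\mathcal{G}_{d+1}(y,\bar j)|+|\mathcal{G}_{d+1}(\bar y,j)|\leq \binom{n-2}{d}-\binom{n-d-2}{d}+1,
\]
(the first uses that $\mathcal{G}_{d+2}(y,j)\neq\emptyset$, so the case $\mathcal{G}_d(\bar y,\bar j)=\emptyset$ is even easier). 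Combining these with the trivial bounds $|\mathcal{G}_d(j)|\leq\binom{n-1}{d-1}$, $|\mathcal{G}_d(y,\bar j)|\leq\binom{n-2}{d-1}$, and $|\mathcal{G}_{d+1}(y,j)|\leq\binom{n-2}{d-1}$ and summing over layers gives
\[
|\mathcal{F}|\leq \sum_{0\leq i\leq d-1}\binom{n}{i}+\binom{n-1}{d-1}+2\binom{n-2}{d-1}+2\binom{n-2}{d}-2\binom{n-d-2}{d}+2,
\]
which collapses to the advertised bound via $\binom{n-2}{d-1}+\binom{n-2}{d}=\binom{n-1}{d}$ and the identity $2\sum_{i\leq d-1}\binom{n-1}{i}+2\binom{n-1}{d}=\sum_{i\leq d}\binom{n}{i}+\binom{n-1}{d}$.

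The edge subcases, when one of $\mathcal{G}_{d+1}(y,\bar j)$ or $\mathcal{G}_{d+1}(\bar y,j)$ is empty, are even cleaner. If $\mathcal{G}_{d+1}(y,\bar j)=\emptyset$, then $\mathcal{G}_{d+1}(\bar j)=\emptyset$ and $\mathcal{G}_{d+2}(\bar j)=\emptyset$; the bound $|\mathcal{G}_d(\bar j)|\leq\binom{n-1}{d}-\binom{n-d-2}{d}$ follows from the first cross-intersecting inequality and $|\mathcal{G}_d(y,\bar j)|\leq\binom{n-2}{d-1}$. Plugging into $|\mathcal{F}|\leq 2|\mathcal{G}(\bar j)|$ and using the same Pascal identity yields a strictly smaller bound. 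The case $\mathcal{G}_{d+1}(\bar y,j)=\emptyset$ is symmetric, now using $|\mathcal{F}|\leq 2|\mathcal{G}(\bar y)|$. Specializing $d=1$ in the main bound evaluates to $1+n+(n-1)-2(n-3)+2=8$. The main obstacle is bookkeeping: one has to verify in each case that the two participating families live on $\binom{[n]\setminus\{y,j\}}{d}$ and are genuinely cross-intersecting under the diameter hypothesis, then check that no layer has been double-counted when the bounds are assembled.
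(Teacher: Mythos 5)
Your overall route coincides with the paper's: the paper proves this lemma by observing that its hypotheses force $\mathcal{G}_{d}(\bar{y},\bar{j})\neq\emptyset$ and then simply citing Lemma \ref{c3.1}, whose proof is exactly the three-way analysis you reproduce (two applications of Lemma \ref{H672} plus trivial layer bounds in the main subcase, and the doubling bounds $|\mathcal{F}|\le 2|\mathcal{G}(\bar{j})|$, $|\mathcal{F}|\le 2|\mathcal{G}(\bar{y})|$ in the two edge subcases). Your arithmetic in the main subcase and your treatment of the two edge subcases are correct.

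The gap is the parenthetical dismissal of the possibility $\mathcal{G}_{d}(\bar{y},\bar{j})=\emptyset$ inside the main subcase as ``even easier.'' It is not: if $\mathcal{G}_{d}(\bar{y},\bar{j})=\emptyset$, Lemma \ref{H672} no longer applies to the pair $\mathcal{G}_{d+2}(y,j)$, $\mathcal{G}_{d}(\bar{y},\bar{j})$, and the diameter hypothesis alone does not bound $|\mathcal{G}_{d+2}(y,j)|$ below $\binom{n-2}{d}$ (any two $(d+2)$-sets through $y$ and $j$ already share two elements, so their symmetric difference is at most $2d$). Substituting the trivial bound $|\mathcal{G}_{d+2}(y,j)|\le\binom{n-2}{d}$ into your layer sum gives $\sum_{0\le i\le d}\binom{n}{i}+\binom{n-1}{d}-\binom{n-d-2}{d}+1$, which exceeds the target by $\binom{n-d-2}{d}-1\ge d>0$. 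You must close this in one of two ways. Either (a) prove, as the paper does, that this case cannot occur: since $S_j(\mathcal{G})\subseteq\mathcal{H}(n,2d+1)$ but $S_j(\mathcal{G})\nsubseteq\mathcal{K}(n,2d+1)$, and every $G\in\mathcal{G}_{d+2}$ contains both $y$ and $j$ (so $S_j(G)=G\setminus\{j\}$ still contains $y$), the unique $(d+1)$-set of $\mathcal{H}(n,2d+1)$ avoiding $y$ must be a fixed point of $S_j$ coming from a $(d+1)$-set $G$ with $j\in G$, $y\notin G$ and $G\setminus\{j\}\in\mathcal{G}$; this forces $|\mathcal{G}_{d+1}(\bar{y},j)\cap\mathcal{G}_{d}(\bar{y},\bar{j})|=1$ and in particular $\mathcal{G}_{d}(\bar{y},\bar{j})\neq\emptyset$, after which one may just invoke Lemma \ref{c3.1}. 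Or (b) handle the case by the doubling argument of Lemma \ref{c3.2}: with $\mathcal{G}_{d}(\bar{j})=\mathcal{G}_{d}(y,\bar{j})$, $\mathcal{G}_{d+1}(\bar{j})=\mathcal{G}_{d+1}(y,\bar{j})$ and $\mathcal{G}_{d+2}(\bar{j})=\emptyset$, one gets $|\mathcal{F}|\le 2|\mathcal{G}(\bar{j})|\le\sum_{0\le i\le d}\binom{n}{i}+\binom{n-1}{d}-2\binom{n-d-2}{d}$. Either patch completes the proof; as written, the step fails.
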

\begin{proof}
Since $|\mathcal{G}_{d+2}(\bar{y},j)|=0$, we have $j, y\in G$ for any $G\in \mathcal{G}_{d+2}$.  So $|\mathcal{G}_{d+2}(y, j)|=|\mathcal{G}_{d+2}|\neq 0$. In addition, we have $|\mathcal{G}_{d+1}(\bar{y},j)|\geq 1$ and $|\mathcal{G}_{d+1}(\bar{y},j)\cap \mathcal{G}_{d}(\bar{y},\bar{j})|=1$. This implies that $\mathcal{G}_{d}(\bar{y},\bar{j})\neq \emptyset$.
Then the result follows from Lemma \ref{c3.1}.
\end{proof}

\underline{Secondly, let us consider the case $|\mathcal{G}_{d+1}(\bar{y},\bar{j})|=1$.} 
Then $j, y\in G$ for any $G\in \mathcal{G}_{d+2}$.  So $|\mathcal{G}_{d+2}(y, j)|=|\mathcal{G}_{d+2}|\neq 0$. Moreover, $\mathcal{G}_{d+1}(\bar{y},j)\cap \mathcal{G}_{d}(\bar{y},\bar{j})= \emptyset$. 
{\bf We next finish this case by proving the following two lemmas (Lemmas \ref{c3.8} and \ref{c3.9}).}

\begin{lemma}\label{c3.8}
Suppose that $|\mathcal{G}_{d+1}(\bar{y},\bar{j})|=1$, $\mathcal{G}_{d}(\bar{y},\bar{j})\neq \emptyset$. Then
\begin{align*}
|\mathcal{F}|\leq \sum_{0 \leq i \leq d}\binom{n}{i}+\binom{n-1}{d}-2\binom{n-d-2}{d}+2.
\end{align*}
In particular, if $d=1$, then $|\mathcal{F}|\leq 8$.
\end{lemma}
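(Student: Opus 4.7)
The plan is to follow the three-case template of Lemma \ref{c3.1} but exploit a crucial new structural ingredient: for any $G_1, G_2 \in \mathcal{G}_{d+1}$ we have $|G_1+G_2| = 2(d+1) - 2|G_1 \cap G_2| \leq 2d+1$, so $|G_1 \cap G_2| \geq 1$ and thus $\mathcal{G}_{d+1}$ is an intersecting family. Let $G_0$ denote the unique element of $\mathcal{G}_{d+1}(\bar{y},\bar{j})$, so that $|G_0| = d+1$ and $y, j \notin G_0$; then every $G \in \mathcal{G}_{d+1}$ meets $G_0$. Since $G_0 \cap \{y, j\} = \emptyset$, tracing down to $[n] \setminus \{y, j\}$ gives the sharpened bound
\[
|\mathcal{G}_{d+1}(y, j)| \leq \binom{n-2}{d-1} - \binom{n-d-3}{d-1},
\]
because $\mathcal{G}_{d+1}(y, j)$ consists of $(d-1)$-subsets of $[n] \setminus \{y, j\}$ all meeting the fixed $(d+1)$-set $G_0$. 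This saving will absorb the extra $+1$ coming from $|\mathcal{G}_{d+1}(\bar{y},\bar{j})| = 1$.

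The proof then splits into three subcases according to whether $\mathcal{G}_{d+1}(y,\bar{j})$ or $\mathcal{G}_{d+1}(\bar{y},j)$ is empty. When both are nonempty, I would apply Lemma \ref{H672} on the ground set $[n]\setminus\{y,j\}$ twice: once to the cross-intersecting $d$-uniform families $\mathcal{G}_{d+2}(y,j)$ and $\mathcal{G}_d(\bar{y},\bar{j})$, and once to the cross-intersecting $d$-uniform families $\mathcal{G}_{d+1}(y,\bar{j})$ and $\mathcal{G}_{d+1}(\bar{y},j)$, each producing an upper bound $\binom{n-2}{d} - \binom{n-d-2}{d} + 1$. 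Summing these with the trivial estimates $|\mathcal{G}_d(j)| \leq \binom{n-1}{d-1}$, $|\mathcal{G}_d(y,\bar{j})| \leq \binom{n-2}{d-1}$, the contribution $|\mathcal{G}_{d+1}(\bar{y},\bar{j})| = 1$, and the sharpened bound on $|\mathcal{G}_{d+1}(y, j)|$ above, and collapsing via the Pascal identity $\binom{n-2}{d-1} + \binom{n-2}{d} = \binom{n-1}{d}$, the total reduces to
\[
\sum_{i=0}^d \binom{n}{i} + \binom{n-1}{d} - 2\binom{n-d-2}{d} - \binom{n-d-3}{d-1} + 3,
\]
which is at most the target bound because $\binom{n-d-3}{d-1} \geq 1$, a consequence of $n \geq s+2 \geq 2d+3$.

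In the subcase $\mathcal{G}_{d+1}(y, \bar{j}) = \emptyset$, I would use the estimate $|\mathcal{F}| \leq 2|\mathcal{G}(\bar{j})|$ (available by the running assumption $|\mathcal{G}(i)| \leq |\mathcal{G}(\bar{i})|$ for all $i$), noting $\mathcal{G}_{d+1}(\bar{j}) = \{G_0\}$ and $\mathcal{G}_{d+2}(\bar{j}) = \emptyset$. Lemma \ref{H672} together with $|\mathcal{G}_{d+2}(y,j)| \geq 1$ yields $|\mathcal{G}_d(\bar{y},\bar{j})| \leq \binom{n-2}{d} - \binom{n-d-2}{d}$, and coupling this with $|\mathcal{G}_d(y,\bar{j})| \leq \binom{n-2}{d-1}$ produces $|\mathcal{G}_d(\bar{j})| \leq \binom{n-1}{d} - \binom{n-d-2}{d}$; doubling and invoking the identity $2\sum_{i \leq d} \binom{n-1}{i} = \sum_{i \leq d}\binom{n}{i} + \binom{n-1}{d}$ gives exactly the target. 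The symmetric subcase $\mathcal{G}_{d+1}(\bar{y}, j) = \emptyset$ is handled identically through $|\mathcal{F}| \leq 2|\mathcal{G}(\bar{y})|$.

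The main obstacle is the first subcase (both nonempty), where a naive summation mimicking Lemma \ref{c3.1} overshoots the target by precisely one unit; absorbing this extra unit is exactly what the intersecting-layer refinement on $|\mathcal{G}_{d+1}(y, j)|$ achieves, and it is essential that the bound $n \geq 2d+3$ provided by $s \leq n-2$ makes $\binom{n-d-3}{d-1} \geq 1$. The degenerate specialization $d=1$ yielding the target $|\mathcal{F}| \leq 8$ falls out of the same computation: all binomials remain nonnegative, $\binom{n-4}{0} = 1$, and the saving is tight.
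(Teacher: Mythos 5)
Your proposal is correct and follows essentially the same route as the paper: the same three-way split on whether $\mathcal{G}_{d+1}(y,\bar{j})$ or $\mathcal{G}_{d+1}(\bar{y},j)$ is empty, the same two applications of Lemma \ref{H672} in the first subcase, the same $|\mathcal{F}|\leq 2|\mathcal{G}(\bar{j})|$ (resp.\ $2|\mathcal{G}(\bar{y})|$) estimates in the other two, and the same key saving $|\mathcal{G}_{d+1}(y,j)|\leq\binom{n-2}{d-1}-\binom{n-d-3}{d-1}$ to absorb the extra $+1$. Your phrasing of that saving via ``$\mathcal{G}_{d+1}$ is intersecting'' is just a repackaging of the paper's observation that $\mathcal{G}_{d+1}(y,j)$ and the singleton $\mathcal{G}_{d+1}(\bar{y},\bar{j})$ are cross-intersecting.
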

\begin{proof}
First note that $\mathcal{G}_{d+2}(y, j)\neq \emptyset$ implies that (\ref{c1}) holds.
It is evident that
$
|\mathcal{G}_{d+1}(\bar{y},\bar{j})|=1, ~ |\mathcal{G}_{d}(j)|\leq \binom{n-1}{d-1}, ~ |\mathcal{G}_{d}(y,\bar{j})|\leq \binom{n-2}{d-1}.
$
Furthermore, $\mathcal{G}_{d+1}(y,j)$ and  $\mathcal{G}_{d+1}(\bar{y},\bar{j})$ are also cross-intersecting because $\Delta(\mathcal{G})\leq 2d+1$. This leads to
$
|\mathcal{G}_{d+1}(y,j)|\leq \binom{n-2}{d-1}- \binom{n-d-3}{d-1}.
$

\begin{itemize}

   \item
If  $\mathcal{G}_{d+1}(y,\bar{j})$ and  $\mathcal{G}_{d+1}(\bar{y},j)$ are both non-empty, then (\ref{c4}) holds.
Similar to (\ref{ct4.1}), we have
\begin{align*}
|\mathcal{F}|\leq& \sum_{0 \leq i \leq d-1}\binom{n}{i}+|\mathcal{G}_d(j)|+|\mathcal{G}_d(y,\bar{j})|+|\mathcal{G}_{d+2}(y,j)|+|\mathcal{G}_{d}(\bar{y},\bar{j})|\\
&+|\mathcal{G}_{d+1}(y,j)|+|\mathcal{G}_{d+1}(y,\bar{j})|+|\mathcal{G}_{d+1}(\bar{y},j)|+1\\
\leq&\sum_{0 \leq i \leq d}\binom{n}{i}+\binom{n-1}{d}-2\binom{n-d-2}{d}-\binom{n-d-3}{d-1}+3,
\end{align*}
which is smaller than the required upper bound.

   \item   If  $\mathcal{G}_{d+1}(y,\bar{j})=\emptyset$, then $|\mathcal{G}_{d+1}(\bar{j})|=1$. In addition, (\ref{ca1}) holds and $\mathcal{G}_{d+2}(\bar{j})=\emptyset$. Similar to (\ref{c3}), we have
\begin{align*}
|\mathcal{F}|\leq 2|\mathcal{G}(\bar{j})|=2\sum_{0 \leq i \leq d-1}|\mathcal{G}_i(\bar{j})|+2|\mathcal{G}_{d}(\bar{j})|+2 
\leq\sum_{0 \leq i \leq d}\binom{n}{i}+\binom{n-1}{d}-2\binom{n-d-2}{d}+2.
\end{align*}

   \item   If  $\mathcal{G}_{d+1}(\bar{y},j)=\emptyset$, then $|\mathcal{G}_{d+1}(\bar{y})|=1$ and (\ref{ca2}) holds.
Note that $\mathcal{G}_{d+2}(\bar{y})=\emptyset$. Therefore, we have
\begin{align*}
|\mathcal{F}|\leq& 2|\mathcal{G}(\bar{y})|\leq
\sum_{0 \leq i \leq d}\binom{n}{i}+\binom{n-1}{d}-2\binom{n-d-2}{d}+2.
\end{align*}
\end{itemize}
This completes the proof of Lemma \ref{c3.8}.
\end{proof}

\begin{lemma}\label{c3.9}
Suppose that $|\mathcal{G}_{d+1}(\bar{y},\bar{j})|=1$, $\mathcal{G}_{d}(\bar{y},\bar{j})= \emptyset$. Then
\begin{align*}
|\mathcal{F}|\leq \sum_{0 \leq i \leq d}\binom{n}{i}+\binom{n-1}{d}-2\binom{n-d-2}{d}+2.
\end{align*}
In particular, if $d=1$, then $|\mathcal{F}|\leq 8$.
\end{lemma}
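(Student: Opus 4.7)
The plan is to mirror the three-way case analysis of Lemma \ref{c3.8}, but replace each appeal to the Hilton-Milner bound on $|\mathcal{G}_{d+2}(y,j)|+|\mathcal{G}_d(\bar{y},\bar{j})|$ (which was available there because $\mathcal{G}_d(\bar{y},\bar{j})\neq\emptyset$) by a direct upper bound for $|\mathcal{G}_{d+2}|$ coming from a new cross-intersection with the unique element of $\mathcal{G}_{d+1}(\bar{y},\bar{j})$. The decisive observation is that this unique member $B$ is a $(d+1)$-subset of $[n]\setminus\{y,j\}$, and the diameter condition $|G+B|\le 2d+1$ applied to any $G\in\mathcal{G}_{d+2}$ (which contains both $y$ and $j$) forces $|G\cap B|\ge 1$. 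Passing to the traces, this yields $|\mathcal{G}_{d+2}|=|\mathcal{G}_{d+2}(y,j)|\le \binom{n-2}{d}-\binom{n-d-3}{d}$ and, by the same cross-intersection with $B$, $|\mathcal{G}_{d+1}(y,j)|\le \binom{n-2}{d-1}-\binom{n-d-3}{d-1}$.

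Next I would collect the remaining elementary bounds: the hypothesis $\mathcal{G}_d(\bar{y},\bar{j})=\emptyset$ collapses $\mathcal{G}_d(\bar{j})$ to $\mathcal{G}_d(y,\bar{j})$ and $\mathcal{G}_d(\bar{y})$ to $\mathcal{G}_d(\bar{y},j)$, each of size at most $\binom{n-2}{d-1}$, while $|\mathcal{G}_d(j)|\le \binom{n-1}{d-1}$. The argument then splits into three subcases exactly as in Lemma \ref{c3.8}. First, if both $\mathcal{G}_{d+1}(y,\bar{j})$ and $\mathcal{G}_{d+1}(\bar{y},j)$ are non-empty, Lemma \ref{H672} bounds their sum by $\binom{n-2}{d}-\binom{n-d-2}{d}+1$, and summing all layer contributions while using the Pascal identities $\binom{n-2}{d-1}+\binom{n-2}{d}=\binom{n-1}{d}$ and $\binom{n-d-3}{d}+\binom{n-d-3}{d-1}=\binom{n-d-2}{d}$ collapses the negative terms to exactly $-2\binom{n-d-2}{d}$, yielding the target bound. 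Second, if $\mathcal{G}_{d+1}(y,\bar{j})=\emptyset$, then $\mathcal{G}_{d+1}(\bar{j})=\mathcal{G}_{d+1}(\bar{y},\bar{j})$ has size $1$ and $\mathcal{G}_{d+2}(\bar{j})=\emptyset$, so $|\mathcal{F}|\le 2|\mathcal{G}(\bar{j})|\le \sum_{i=0}^{d-1}\binom{n}{i}+\binom{n-1}{d-1}+2\binom{n-2}{d-1}+2$, and a short arithmetic check shows this is strictly smaller than the target for $n\ge 2d+3$. Third, if $\mathcal{G}_{d+1}(\bar{y},j)=\emptyset$, one symmetrically passes to $|\mathcal{F}|\le 2|\mathcal{G}(\bar{y})|$ and obtains the same bound.

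For the special claim that $|\mathcal{F}|\le 8$ when $d=1$, I would simply substitute $d=1$ into the bound just derived: $1+n+(n-1)-2(n-3)+2=8$, matching the statement.

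The main obstacle is going to be the arithmetic bookkeeping in the first subcase: the seven terms bounding $|\mathcal{G}_d(j)|$, $|\mathcal{G}_d(y,\bar{j})|$, $|\mathcal{G}_{d+1}(y,j)|$, $|\mathcal{G}_{d+1}(\bar{y},\bar{j})|$, $|\mathcal{G}_{d+1}(y,\bar{j})|+|\mathcal{G}_{d+1}(\bar{y},j)|$, and $|\mathcal{G}_{d+2}|$ must combine with the Pascal identities above to give exactly $\sum_{0\le i\le d}\binom{n}{i}+\binom{n-1}{d}-2\binom{n-d-2}{d}+2$ with no additive slack that would ruin the estimate. Experience with Lemmas \ref{c3.6}--\ref{c3.8} suggests this is routine but delicate, and the subcases (ii) and (iii) must also be carefully verified to fall strictly below the target; both reductions rely on the collapse of $\mathcal{G}_d(\bar{j})$ (respectively $\mathcal{G}_d(\bar{y})$) that is precisely what distinguishes this lemma from its predecessor.
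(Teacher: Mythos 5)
Your proof is correct, but it follows a genuinely different route from the paper's. The paper disposes of this case in one stroke: since $\mathcal{G}_{d+2}(\bar{j})=\emptyset$, $\mathcal{G}_{d+1}(\bar{j})=\mathcal{G}_{d+1}(y,\bar{j})\cup\mathcal{G}_{d+1}(\bar{y},\bar{j})$ and $\mathcal{G}_{d}(\bar{j})=\mathcal{G}_{d}(y,\bar{j})$, it bounds $|\mathcal{F}|\leq 2|\mathcal{G}(\bar{j})|$ directly, using only the cross-intersection of $\mathcal{G}_{d+1}(y,\bar{j})$ with the single set of $\mathcal{G}_{d+1}(\bar{y},\bar{j})$ — there is no case analysis at all. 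You instead replicate the three-way split of Lemma \ref{c3.8} and, in the main subcase, sum the layers directly, importing two new estimates ($|\mathcal{G}_{d+2}|\leq\binom{n-2}{d}-\binom{n-d-3}{d}$ and $|\mathcal{G}_{d+1}(y,j)|\leq\binom{n-2}{d-1}-\binom{n-d-3}{d-1}$) from the diameter condition against the unique member $B$ of $\mathcal{G}_{d+1}(\bar{y},\bar{j})$; both of these are valid, since for $G\supseteq\{y,j\}$ of size $d+2$ (resp.\ $d+1$) and $B$ disjoint from $\{y,j\}$ of size $d+1$ one gets $|G+B|=2d+3-2|G\cap B|$ (resp.\ $2d+2-2|G\cap B|$), forcing $|G\cap B|\geq1$. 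I checked your bookkeeping in the first subcase: the Pascal identities $\binom{n-2}{d-1}+\binom{n-2}{d}=\binom{n-1}{d}$ and $\binom{n-d-3}{d}+\binom{n-d-3}{d-1}=\binom{n-d-2}{d}$ do collapse the seven terms to exactly the stated bound, and your subcases (ii) and (iii) land strictly below it. Your argument is longer but each inequality is individually transparent; it is worth noting that the paper's shortcut rests on the intermediate claim $|\mathcal{G}_{d+1}(y,\bar{j})|\leq\binom{n-2}{d}-\binom{n-d-2}{d}$, whereas the cross-intersection with the single $(d+1)$-set $B$ as stated only yields $\binom{n-2}{d}-\binom{n-d-3}{d}$, so your derivation, which reaches the stated bound without that step, is arguably the more robust of the two.
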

\begin{proof}
First note that $\Delta(\mathcal{G})\leq 2d+1$ implies that
$\mathcal{G}_{d+1}(y,\bar{j})$ and  $\mathcal{G}_{d+1}(\bar{y},\bar{j})$ are  cross-intersecting. It follows that
$
|\mathcal{G}_{d+1}(\bar{j})|=|\mathcal{G}_{d+1}(y,\bar{j})|+1\leq\binom{n-2}{d}-\binom{n-d-2}{d}+1.
$
In addition, we have
$|\mathcal{G}_{d}(\bar{j})|=|\mathcal{G}_{d}(y,\bar{j})|\leq\binom{n-2}{d-1}, |\mathcal{G}_{d+2}(\bar{j})|=0$.
Consequently,
\begin{align*}
|\mathcal{F}|\leq& 2|\mathcal{G}(\bar{j})|=2\sum_{0 \leq i \leq d-1}|\mathcal{G}_i(\bar{j})|+2|\mathcal{G}_{d}(\bar{j})|+2|\mathcal{G}_{d+1}(\bar{j})|\\
\leq& 2\sum_{0 \leq i \leq d-1}\binom{n-1}{i}+2\binom{n-2}{d-1}+2 \left(\binom{n-2}{d}-\binom{n-d-2}{d}+1\right)\\
=&\sum_{0 \leq i \leq d}\binom{n}{i}+\binom{n-1}{d}-2\binom{n-d-2}{d}+2,
\end{align*}
as desired.
\end{proof}

In summary of the above arguments,  we complete the proof in Case \ref{case5}.
$\hfill \square$

\subsection{Proof of Step 3}
\begin{casebox}
\begin{case}\label{case6}
 Suppose that $\mathcal{F}$ is not a complex,  $s=5$ and there is a family $\mathcal{G}$ obtained from $\mathcal{F}$ by repeated down-shift operations satisfying $\mathcal{G}\nsubseteq\mathcal{K}(n, 5),\mathcal{H}(n, 5)$, in addition, $\mathcal{G}\nsubseteq\mathcal{T}(n, 5)$, but $S_j(\mathcal{G})\subseteq\mathcal{T}(n, 5)$.
\end{case}
\end{casebox}

\noindent{\bf Proof in Case \ref{case6}.}
Recall that
\begin{align*}
\mathcal{T}(n, 5)=\left\{F\subseteq [n]: |F|\leq 2\right\}\cup\left\{F \in\binom{[n]}{3}:|F \cap[3]| \geq 2\right\},~|\mathcal{G}|=|\mathcal{F}|, ~\Delta(\mathcal{G}) \leq 5,
\end{align*}
and $|\mathcal{G}(i)| \leq |\mathcal{G}(\bar{i})|$ for all $i \in[n]$.
Clearly,  ${\rm max}\{|G|: G\in \mathcal{G}\}\geq3$.

\underline{{Suppose that ${\rm max}\{|G|: G\in \mathcal{G}\}=3$}}. Then $\mathcal{G}_{3}$ is an intersecting family since $\Delta(\mathcal{G}) \leq 5$. Moreover, $\mathcal{G}_{3}$ is not EKR otherwise $\mathcal{G}\subseteq \mathcal{K}(n, 5)$, and $\mathcal{G}_{3}$ is not HM otherwise $\mathcal{G}\subseteq \mathcal{H}(n, 5)$, and $\mathcal{G}_{3} \nsubseteq \mathcal{T}(n, 3)$ otherwise $\mathcal{G}\subseteq \mathcal{T}^*(n, 5)$. By Theorem \ref{H17}, we get
$
|\mathcal{G}_{3}|\leq\binom{n-1}{2}-\binom{n-4}{2}-\binom{n-5}{1}+2.
$
It follows that
\begin{align*}
|\mathcal{F}|=& \sum_{0 \leq i \leq 3}|\mathcal{G}_i|\leq \sum_{0 \leq i \leq 2}\binom{n}{i}+\binom{n-1}{2}-\binom{n-4}{2}-\binom{n-5}{1}+2.
\end{align*}

\underline{{Suppose that ${\rm max}\{|G|: G\in \mathcal{G}\}\geq 4$}}. Then $S_j(\mathcal{G})\subseteq\mathcal{T}(n, 5)$ implies that ${\rm max}\{|G|: G\in \mathcal{G}\}= 4$.

\begin{lemma}\label{l61}
Suppose that ${\rm max}\{|G|: G\in \mathcal{G}\}= 4$. If $j\in \{1,2,3\}$, then
\begin{align*}
|\mathcal{F}|< \sum_{0 \leq i \leq 2}\binom{n}{i}+\binom{n-1}{2}-\binom{n-4}{2}-\binom{n-5}{1}+2.
\end{align*}
\end{lemma}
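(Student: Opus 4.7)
The plan is to exploit the fact that $j \in [3]$ severely restricts $\mathcal{G}_4$, and then layer in diameter, intersection, and cross-intersection bounds to squeeze $|\mathcal{F}|$ below the target. First I would nail down the structure of $\mathcal{G}_4$. Any $G \in \mathcal{G}_4$ must satisfy $j \in G$ and $G \setminus \{j\} \notin \mathcal{G}$ (since $S_j(\mathcal{G}) \subseteq \mathcal{T}(n,5)$ has maximum size $3$), so $S_j(G) = G \setminus \{j\}$ is a $3$-set in $\mathcal{T}(n,5)$, meaning $|(G \setminus \{j\}) \cap [3]| \geq 2$. Because $j \in [3]$, this forces $[3] \setminus \{j\} \subseteq G \setminus \{j\}$, hence $[3] \subseteq G$. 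Thus $\mathcal{G}_4 \subseteq \{[3] \cup \{i\} : i \in [4,n]\}$ and $|\mathcal{G}_4| \leq n-3$. The same shifting analysis shows that every $G \in \mathcal{G}_3$ with $j \notin G$ satisfies $|G \cap [3]| \geq 2$, so in particular every element of $\mathcal{G}_3$ meets $[3]$. Fixing $G_0 = [3] \cup \{i_0\} \in \mathcal{G}_4$ and expanding $|G| + 4 - 2|G \cap G_0| \leq 5$, every $G \in \mathcal{G}_k$ with $k = 2,3$ meets $G_0$, giving $|\mathcal{G}_2| \leq \binom{n}{2} - \binom{n-4}{2}$, and $\mathcal{G}_3$ itself is intersecting.

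Next I would bound $|\mathcal{G}_3|$ via a case analysis on its intersecting structure. In the generic case where $\mathcal{G}_3$ is neither EKR nor HM nor contained in $\mathcal{T}(n,3)$, Theorem \ref{H17} gives $|\mathcal{G}_3| \leq \binom{n-1}{2} - \binom{n-4}{2} - \binom{n-5}{1} + 2$, and summing $|\mathcal{G}_0| + |\mathcal{G}_1| + |\mathcal{G}_2| + |\mathcal{G}_3| + |\mathcal{G}_4|$ with the trivial bounds $|\mathcal{G}_0| \leq 1,\ |\mathcal{G}_1| \leq n$ and the inequalities above yields the required strict bound for all sufficiently large $n$. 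When $\mathcal{G}_3$ has one of the extremal shapes, the hypothesis $\mathcal{G} \nsubseteq \mathcal{K}(n,5), \mathcal{H}(n,5), \mathcal{T}(n,5)$ combined with the forced $[3]$-structure of $\mathcal{G}_4$ must be used to push the bound further. For example, if $\mathcal{G}_3$ is a star centered at some $x \notin [3]$, then any $\{x,y,z\}$ with $\{y,z\} \cap [3] = \emptyset$ is excluded by the "meets $[3]$" constraint, cutting off $\binom{n-4}{2}$ sets. If $\mathcal{G}_3$ is a star centered inside $[3]$, or of Hilton-Milner type, the non-containment in a translation of $\mathcal{K}(n,5)$ or $\mathcal{H}(n,5)$ must be used to produce a witness outside the extremal template, which in turn drags either $|\mathcal{G}_2|$ or $|\mathcal{G}_4|$ strictly below its trivial bound via Lemmas \ref{W231} and \ref{HK32}.

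The main obstacle is exactly this degenerate case analysis: the naive sum $1 + n + (\binom{n}{2} - \binom{n-4}{2}) + \binom{n-1}{2} + (n-3)$ overshoots the claimed bound by approximately $2n - 10$, so closing the gap requires joint cross-intersection bounds rather than term-by-term estimates. I expect to invoke the symmetry assumption $|\mathcal{G}(i)| \leq |\mathcal{G}(\bar{i})|$ to rule out stars of $\mathcal{G}_3$ centered at elements $i$ for which $\mathcal{G}_4$ is rich on $\bar{i}$, and to apply Lemmas \ref{W231} and \ref{HK32} to the pairs $(\mathcal{G}_2, \mathcal{G}_3)$ or $(\mathcal{G}_3, \mathcal{G}_4)$ restricted to the coordinate $i_0$ or the core $[3]$, so that two layers cannot jointly saturate their individual extremes. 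The strict inequality ultimately reflects the fact that the rigid $[3]$-shape of $\mathcal{G}_4$ is incompatible with each extremal configuration in Theorem \ref{H17}, so the savings needed in the relevant layer are always available.
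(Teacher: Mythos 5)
Your structural observations are correct and match the paper's starting point: taking $j=1$ without loss of generality, every $G\in\mathcal{G}_4$ contains $[3]$, every $G\in\mathcal{G}_3(\bar{1})$ contains $\{2,3\}$, and $\mathcal{G}_3$ is intersecting. But your main line of attack --- bounding $|\mathcal{G}_3|$ as a whole via Theorem \ref{H17} and treating the extremal configurations of $\mathcal{G}_3$ as exceptional cases --- has two genuine gaps. First, the arithmetic of your ``generic'' case does not close at the smallest admissible $n$: your sum exceeds the target by $n-3-\binom{n-4}{2}$, which is $+1$ at $n=7$ (and $s=5$ forces only $n\ge 7$), so the required \emph{strict} inequality fails exactly where the lemma must still hold. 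Second, and more seriously, the degenerate cases are not actually excluded or resolved. The hypotheses $\mathcal{G}\nsubseteq\mathcal{K}(n,5),\mathcal{H}(n,5),\mathcal{T}(n,5)$ become vacuous the moment $\mathcal{G}_4\neq\emptyset$, since none of those families contains a $4$-set; they therefore impose no constraint whatsoever on $\mathcal{G}_3$, which can perfectly well be a full star (e.g.\ all $3$-sets through $1$, with $\mathcal{G}_3(\bar{1})=\emptyset$). In that scenario the deficit you must recover is roughly $2n-9$, and your sketch (``produce a witness outside the extremal template, which drags $|\mathcal{G}_2|$ or $|\mathcal{G}_4|$ below its trivial bound'') does not identify where those savings come from; the ``meets $[3]$'' and ``meets $G_0$'' constraints you derive for $\mathcal{G}_2$ are already spent in your generic count.

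The paper's proof avoids Theorem \ref{H17} entirely here and is worth comparing. It exploits the fact that $\mathcal{G}_4(1)$ and $\mathcal{G}_3(\bar{1})$ are not merely small but both lie inside $\{\{2,3,i\}:i\in[4,n]\}$ and hence are $2$-intersecting; Lemma \ref{F16} (with $t=1$) then bounds the two cross-intersecting pairs $\bigl(\mathcal{G}_4(1),\mathcal{G}_2(\bar{1})\bigr)$ and $\bigl(\mathcal{G}_3(\bar{1}),\mathcal{G}_3(1)\bigr)$ jointly by $\binom{n-1}{2}-\binom{n-4}{2}+1$ each, and adding $|\mathcal{G}_0|+|\mathcal{G}_1|+|\mathcal{G}_2(1)|\le 1+n+(n-1)$ lands strictly below the target for every $n\ge 7$ because $\binom{n-4}{2}>n-5$. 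The one case this pairing misses, $\mathcal{G}_3(\bar{1})=\emptyset$, is exactly your problematic star scenario, and the paper dispatches it with the standing normalization $|\mathcal{G}(1)|\le|\mathcal{G}(\bar{1})|$ via $|\mathcal{F}|\le 2|\mathcal{G}(\bar{1})|\le 2\binom{n}{2}-2\binom{n-4}{2}+2$. You gesture at this doubling device in your last paragraph; making it the actual mechanism for the $\mathcal{G}_3(\bar{1})=\emptyset$ case, and replacing the Theorem \ref{H17} step by the paired Lemma \ref{F16} bounds, is what turns your outline into a proof.
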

\begin{proof}
If $j\in \{1,2,3\}$, by symmetry, we may assume that $j=1$. As $S_1(\mathcal{G})\subseteq\mathcal{T}(n, 5)$, we have $\mathcal{G}_{4}(\bar{1})=\emptyset$ and so $\mathcal{G}_{4}(1)\neq\emptyset$. Furthermore, we have
$
\mathcal{G}_{4}(1)\cup \mathcal{G}_{3}(\bar{1})\subseteq \{\{2,3, i\}: i \in[n]\backslash\{1, 2, 3\}\}, \mathcal{G}_{4}(1)\cap \mathcal{G}_{3}(\bar{1})=\emptyset,~\mathcal{G}_{3}(1)\cap \mathcal{G}_{2}(\bar{1})\subseteq \{\{23\}\}.
$
In addition, $\Delta(\mathcal{G}) \leq 5$ implies that $\mathcal{G}_{4}(1)$ and $\mathcal{G}_{2}(\bar{1})$ are cross-intersecting,  $\mathcal{G}_{3}(1)$ and $\mathcal{G}_{3}(\bar{1})$ are cross-intersecting.
Note that   $\mathcal{G}_{4}(1)$ and $\mathcal{G}_{3}(\bar{1})$ are both $2$-intersecting. Note also that  $|\mathcal{G}_{4}(1)|\geq 1$ and $|\mathcal{G}_{4}(\bar{1})|=0$.
Therefore, by Lemma \ref{F16}, we have
\begin{align}\label{c26}
|\mathcal{G}_{2}(\bar{1})|+|\mathcal{G}_{4}(1)|\leq\binom{n-1}{2}-\binom{n-4}{2}+1,
\end{align}
and if $|\mathcal{G}_{3}(\bar{1})|\geq 1$, we have
$
|\mathcal{G}_{3}(\bar{1})|+|\mathcal{G}_{3}(1)|\leq\binom{n-1}{2}-\binom{n-4}{2}+1.
$
Consequently, if $|\mathcal{G}_{3}(\bar{1})|\geq 1$, then
\begin{align}\label{c27}
\begin{split}
|\mathcal{F}|=&|\mathcal{G}_0|+|\mathcal{G}_1|+|\mathcal{G}_{2}(1)|+|\mathcal{G}_2(\bar{1})|+|\mathcal{G}_3(\bar{1})|+|\mathcal{G}_{3}(1)|+|\mathcal{G}_{4}(1)|\\
\leq&1+n+n-1+2\left(\binom{n-1}{2}-\binom{n-4}{2}+1\right)\\
=& \sum_{0 \leq i \leq 2}\binom{n}{i}+\binom{n-1}{2}-2\binom{n-4}{2}+2\\
<&  \sum_{0 \leq i \leq 2}\binom{n}{i}+\binom{n-1}{2}-\binom{n-4}{2}-\binom{n-5}{1}+2.
\end{split}
\end{align}

Next, if $|\mathcal{G}_{3}(\bar{1})|=0$, note that (\ref{c26}) implies 
$
|\mathcal{G}_{2}(\bar{1})|\leq\binom{n-1}{2}-\binom{n-4}{2}$,
then
\begin{align}\label{c28}
|\mathcal{F}|\leq 2\sum_{0 \leq i \leq 2}|\mathcal{G}_i(\bar{1})|\leq 2\left(1+n-1+\binom{n-1}{2}-\binom{n-4}{2}\right)=2\binom{n}{2}-2\binom{n-4}{2}+2.
\end{align}
By simple calculation, we get
$
\sum_{0 \leq i \leq 2}\binom{n}{i}+\binom{n-1}{2}-\binom{n-4}{2}-\binom{n-5}{1}+2=2\binom{n}{2}-(n-1)-\binom{n-4}{2}+8.
$
Since $
2\binom{n}{2}-2\binom{n-4}{2}+2< 2\binom{n}{2}-(n-1)-\binom{n-4}{2}+8,
$
we conclude that
\begin{align*}
|\mathcal{F}|< \sum_{0 \leq i \leq 2}\binom{n}{i}+\binom{n-1}{2}-\binom{n-4}{2}-\binom{n-5}{1}+2,
\end{align*}
as required.
\end{proof}

\begin{lemma}\label{l62}
Suppose that ${\rm max}\{|G|: G\in \mathcal{G}\}= 4$. If $j\notin \{1,2,3\}$, then
\begin{align*}
|\mathcal{F}|\leq \sum_{0 \leq i \leq 2}\binom{n}{i}+\binom{n-1}{2}-\binom{n-4}{2}-\binom{n-5}{1}+2.
\end{align*}
\end{lemma}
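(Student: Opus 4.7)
I begin by extracting from the hypotheses the structural facts forced by $S_j(\mathcal{G}) \subseteq \mathcal{T}(n,5)$, $\max\{|G|: G \in \mathcal{G}\} = 4$ and $j \notin \{1,2,3\}$. Every $G \in \mathcal{G}_4$ must contain $j$, have $|G \cap [3]| \geq 2$, and satisfy $G \setminus \{j\} \notin \mathcal{G}_3$; so the $3$-uniform part of $S_j(\mathcal{G})$ is the disjoint union $\mathcal{G}_3 \sqcup \mathcal{G}_4(j)$, where $\mathcal{G}_4(j) = \{G \setminus \{j\}: G \in \mathcal{G}_4\}$, and this union embeds into $\{F \in \binom{[n]}{3}: |F \cap [3]| \geq 2\}$, a set of size $3n-8$. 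This yields the key estimate $|\mathcal{G}_3| + |\mathcal{G}_4| \leq 3n - 8$, which together with $|\mathcal{G}_0| + |\mathcal{G}_1| \leq n+1$ and $|\mathcal{G}_2(j)| \leq n-1$ gives the master inequality
\[
|\mathcal{F}| = |\mathcal{G}| \leq 5n - 8 + |\mathcal{G}_2(\bar{j})|.
\]

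Next I split on whether $\mathcal{G}_4 = \emptyset$. In that case all members of $\mathcal{G}$ have size at most $3$, and the identity $2|A \cap B| = |A| + |B| - |A \triangle B|$ paired with $\Delta(\mathcal{G}) \leq 5$ forces $|A \cup B| \leq 5$, so $\mathcal{G}$ is automatically $5$-union; the hypotheses $\mathcal{G} \nsubseteq \mathcal{K}(n,5), \mathcal{H}(n,5), \mathcal{T}(n,5)$ then let me invoke Theorem \ref{L24}(ii) with $d=2$ to conclude. Otherwise $\mathcal{G}_4 \neq \emptyset$, and the diameter condition makes $\mathcal{G}_4(j) \subseteq \binom{[n]\setminus\{j\}}{3}$ and $\mathcal{G}_2(\bar{j}) \subseteq \binom{[n]\setminus\{j\}}{2}$ cross-intersecting on a ground set of size $n-1 \geq 6$. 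I then apply Lemma \ref{Last} with $n$ replaced by $n-1$, splitting on $a := |\mathcal{G}_4|$. For $1 \leq a \leq 2n-7$, part (i) yields $a + |\mathcal{G}_2(\bar{j})| \leq 3n-8$, whence $|\mathcal{F}| \leq 8n - 16 - a \leq \binom{n}{2} + 3n - 1$ for $n \geq 7$. For $2n-6 \leq a \leq 3n-12$, part (ii) gives $a + |\mathcal{G}_2(\bar{j})| \leq \binom{n-1}{2} - (n-5) + 1$, so that $|\mathcal{F}| \leq \binom{n-1}{2} + 4n - 2 - a \leq \binom{n}{2} + 3n - 1$.

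The only remaining value is the extremal $a = 3n - 11$, which lies just outside the range of Lemma \ref{Last}. Here $\mathcal{G}_4(j)$ must fill the entire family $\{[3]\} \cup \{\{a', b', x\}: \{a', b'\} \subset [3],\ x \in [4,n] \setminus \{j\}\}$; a short direct argument shows that any $2$-set intersecting every such triple must be contained in $[3]$, so $|\mathcal{G}_2(\bar{j})| \leq 3$ and hence $|\mathcal{F}| \leq 5n - 5$, well within the target. I expect this final boundary case to be the main technical obstacle, because it is not covered by the standard cross-intersecting framework of Lemma \ref{Last} and calls for an ad hoc calculation rooted in the rigid combinatorial structure of $\mathcal{T}(n,5)$; the remaining work is then a careful bookkeeping of how $|\mathcal{G}_3|+|\mathcal{G}_4| \leq 3n-8$ and the cross-intersecting bound combine to match the target $\binom{n}{2} + 3n - 1$ exactly.
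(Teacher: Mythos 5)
Your argument has a genuine gap at its foundation. You claim that the $3$-uniform part of $S_j(\mathcal{G})$ is the disjoint union $\mathcal{G}_3 \sqcup \mathcal{G}_4(j)$ and hence that $|\mathcal{G}_3|+|\mathcal{G}_4|\leq 3n-8$. This is false: a set $F\in\mathcal{G}_3$ with $j\in F$ and $F\setminus\{j\}\notin\mathcal{G}$ satisfies $S_j(F)=F\setminus\{j\}$, so it lands in the \emph{$2$-uniform} part of $S_j(\mathcal{G})$ and is therefore not subject to the constraint $|F\cap[3]|\geq 2$ at all. Such sets can a priori be arbitrary triples containing $j$, up to $\binom{n-1}{2}$ of them (e.g.\ $\mathcal{G}_3(j)$ a large family of pairs in $[n]\setminus\{j\}$ with $\mathcal{G}_2$ avoiding them), so your ``master inequality'' $|\mathcal{F}|\leq 5n-8+|\mathcal{G}_2(\bar{j})|$ does not follow, and nothing in your proposal ever controls $|\mathcal{G}_3(j)|$. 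The correct constraints on $\mathcal{G}_3(j)$ come not from $S_j(\mathcal{G})\subseteq\mathcal{T}(n,5)$ (which only yields $\mathcal{G}_3(j)\cap\mathcal{G}_2(\bar{j})\subseteq\{\{1,2\},\{1,3\},\{2,3\}\}$) but from the diameter condition, which makes $\mathcal{G}_3(j)$ (viewed as $2$-sets) and $\mathcal{G}_3(\bar{j})$ (viewed as $3$-sets) cross-intersecting on $[n]\setminus\{j\}$. The paper's proof exploits exactly this: it bounds $|\mathcal{G}_2(\bar{j})|+|\mathcal{G}_4(j)|$ and $|\mathcal{G}_3(j)|+|\mathcal{G}_3(\bar{j})|$ by two applications of Lemma \ref{Last} in complementary ranges of $|\mathcal{G}_4(j)|$ (using $|\mathcal{G}_3(\bar{j})|+|\mathcal{G}_4(j)|\leq 3n-11$ to couple the two ranges), after first disposing of the case $\mathcal{G}_3(\bar{j})=\emptyset$ by the $|\mathcal{F}|\leq 2|\mathcal{G}(\bar{j})|$ estimate. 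Your proposal applies Lemma \ref{Last} only to the pair $(\mathcal{G}_4(j),\mathcal{G}_2(\bar{j}))$ and never to $(\mathcal{G}_3(\bar{j}),\mathcal{G}_3(j))$, which is where the second half of the loss term $-\binom{n-4}{2}-\binom{n-5}{1}$ comes from.

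Two smaller points. First, your case $\mathcal{G}_4=\emptyset$ is vacuous, since the hypothesis of the lemma is $\max\{|G|:G\in\mathcal{G}\}=4$. Second, your treatment of the boundary value $|\mathcal{G}_4(j)|=3n-11$ is a reasonable observation, but in the paper's organization it never arises: once one separates off the subcase $\mathcal{G}_3(\bar{j})=\emptyset$, the disjointness $\mathcal{G}_4(j)\cap\mathcal{G}_3(\bar{j})=\emptyset$ inside a $(3n-11)$-element family forces $|\mathcal{G}_4(j)|\leq 3n-12$, which is within the range of Lemma \ref{Last}.
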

\begin{proof}
As $S_j(\mathcal{G})\subseteq\mathcal{T}(n, 5)$, we have $\mathcal{G}_{4}(\bar{j})=\emptyset$ and so $\mathcal{G}_{4}(j)\neq\emptyset$. Furthermore, we have
\begin{align}\label{c29}
\begin{split}
&\mathcal{G}_{4}(j)\cup \mathcal{G}_{3}(\bar{j})\subseteq \left\{F \in\binom{[n]\backslash \{j\}}{3}:|F \cap[3]| \geq 2\right\},\\
& \mathcal{G}_{4}(j)\cap \mathcal{G}_{3}(\bar{j})=\emptyset,~\mathcal{G}_{3}(j)\cap \mathcal{G}_{2}(\bar{j})\subseteq \{\{12\},\{13\},\{23\}\}.
\end{split}
\end{align}
Moreover, $\Delta(\mathcal{G}) \leq 5$ implies that $\mathcal{G}_{4}(j)$ and $\mathcal{G}_{2}(\bar{j})$ are cross-intersecting,  $\mathcal{G}_{3}(j)$ and $\mathcal{G}_{3}(\bar{j})$ are cross-intersecting.
Since $\mathcal{G}_{4}(j)\neq\emptyset$, we have
$
|\mathcal{G}_{2}(\bar{j})|\leq\binom{n-1}{2}-\binom{n-4}{2}.
$
If $\mathcal{G}_{3}(\bar{j})=\emptyset$, then the same argument as  (\ref{c28}) works.
From now on, assume that $\mathcal{G}_{3}(\bar{j})\neq\emptyset$.
From (\ref{c29}), we get
\begin{align}\label{c30}
|\mathcal{G}_{3}(\bar{j})|+|\mathcal{G}_{4}(j)|\leq 3n-11.
\end{align}

If $1 \leq |\mathcal{G}_{4}(j)| \leq 2n-7$, note that $1 \leq |\mathcal{G}_{3}(\bar{j})| \leq 3n-12$ and $\binom{n-1}{2}-\binom{n-4}{2}+1<\binom{n-1}{2}-\binom{n-5}{1}+1$, then applying Lemma \ref{Last} to $\mathcal{G}_{4}(j)$ and $\mathcal{G}_{2}(\bar{j})$, $\mathcal{G}_{3}(j)$ and $\mathcal{G}_{3}(\bar{j})$, it yields
\begin{align*}
|\mathcal{G}_{2}(\bar{j})|+|\mathcal{G}_{4}(j)|\leq\binom{n-1}{2}-\binom{n-4}{2}+1,\ |\mathcal{G}_{3}(\bar{j})|+|\mathcal{G}_{3}(j)|\leq\binom{n-1}{2}-\binom{n-5}{1}+1.
\end{align*}
As a consequence,
$
|\mathcal{G}_{2}(\bar{j})|+|\mathcal{G}_{3}(\bar{j})|+|\mathcal{G}_{3}(j)|+|\mathcal{G}_{4}(j)|\leq 2\binom{n-1}{2}-\binom{n-4}{2}-\binom{n-5}{1}+2.
$

If $2n-6 \leq |\mathcal{G}_{4}(j)| \leq 3n-12$, then $1 \leq |\mathcal{G}_{3}(\bar{j})| \leq n-5< 2n-7$. By Lemma \ref{Last}, we get
\begin{align*}
|\mathcal{G}_{2}(\bar{j})|+|\mathcal{G}_{4}(j)|\leq\binom{n-1}{2}-\binom{n-5}{1}+1,\
|\mathcal{G}_{3}(\bar{j})|+|\mathcal{G}_{3}(j)|\leq\binom{n-1}{2}-\binom{n-4}{2}+1.
\end{align*}
Hence, we still have
$
|\mathcal{G}_{2}(\bar{j})|+|\mathcal{G}_{3}(\bar{j})|+|\mathcal{G}_{3}(j)|+|\mathcal{G}_{4}(j)|\leq 2\binom{n-1}{2}-\binom{n-4}{2}-\binom{n-5}{1}+2.
$

Based on the preceding analysis, we conclude that
\begin{align*}
|\mathcal{F}|=&|\mathcal{G}_0|+|\mathcal{G}_1|+|\mathcal{G}_{2}(j)|+|\mathcal{G}_2(\bar{j})|+|\mathcal{G}_3(\bar{j})|+|\mathcal{G}_{3}(j)|+|\mathcal{G}_{4}(j)|\\
\leq&1+n+n-1+2\binom{n-1}{2}-\binom{n-4}{2}-\binom{n-5}{1}+2\\
\leq&  \sum_{0 \leq i \leq 2}\binom{n}{i}+\binom{n-1}{2}-\binom{n-4}{2}-\binom{n-5}{1}+2,
\end{align*}
as stated.
\end{proof}

By Lemmas \ref{l61} and \ref{l62},  we complete the proof in Case \ref{case6}.$\hfill \square$

\subsection{Proof of Step 4}
\begin{casebox}
\begin{case}\label{case7}
 Suppose that $\mathcal{F}$ is not a complex,  $s=2d$ and there is a family $\mathcal{G}$ obtained from $\mathcal{F}$ by repeated down-shift operations satisfying $\mathcal{G}\nsubseteq\mathcal{K}(n, 2d),\mathcal{H}(n, 2d), \mathcal{R}(n, 2d)$,  in addition, $\mathcal{G}\nsubseteq\mathcal{H}^*(n, 4), \mathcal{R}^*(n, 4), \mathcal{U}^*(n, 4)$ if $s=4$, but $S_j(\mathcal{G})\subseteq\mathcal{R}(n, 2d)$ for some $j \in [n]$.
\end{case}
\end{casebox}

\noindent{\bf Proof in Case \ref{case7}.}
Let $d\geq 2$ be an integer. There exists $y\in R\in\binom{[n]}{d+2}$ such that
\begin{align*}
S_j(\mathcal{G})\subseteq\mathcal{R}(n, 2 d)=&\left\{F\subseteq [n]: |F|\leq d-2\right\}\cup\{R\} \cup\left\{F\subseteq [n] : y\in F, |F|=d-1 \text{ or } d\right\}\\
&\cup\left\{F\subseteq [n] : y\notin F, F \cap R \neq \emptyset, |F|=d-1 \text{ or } d\right\}.
\end{align*}
By Case \ref{case2}, we may assume that $S_j(\mathcal{G})\nsubseteq\mathcal{K}(n, 2d)$. Then ${\rm max}\{|G|: G\in \mathcal{G}\}\geq d+2$.
Recall that
$
|\mathcal{G}|=|\mathcal{F}|, ~ \Delta(\mathcal{G})  \leq 2d
$
and $|\mathcal{G}(i)| \leq |\mathcal{G}(\bar{i})|$ for all $i \in[n]$.

\underline{
Suppose that ${\rm max}\{|G|: G\in \mathcal{G}\}=d+2$. }
Then $S_j(\mathcal{G})\subseteq\mathcal{R}(n, 2d)$ implies that $\mathcal{G}_{d+2}=\{R\}$ and $S_j(R)=R$. Therefore, $j\notin R$ or $j\in R$ and $R\backslash \{j\}\in \mathcal{G}_{d+1}$. Let us note that $R\backslash \{j\}\in \mathcal{G}_{d+1}$ would imply $S_j(R\backslash \{j\})=R\backslash \{j\}$. Then $S_j(\mathcal{G})_{d+1}\neq \emptyset$, which contradicts with $S_j(\mathcal{G})\subseteq\mathcal{R}(n, 2d)$.
So there must be $j\notin R$.

Observe that $\Delta(\mathcal{G})\leq 2d$ implies that $\mathcal{G}_{d+1}(j)$ and $\mathcal{G}_{d}(\bar{j})$ are cross-intersecting. In addition, $\mathcal{G}_{d+2}$ and $\mathcal{G}_{d}(\bar{j})$ are cross-intersecting, $\mathcal{G}_{d+2}$ and $\mathcal{G}_{d-1}$ are cross-intersecting. Then
$
|\mathcal{G}_{d-1}|\leq\binom{n}{d-1}-\binom{n-d-2}{d-1}.
$

\begin{lemma}\label{c3.10}
Suppose that ${\rm max}\{|G|: G\in \mathcal{G}\}=d+2$ and $j\notin R$. Then
\begin{align*}
|\mathcal{F}|\leq \sum_{0 \leq i \leq d}\binom{n}{i}-\binom{n-d-1}{d}-\binom{n-d-2}{d-1}+1.
\end{align*}
\end{lemma}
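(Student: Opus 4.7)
The plan is to follow the template of Lemmas \ref{c3.3}--\ref{c3.5}: deduce the structural consequences of $S_j(\mathcal{G})\subseteq\mathcal{R}(n,2d)$ and $\Delta(\mathcal{G})\le 2d$, apply cross-intersecting inequalities to the key level pair, and then sum up.

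Since $\mathcal{R}(n,2d)$ has $R$ as its unique $(d+2)$-set and no $(d+1)$-set, the hypothesis $S_j(\mathcal{G})\subseteq\mathcal{R}(n,2d)$ combined with $j\notin R$ (so that $S_j(R)=R$) immediately forces $\mathcal{G}_{d+2}=\{R\}$, $\mathcal{G}_{d+1}(\bar j)=\emptyset$, and $\mathcal{G}_{d+1}(j)\cap\mathcal{G}_d(\bar j)=\emptyset$. The diameter bound further yields that $\mathcal{G}_{d+1}(j)$ and $\mathcal{G}_d(\bar j)$ are cross-intersecting in $\binom{[n]\setminus\{j\}}{d}$ (because $|(A\cup\{j\})+B|=|A+B|+1$ for $A\in\mathcal{G}_{d+1}(j)$, $B\in\mathcal{G}_d(\bar j)$), and that every $F\in\mathcal{G}_{d-1}\cup\mathcal{G}_d$ meets $R$. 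In particular one records $|\mathcal{G}_{d-1}|\le \binom{n}{d-1}-\binom{n-d-2}{d-1}$ and $|\mathcal{G}_d(j)|\le \binom{n-1}{d-1}$; the new $-\binom{n-d-2}{d-1}$ estimate, absent from Lemma \ref{c3.5}, is precisely what delivers the extra saving in the target bound.

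A preliminary reduction rules out $\mathcal{G}_{d+1}=\emptyset$: in that case every set of size $d$ or $d-1$ in $\mathcal{G}$ already meets $R$ by the intersection conditions above, and the remaining sets have size $\le d-2$ or equal $R$, so $\mathcal{G}\subseteq\mathcal{R}(n,2d)$ for any choice of $y\in R$, contradicting the standing assumption of Case \ref{case7}. With $|\mathcal{G}_{d+1}(j)|\ge 1$ in hand, I split on $\mathcal{G}_d(\bar j)$. If $\mathcal{G}_d(\bar j)=\emptyset$, the na\"ive bound $|\mathcal{F}|\le 2|\mathcal{G}(\bar j)|\le \sum_{i\le d-1}\binom{n}{i}+\binom{n-1}{d-1}+2$ is strictly smaller than the target for $n\ge 2d+2$. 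If $\mathcal{G}_d(\bar j)\neq \emptyset$, then $\mathcal{G}_{d+1}(j)$ and $\mathcal{G}_d(\bar j)$ are non-empty, disjoint, cross-intersecting families in $\binom{[n]\setminus\{j\}}{d}$, so the strict form of Lemma \ref{H672} (valid since $n\ge 2d+2$) gives
\[
|\mathcal{G}_{d+1}(j)|+|\mathcal{G}_d(\bar j)|\le \binom{n-1}{d}-\binom{n-d-1}{d};
\]
when both sizes are at least two, Lemmas \ref{W231} and \ref{HK32} in their strict forms give a stronger estimate, yielding a strictly smaller final bound (matching the tightness pattern of Lemma \ref{c3.5}).

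Assembling
\[
|\mathcal{F}|\le \sum_{i\le d-2}\binom{n}{i}+|\mathcal{G}_{d-1}|+|\mathcal{G}_d(j)|+\bigl(|\mathcal{G}_d(\bar j)|+|\mathcal{G}_{d+1}(j)|\bigr)+1
\]
and substituting the estimates, together with the Pascal identity $\binom{n-1}{d-1}+\binom{n-1}{d}=\binom{n}{d}$, yields exactly $\sum_{i\le d}\binom{n}{i}-\binom{n-d-1}{d}-\binom{n-d-2}{d-1}+1$ in the tight middle case, with strictly smaller values otherwise. I expect the only genuinely delicate step to be the preliminary reduction excluding $\mathcal{G}_{d+1}=\emptyset$, since it is what both uses the hypothesis $\mathcal{G}\nsubseteq\mathcal{R}(n,2d)$ and forces the existence of the cross-intersecting pair on which the whole estimate hinges; the remainder is a direct bookkeeping adaptation of the proof of Lemma \ref{c3.5}.
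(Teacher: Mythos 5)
Your proof is correct and follows essentially the same route as the paper: the paper derives the identical structural facts ($\mathcal{G}_{d+2}=\{R\}$, $\mathcal{G}_{d+1}(\bar j)=\emptyset$, $\mathcal{G}_{d+1}(j)\cap\mathcal{G}_d(\bar j)=\emptyset$, and $|\mathcal{G}_{d+1}(j)|\ge 1$ from $\mathcal{G}\nsubseteq\mathcal{R}(n,2d)$), disposes of small $|\mathcal{G}_d(\bar j)|$ via the $2|\mathcal{G}(\bar j)|$ bound, and then defers the remaining cases to the argument of Lemma \ref{c3.5}, which is exactly the assembly you carry out with Lemma \ref{H672} (and Lemmas \ref{W231}, \ref{HK32}). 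Only a cosmetic slip: the estimate $|\mathcal{G}_{d-1}|\le\binom{n}{d-1}-\binom{n-d-2}{d-1}$ is not ``absent from Lemma \ref{c3.5}''---it appears there as (\ref{c7}) and is used in the same way.
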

\begin{proof}
Since $j\notin R$, we have $j\neq y$ and $|\mathcal{G}_{d+2}|=|\mathcal{G}_{d+2}(y,\bar{j})|=1$. Observe that $S_j(\mathcal{G})\subseteq\mathcal{R}(n, 2d)$ implies that $j\in G$ for any $G\in \mathcal{G}_{d+1}$ and $\mathcal{G}_{d+1}(j)\cap \mathcal{G}_{d}(\bar{j}) =\emptyset$. So $|\mathcal{G}_{d+1}(\bar{j})|=0$. Since $\mathcal{G}\nsubseteq\mathcal{R}(n, 2d)$, we have $|\mathcal{G}_{d+1}(j)|\neq 0$.

If $|\mathcal{G}_{d}(\bar{j})|\leq 1$, then
$
|\mathcal{F}|\leq 2\sum_{0 \leq i \leq d+2}|\mathcal{G}_i(\bar{j})|\leq 2\sum_{0 \leq i \leq d-1}\binom{n-1}{i}+4
=\sum_{0 \leq i \leq d-1}\binom{n}{i}+\binom{n-1}{d-1}+4.
$
Note that $d\geq 2$ and $n\geq 2d+2$. So this upper bound is smaller than the one we want.

The rest of the proof is simply follows from that of Lemma \ref{c3.5}.
\end{proof}

\underline{Suppose that ${\rm max}\{|G|: G\in \mathcal{G}\}\geq d+3$}. 
Then $S_j(\mathcal{G})\subseteq\mathcal{R}(n, 2d)$ implies that ${\rm max}\{|G|: G\in \mathcal{G}\}= d+3$, $\mathcal{G}_{d+3}=\{R^{\prime}\}$ for some $j\in R^{\prime}$ and $S_j(R^{\prime})=R^{\prime}\backslash \{j\}$. Furthermore, $y\in R^{\prime}\backslash \{j\}$, $\mathcal{G}_{d+2}=\emptyset$, $j\in G$ for any $G\in \mathcal{G}_{d+1}$ and $\mathcal{G}_{d+1}(j)\cap \mathcal{G}_{d}(\bar{j}) =\emptyset$.

Observe that $\Delta(\mathcal{G})\leq 2d$ implies that $\mathcal{G}_{d+1}(j)$ and $\mathcal{G}_{d}(\bar{j})$ are cross-intersecting. In addition, $\mathcal{G}_{d+3}$ and $\mathcal{G}_{d}$ are cross-$2$-intersecting, $\mathcal{G}_{d+3}(j)$ and $\mathcal{G}_{d}(j)$ are cross-intersecting, $\mathcal{G}_{d+3}$ and $\mathcal{G}_{d-1}$ are cross-intersecting, $\mathcal{G}_{d+3}$ and $\mathcal{G}_{d-2}$ are cross-intersecting. Then
\begin{align}\label{c13}
\begin{split}
&|\mathcal{G}_d|\leq\binom{n}{d}-\binom{n-d-3}{d}-(d+3)\binom{n-d-3}{d-1},~|\mathcal{G}_d(j)|\leq\binom{n-1}{d-1}-\binom{n-d-3}{d-1},\\
&|\mathcal{G}_{d-1}|\leq\binom{n}{d-1}-\binom{n-d-3}{d-1},~|\mathcal{G}_{d-2}|\leq\binom{n}{d-2}-\binom{n-d-3}{d-2}.
\end{split}
\end{align}

\begin{lemma}\label{c3.11}
Suppose that ${\rm max}\{|G|: G\in \mathcal{G}\}= d+3$. Then
\begin{align*}
|\mathcal{F}|<\sum_{0 \leq i \leq d}\binom{n}{i}-\binom{n-d-1}{d}-\binom{n-d-2}{d-1}+2.
\end{align*}
\end{lemma}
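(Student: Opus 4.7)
The plan is to decompose $|\mathcal{F}|=\sum_k |\mathcal{G}_k|$ layer by layer and apply the right tool to each piece. The structural information assembled before the lemma tells us that $\mathcal{G}_{d+3}=\{R'\}$ with $j,y\in R'$, $\mathcal{G}_{d+2}=\emptyset$, every member of $\mathcal{G}_{d+1}$ contains $j$, and $\mathcal{G}_{d+1}(j)\cap \mathcal{G}_d(\bar{j})=\emptyset$, together with the layer bounds (\ref{c13}) on $|\mathcal{G}_d|,|\mathcal{G}_d(j)|,|\mathcal{G}_{d-1}|,|\mathcal{G}_{d-2}|$. One further fact comes for free from $\Delta(\mathcal{G})\le 2d$: the families $\mathcal{G}_{d+1}(j)$ and $\mathcal{G}_d(\bar{j})$, viewed as $d$-subsets of $[n]\setminus\{j\}$, form a disjoint cross-intersecting pair. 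Combined with the trivial $|\mathcal{G}_i|\le \binom{n}{i}$ for $i\le d-3$, this is the complete toolkit.

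I would then split according to the sizes of the pair $(\mathcal{G}_d(\bar{j}),\mathcal{G}_{d+1})$. In the degenerate regime $|\mathcal{G}_d(\bar{j})|\le 1$, the normalisation $|\mathcal{G}(j)|\le |\mathcal{G}(\bar{j})|$ combined with $\mathcal{G}_{d+1}(\bar{j})=\mathcal{G}_{d+2}=\mathcal{G}_{d+3}(\bar{j})=\emptyset$ gives $|\mathcal{F}|\le 2|\mathcal{G}(\bar{j})|\le \sum_{i=0}^{d}\binom{n}{i}-\binom{n-1}{d}+2$, and the identity $\binom{n-1}{d}-\binom{n-d-1}{d}-\binom{n-d-2}{d-1}=\sum_{k=0}^{d-2}\binom{n-k-2}{d-1}$, strictly positive for $d\ge 2$ and $n\ge 2d+2$, closes this case. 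In the dual degenerate regime $|\mathcal{G}_{d+1}|\le 1$, the bounds (\ref{c13}) alone suffice; the required strict inequality reduces via Pascal's rule to $(d+1)\binom{n-d-3}{d-1}>\binom{n-d-3}{d-2}$, which is immediate from $\binom{n-d-3}{d-1}/\binom{n-d-3}{d-2}=(n-2d-1)/(d-1)$ and $d+1>d-1$. In the main regime $|\mathcal{G}_d(\bar{j})|,|\mathcal{G}_{d+1}|\ge 2$, I would apply Lemma \ref{W231} (for $d\ge 3$) or Lemma \ref{HK32} (for $d=2$) to the pair $(\mathcal{G}_{d+1}(j),\mathcal{G}_d(\bar{j}))$ in $\binom{[n]\setminus\{j\}}{d}$; because they are disjoint and the ground set has size $n-1\ge 2d+1$, the strict form of these lemmas delivers the joint bound $\binom{n-1}{d}-\binom{n-d-1}{d}-\binom{n-d-2}{d-1}+1$, which is exactly the Hilton--Milner-type saving needed to reach the target.

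Collating the pieces in the main case using $\binom{n-1}{d-1}+\binom{n-1}{d}=\binom{n}{d}$, I expect the final estimate to read
\[
|\mathcal{F}|\le 2 + \sum_{i=0}^d \binom{n}{i} - \binom{n-d-1}{d} - \binom{n-d-2}{d-1} - 2\binom{n-d-3}{d-1} - \binom{n-d-3}{d-2},
\]
which is strictly below the target whenever $\binom{n-d-3}{d-1}\ge 1$, i.e.\ under our standing hypothesis $d\ge 2$ and $n\ge 2d+2$. The main conceptual obstacle is to resist bounding $|\mathcal{G}_{d+1}|$ by the crude $\binom{n-1}{d}-\binom{n-d-3}{d}$ that one gets from cross-intersection with $\mathcal{G}_{d+3}$ alone---this estimate is too weak to absorb both negative binomials appearing in the target---and instead to couple $\mathcal{G}_{d+1}$ with $\mathcal{G}_d(\bar{j})$ so that the sharper Hilton--Milner bound of Lemma \ref{W231}/\ref{HK32} can be spent on them as a pair. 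The boundary behaviour at $d=2$ and at $n=2d+2$ requires a quick direct check, but each such case reduces to a one-line comparison of binomial coefficients.
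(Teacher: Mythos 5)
Your proposal is correct and follows essentially the same route as the paper: the same three-way split on $|\mathcal{G}_d(\bar{j})|\le 1$, $|\mathcal{G}_{d+1}(j)|\le 1$ (equivalent to your $|\mathcal{G}_{d+1}|\le 1$ since every $(d{+}1)$-set contains $j$), and the main regime handled by the strict form of Lemma \ref{W231}/\ref{HK32} applied to the disjoint pair $(\mathcal{G}_{d+1}(j),\mathcal{G}_d(\bar{j}))$, combined with the bounds in (\ref{c13}); your final estimate in the main case agrees exactly with the paper's after applying Pascal's rule. The only blemish is the identity in your first case, whose right-hand side should carry an extra $+\binom{n-d-2}{d-2}$, but this does not affect the positivity you need.
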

\begin{proof}
If $|\mathcal{G}_{d}(\bar{j})|\leq 1$, note that $|\mathcal{G}_{d+1}(\bar{j})|=|\mathcal{G}_{d+2}(\bar{j})|=|\mathcal{G}_{d+3}(\bar{j})|=0$, then
$
|\mathcal{F}|\leq 2\sum_{0 \leq i \leq d}|\mathcal{G}_i(\bar{j})|\leq 2\sum_{0 \leq i \leq d-1}\binom{n-1}{i}+2=\sum_{0 \leq i \leq d-1}\binom{n}{i}+\binom{n-1}{d-1}+2,
$
which is smaller than the required upper bound.

If $|\mathcal{G}_{d+1}(j)|\leq 1$, note that $|\mathcal{G}_{d+1}|=|\mathcal{G}_{d+1}(j)|, |\mathcal{G}_{d+2}|=0, |\mathcal{G}_{d+3}|=1$, then applying (\ref{c13}), it yields
\begin{align*}
|\mathcal{F}|\leq & \sum_{0 \leq i \leq d}|\mathcal{G}_i|+1+1\leq \sum_{0 \leq i \leq d-3}\binom{n}{i}+|\mathcal{G}_{d-2}|+|\mathcal{G}_{d-1}|+|\mathcal{G}_{d}|+2\\
\leq&\sum_{0 \leq i \leq d-3}\binom{n}{i}+\binom{n}{d-2}-\binom{n-d-3}{d-2}+\binom{n}{d-1}-\binom{n-d-3}{d-1}\\
&+\binom{n}{d}-\binom{n-d-3}{d}-(d+3)\binom{n-d-3}{d-1}+2\\
=&\sum_{0 \leq i \leq d}\binom{n}{i}-\binom{n-d-1}{d}-(d+2)\binom{n-d-3}{d-1}+2.
\end{align*}
Since $n\geq 2d+2$, by simple calculation, we have $\binom{n-d-3}{d-2}<(d+1)\binom{n-d-3}{d-1}$. It follows that
\begin{align*}
|\mathcal{F}|
<\sum_{0 \leq i \leq d}\binom{n}{i}-\binom{n-d-1}{d}-\binom{n-d-2}{d-1}+2.
\end{align*}

 If $|\mathcal{G}_{d+1}(j)|\geq 2$ and $\mathcal{G}_{d}(\bar{j})\geq 2$, then by Lemmas \ref{W231} and  \ref{HK32}, we have
$
|\mathcal{G}_d(\bar{j})|+|\mathcal{G}_{d+1}(j)|
\leq\binom{n-1}{d}-\binom{n-d-1}{d}-\binom{n-d-2}{d-1}+1.
$
Therefore, applying (\ref{c13}), we get
\begin{align*}
|\mathcal{F}|=& \sum_{0 \leq i \leq d+3}|\mathcal{G}_i|\leq \sum_{0 \leq i \leq d-3}\binom{n}{i}+|\mathcal{G}_{d-2}|+|\mathcal{G}_{d-1}|+|\mathcal{G}_d(j)|+|\mathcal{G}_d(\bar{j})|+|\mathcal{G}_{d+1}(j)|+1\\
\leq&\sum_{0 \leq i \leq d-3}\binom{n}{i}+\binom{n}{d-2}-\binom{n-d-3}
{d-2}+\binom{n}{d-1}-\binom{n-d-3}{d-1}\\
&+\binom{n-1}{d-1}-\binom{n-d-3}{d-1}+\binom{n-1}{d}-\binom{n-d-1}{d}-\binom{n-d-2}{d-1}+2\\
=&\sum_{0 \leq i \leq d}\binom{n}{i}-\binom{n-d-1}{d}-2\binom{n-d-2}{d-1}-\binom{n-d-3}{d-1}+2\\
<&\sum_{0 \leq i \leq d}\binom{n}{i}-\binom{n-d-1}{d}-\binom{n-d-2}{d-1}+2.
\end{align*}
This completes the proof of Lemma \ref{c3.11}.
\end{proof}
Combining Lemma \ref{c3.10} and Lemma \ref{c3.11}, we complete the proof in Case \ref{case7}.
$\hfill \square$

\subsection{Proof of Step 5}
\begin{casebox}
\begin{case}\label{case8}
 Suppose that $\mathcal{F}$ is not a complex,  $s=4$ and there is a family $\mathcal{G}$ obtained from $\mathcal{F}$ by repeated down-shift operations satisfying $\mathcal{G}\nsubseteq\mathcal{K}(n, 4), \mathcal{H}(n, 4), \mathcal{R}(n, 4), \mathcal{R}^*(n, 4), \mathcal{U}^*(n, 4)$, in addition, $\mathcal{G}\nsubseteq\mathcal{H}^*(n, 4)$, but $S_j(\mathcal{G})\subseteq\mathcal{H}^*(n, 4)$.
\end{case}
\end{casebox}

\noindent{\bf Proof in Case \ref{case8}.}
Recall that
\begin{align*}
\mathcal{H}^*(n, 4)=\left\{F\subseteq [n]: |F|\leq 1\right\} \cup\left\{F \in\binom{[n]}{2}: F \cap[2] \neq \emptyset\right\} \cup\{\{1,2, i\}: i \in[3, n]\}.
\end{align*}
Furthermore, we have $|\mathcal{G}|=|\mathcal{F}|, ~\Delta(\mathcal{G}) \leq  4$ and $|\mathcal{G}(i)| \leq |\mathcal{G}(\bar{i})|$ for all $i \in[n]$.
Since $\mathcal{G}\nsubseteq\mathcal{K}(n, 4)$, we get  ${\rm max}\{|G|: G\in \mathcal{G}\}\geq 3$. We divide the proof into two cases.

{\bf Subcase 8.1.} ${\rm max}\{|G|: G\in \mathcal{G}\}=3$.

Since $\mathcal{G}\nsubseteq\mathcal{K}(n, 4)$ and $\mathcal{G}\nsubseteq\mathcal{H}(n, 4)$, we have $|\mathcal{G}_{3}|\geq 2$. If $j=2$, then $S_2(\mathcal{G})\subseteq\mathcal{H}^*(n, 4)$ implies that $\mathcal{G}_{3}(\bar{2})=\emptyset$, $1\in G$ for any $G\in \mathcal{G}_{3}(2)$ and $1\in G^{\prime}$ for any $G^{\prime}\in \mathcal{G}_{2}(\bar{2})$.
It follows that  $\mathcal{G}\subseteq\mathcal{H}^*(n, 4)$, a contradiction. Hence, we have $j\neq 2$. By symmetry, we also have $j\neq 1$.

Now we assume that $j\neq 1, 2$. Then $S_j(\mathcal{G})\subseteq\mathcal{H}^*(n, 4)$ implies that $1,2 \in G$ for any $G\in \mathcal{G}_{3}(\bar{j})$. Moreover, $A\cap [2]\neq \emptyset$ for any $A\in \mathcal{G}_{2}(\bar{j})$. Since $\Delta(\mathcal{G}) \leq 4$, we infer that $\mathcal{G}_{3}(\bar{j})$ and $\mathcal{G}_{2}(j)$ are cross-intersecting.  If $\mathcal{G}_{3}(j)=\emptyset$, then $|\mathcal{G}_{3}(\bar{j})|=|\mathcal{G}_{3}|\geq 2$. So  $B\cap [2]\neq \emptyset$ for any $B\in \mathcal{G}_{2}(j)$.
It follows that $G\cap [2]\neq \emptyset$ for any $G\in \mathcal{G}_{2}$. Recall that $1,2 \in G$ for any $G\in \mathcal{G}_{3}(\bar{j})=\mathcal{G}_{3}$. Therefore, we have $\mathcal{G}\subseteq\mathcal{H}^*(n, 4)$, a contradiction. Then $\mathcal{G}_{3}(j)\neq \emptyset$. Observe that $\Delta(\mathcal{G}) \leq 4$ implies that $\mathcal{G}_{3}(j)$ and $\mathcal{G}_{2}(\bar{j})$ are cross-intersecting.

To complete the proof in Subcase 8.1, it suffices to prove the following lemma.

\begin{lemma}\label{c3.12}
Suppose that ${\rm max}\{|G|: G\in \mathcal{G}\}= 3$ and $\mathcal{G}_{3}(j)\neq \emptyset$. Then
\begin{align*}
|\mathcal{F}|\leq \sum_{0 \leq i \leq 2}\binom{n}{i}-\binom{n-3}{2}-\binom{n-4}{1}+2.
\end{align*}
\end{lemma}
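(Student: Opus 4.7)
The plan is to decompose
\[
|\mathcal{F}|=|\mathcal{G}|\leq |\mathcal{G}_0|+|\mathcal{G}_1|+|\mathcal{G}_2(j)|+|\mathcal{G}_2(\bar{j})|+|\mathcal{G}_3(j)|+|\mathcal{G}_3(\bar{j})|,
\]
bound $|\mathcal{G}_0|+|\mathcal{G}_1|\leq 1+n$ trivially, and then squeeze the remaining four pieces using the constraints coming from $S_j(\mathcal{G})\subseteq \mathcal{H}^*(n,4)$ together with $\Delta(\mathcal{G})\leq 4$. First, using $j\notin\{1,2\}$, I would extract the containments $\mathcal{G}_3(\bar{j})\subseteq \{\{1,2,i\}:i\in[3,n]\setminus\{j\}\}$ and $\mathcal{G}_2(\bar{j})\subseteq \{F\in\binom{[n]\setminus\{j\}}{2}:F\cap[2]\neq\emptyset\}$, giving $|\mathcal{G}_3(\bar{j})|\leq n-3$ and $|\mathcal{G}_2(\bar{j})|\leq 2n-5$. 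Next, $\Delta(\mathcal{G})\leq 4$ forces two cross-intersecting relations on $[n]\setminus\{j\}$: $\mathcal{G}_3(j)$ and $\mathcal{G}_2(\bar{j})$ are cross-intersecting $2$-uniform families, while $\mathcal{G}_3(\bar{j})$ and $\mathcal{G}_2(j)$ cross-intersect as $3$-sets versus singletons.

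A short case check on $S_j(B\cup\{j\})$ for $B\in\mathcal{G}_3(j)\cap\mathcal{G}_2(\bar{j})$ shows that such a $B\cup\{j\}$ must equal $\{1,2,j\}$, whence $\mathcal{G}_3(j)\cap\mathcal{G}_2(\bar{j})\subseteq\{\{1,2\}\}$ and in particular $|\mathcal{G}_3(j)\cap\mathcal{G}_2(\bar{j})|\leq 1$. This puts us in the strict regime of Lemma \ref{HK32}: whenever $|\mathcal{G}_3(j)|,|\mathcal{G}_2(\bar{j})|\geq 2$, we obtain
\[
|\mathcal{G}_3(j)|+|\mathcal{G}_2(\bar{j})|\leq \binom{n-1}{2}-\binom{n-3}{2}-(n-4)+1=n.
\]

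Next I would split according to $|\mathcal{G}_3(\bar{j})|$. If $|\mathcal{G}_3(\bar{j})|=0$, the hypothesis $\mathcal{G}\nsubseteq\mathcal{H}(n,4)$ together with $\Delta(\mathcal{G})\leq 4$ forces $|\mathcal{G}_3|=|\mathcal{G}_3(j)|\geq 2$, and the strict Lemma \ref{HK32} combined with $|\mathcal{G}_2(j)|\leq n-1$ suffices. If $|\mathcal{G}_3(\bar{j})|=1$, say $\{1,2,i_0\}$, the second cross-intersection collapses $\mathcal{G}_2(j)\subseteq\{\{1,j\},\{2,j\},\{i_0,j\}\}$, so $|\mathcal{G}_2(j)|\leq 3$. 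If $|\mathcal{G}_3(\bar{j})|\geq 2$, intersecting two sets $\{1,2,i_0\},\{1,2,i_1\}$ gives $\mathcal{G}_2(j)\subseteq\{\{1,j\},\{2,j\}\}$, so $|\mathcal{G}_2(j)|\leq 2$. Summing all six pieces together with $|\mathcal{G}_3(\bar{j})|\leq n-3$ yields $|\mathcal{F}|\leq 3n+1=\sum_{0\leq i\leq 2}\binom{n}{i}-\binom{n-3}{2}-\binom{n-4}{1}+2$, the desired bound.

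The main technical obstacle is the degenerate sub-cases $|\mathcal{G}_2(\bar{j})|\leq 1$, where Lemma \ref{HK32} does not apply. If $|\mathcal{G}_2(\bar{j})|=1$, I would fall back on the weaker Lemma \ref{H672} for the bound $|\mathcal{G}_3(j)|\leq 2n-5$, which still keeps the total within $3n+1$. If $|\mathcal{G}_2(\bar{j})|=0$, I would invoke the monotonicity $|\mathcal{G}(j)|\leq|\mathcal{G}(\bar{j})|$ inherited from the initial reduction, so that $|\mathcal{F}|\leq 2|\mathcal{G}(\bar{j})|$; the right-hand side is small because $\mathcal{G}(\bar{j})$ only contains $\emptyset$, singletons of $[n]\setminus\{j\}$, and the few elements of $\mathcal{G}_3(\bar{j})$. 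The trickiest instance is $|\mathcal{G}_3(\bar{j})|\geq 2$ with $|\mathcal{G}_2(\bar{j})|=0$, where I would argue that either $\mathcal{G}\subseteq\mathcal{R}^*(n,4)$ (contradicting the hypothesis), or the structural fact that any $B\in\mathcal{G}_3(j)$ disjoint from $\{1,2\}$ must contain every index in $\{i:\{1,2,i\}\in\mathcal{G}_3(\bar{j})\}$, forcing $|\mathcal{G}_3(\bar{j})|=2$ and letting the monotonicity bound close the case.
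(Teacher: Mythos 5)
Your overall skeleton matches the paper's: the same six-piece decomposition, the same observation that $\mathcal{G}_3(j)\cap\mathcal{G}_2(\bar{j})\subseteq\{\{1,2\}\}$, the same use of Lemma \ref{HK32} in the main regime, and the same $|\mathcal{F}|\le 2|\mathcal{G}(\bar{j})|$ fallback. However, there is a genuine gap in your treatment of the degenerate subcases, specifically when $|\mathcal{G}_3(\bar{j})|\ge 2$. In that regime you correctly get $|\mathcal{G}_2(j)|\le 2$ and $|\mathcal{G}_3(\bar{j})|\le n-3$, so to reach $3n+1$ you need $|\mathcal{G}_2(\bar{j})|+|\mathcal{G}_3(j)|\le n+1$. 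Your fallback for $|\mathcal{G}_2(\bar{j})|=1$ is Lemma \ref{H672}, which only gives $|\mathcal{G}_2(\bar{j})|+|\mathcal{G}_3(j)|\le 2n-4$; summing then yields $1+n+2+(2n-4)+(n-3)=4n-4$, which exceeds $3n+1$ for all $n\ge 6$. The claim that H672 ``still keeps the total within $3n+1$'' is therefore false here. The symmetric degenerate case $|\mathcal{G}_3(j)|=1$ with $|\mathcal{G}_2(\bar{j})|\ge 2$, where Lemma \ref{HK32} also does not apply, is not addressed at all and fails for the same arithmetic reason.

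The paper closes exactly these cases by a finer structural argument that you do not supply: when $|\mathcal{G}_3(\bar{j})|\ge 2$, the exclusion $\mathcal{G}\nsubseteq\mathcal{H}^*(n,4)$ forces some $\{1,x\}$ or $\{2,x\}$ with $x\notin[2]$ into $\mathcal{G}_3(j)$, and the exclusion $\mathcal{G}\nsubseteq\mathcal{R}^*(n,4)$ forces $\mathcal{G}_2(\bar{j})\ne\{\{1,2\}\}$; combining the cross-intersection with the fact that every member of $\mathcal{G}_2(\bar{j})\cup\mathcal{G}_3(j)$ meets $[2]$, the smaller of the two families pins the larger one inside a star at $1$ (or $2$) plus one extra pair, giving $|\mathcal{G}_2(\bar{j})|+|\mathcal{G}_3(j)|\le n-1$. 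Without an argument of this type your bound does not close. (A secondary, minor point: your ``structural fact'' for the case $|\mathcal{G}_2(\bar{j})|=0$, about members of $\mathcal{G}_3(j)$ disjoint from $\{1,2\}$, is vacuous, since the $S_j$-image analysis already forces every member of $\mathcal{G}_3(j)$ to meet $[2]$; the case is in fact resolved simply by $\mathcal{G}\subseteq\mathcal{R}^*(n,4)$, a contradiction, as in the paper.)
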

\begin{proof}
Because $S_j(\mathcal{G})\subseteq\mathcal{H}^*(n, 4)$. If $S_j(G_0)=G_0$ for some $j\in G_0\in \mathcal{G}_{3}$, then $1, 2\in G_0$. Thus $G_0=\{1,2, j\}$. If $S_j(G_1)=G_1\backslash\{j\}$ for some $j\in G_1\in \mathcal{G}_{3}$, then $G_1\backslash\{j\}\cap [2]\neq \emptyset$. Therefore, $G\cap [2] \neq \emptyset$ for any $G\in \mathcal{G}_{3}(j)$ and $\mathcal{G}_{3}(j)\cap \mathcal{G}_{2}(\bar{j})\subseteq\{\{1,2\} \}$.

\underline{First suppose that $\mathcal{G}_{3}(\bar{j})=\emptyset$}. Then $|\mathcal{G}_{3}(j)|\geq 2$.

If $|\mathcal{G}_{2}(\bar{j})|\leq 1$, then
\begin{align}\label{c16}
\begin{split}
|\mathcal{F}|\leq& 2\sum_{0 \leq i \leq 3}|\mathcal{G}_i(\bar{j})|\leq 2\sum_{0 \leq i \leq 1}\binom{n-1}{i}+2=\sum_{0 \leq i \leq 1}\binom{n}{i}+\binom{n-1}{1}+2\\
<&\sum_{0 \leq i \leq 2}\binom{n}{i}-\binom{n-3}{2}-\binom{n-4}{1}+2.
\end{split}
\end{align}

If $|\mathcal{G}_{2}(\bar{j})|\geq 2$, note that $|\mathcal{G}_{3}(j)\cap \mathcal{G}_{2}(\bar{j})|\leq 1$ and $n\geq 6$, then by Lemma  \ref{HK32}, we have
$
|\mathcal{G}_2(\bar{j})|+|\mathcal{G}_{3}(j)|
\leq\binom{n-1}{2}-\binom{n-3}{2}-\binom{n-4}{1}+1.
$
Therefore, we obtain
\begin{align}\label{c15}
\begin{split}
|\mathcal{F}|=& \sum_{0 \leq i \leq 3}|\mathcal{G}_i|\leq \sum_{0 \leq i \leq 1}\binom{n}{i}+|\mathcal{G}_2(j)|+|\mathcal{G}_2(\bar{j})|+|\mathcal{G}_{3}(j)|\\
\leq&\sum_{0 \leq i \leq 1}\binom{n}{i}+n-1+\binom{n-1}{2}-\binom{n-3}{2}-\binom{n-4}{1}+1\\
<&\sum_{0 \leq i \leq 2}\binom{n}{i}-\binom{n-3}{2}-\binom{n-4}{1}+2.
\end{split}
\end{align}

\underline{Next let us deal with the case $\mathcal{G}_{3}(\bar{j})\neq\emptyset$.}
Recall that $1,2 \in G$ for any $G\in \mathcal{G}_{3}(\bar{j})$. Then  $|\mathcal{G}_{3}(\bar{j})|\leq n-3$. Note that $\mathcal{G}_{3}(\bar{j})$ and $\mathcal{G}_{2}(j)$ are cross-intersecting. Moreover, $\mathcal{G}_{3}(j)$ and $\mathcal{G}_{2}(\bar{j})$ are cross-intersecting and $|\mathcal{G}_{3}(j)\cap \mathcal{G}_{2}(\bar{j})|\leq 1$.

If $|\mathcal{G}_{3}(\bar{j})|=1$, then $|\mathcal{G}_{2}(j)|\leq \binom{n-1}{1}-\binom{n-4}{1}=3$. In this case, when $|\mathcal{G}_{2}(\bar{j})|\leq 1$, we have
\begin{align*}
|\mathcal{F}|\leq 2\sum_{0 \leq i \leq 3}|\mathcal{G}_i(\bar{j})|\leq 2\sum_{0 \leq i \leq 1}\binom{n-1}{i}+4<\sum_{0 \leq i \leq 2}\binom{n}{i}-\binom{n-3}{2}-\binom{n-4}{1}+2.
\end{align*}
When $|\mathcal{G}_{2}(\bar{j})|\geq 2$, note that $\mathcal{G}_{3}(j)\neq \emptyset$, by Lemma \ref{H672}, we get
$
|\mathcal{G}_2(\bar{j})|+|\mathcal{G}_{3}(j)|
\leq\binom{n-1}{2}-\binom{n-3}{2}+1.
$
It follows that
\begin{align*}
|\mathcal{F}|=& \sum_{0 \leq i \leq 3}|\mathcal{G}_i|\leq \sum_{0 \leq i \leq 1}\binom{n}{i}+|\mathcal{G}_2(j)|+|\mathcal{G}_2(\bar{j})|+|\mathcal{G}_{3}(j)|+|\mathcal{G}_{3}(\bar{j})|\\
\leq&\sum_{0 \leq i \leq 1}\binom{n}{i}+3+\binom{n-1}{2}-\binom{n-3}{2}+2=\sum_{0 \leq i \leq 2}\binom{n}{i}-\binom{n-3}{2}-\binom{n-4}{1}+2.
\end{align*}

If $|\mathcal{G}_{3}(\bar{j})|\geq 2$, then $B\cap [2]\neq \emptyset$ for any $B\in \mathcal{G}_{2}(j)$ and hence $|\mathcal{G}_{2}(j)|\leq 2$. Since $A\cap [2]\neq \emptyset$ for any $A\in \mathcal{G}_{2}(\bar{j})$, we have $G\cap [2]\neq \emptyset$ for any $G\in \mathcal{G}_{2}$.
Since $1,2 \in G$ for any $G\in \mathcal{G}_{3}(\bar{j})$ and $G\cap [2] \neq \emptyset$ for any $G\in \mathcal{G}_{3}(j)$, there exists $x\in [n]\backslash \{1, 2, j\}$  such that $\{1,x\} \in \mathcal{G}_{3}(j)$ or
$\{2,x\} \in \mathcal{G}_{3}(j)$ otherwise  $\mathcal{G}\subseteq\mathcal{H}^*(n, 4)$. Without loss of generality, we assume that
$\{1,x\} \in \mathcal{G}_{3}(j)$.

If $|\mathcal{G}_{2}(\bar{j})|=0$, then $\mathcal{G}\subseteq\mathcal{R}^*(n, 4)$, a contradiction.

If $|\mathcal{G}_{2}(\bar{j})|=1$, then $\mathcal{G}\nsubseteq\mathcal{R}^*(n, 4)$ implies that $\mathcal{G}_{2}(\bar{j})\neq \{\{1,2\}\}$. Recall that $A\cap [2]\neq \emptyset$ for any $A\in \mathcal{G}_{2}(\bar{j})$, $\mathcal{G}_{3}(j)$ and $\mathcal{G}_{2}(\bar{j})$ are cross-intersecting and $\mathcal{G}_{3}(j)\cap \mathcal{G}_{2}(\bar{j})= \{\{1,2\} \}\text{ or } \emptyset$. So $\mathcal{G}_{2}(\bar{j})= \{\{1,u\} \}\text{ or } \{\{2,x\}\}$ for some $u\in [n]\backslash \{1, 2, x,j\}$.
Suppose that $\mathcal{G}_{2}(\bar{j})= \{\{1,u\}\}$. Then
$
\mathcal{G}_{3}(j)\subseteq \{\{1,i\}, i\in [n]\backslash \{1,u, j\} \}\cup  \{\{2, u\}\}.
$
Hence, we have
$
|\mathcal{G}_2(\bar{j})|+|\mathcal{G}_{3}(j)|
\leq n-1.
$
Observe that
$\sum_{0 \leq i \leq 2}\binom{n}{i}-\binom{n-3}{2}-\binom{n-4}{1}+2=3n+1.$
In view of  $|\mathcal{G}_{3}(\bar{j})|\leq n-3$, we obtain
\begin{align*}
|\mathcal{F}|=& \sum_{0 \leq i \leq 3}|\mathcal{G}_i|\leq \sum_{0 \leq i \leq 1}\binom{n}{i}+|\mathcal{G}_2(j)|+|\mathcal{G}_2(\bar{j})|+|\mathcal{G}_{3}(j)|+|\mathcal{G}_{3}(\bar{j})|\\
\leq&\sum_{0 \leq i \leq 1}\binom{n}{i}+2+n-1+n-3=3n-1<3n+1.
\end{align*}
Suppose that $\mathcal{G}_{2}(\bar{j})= \{\{2,x\}\}$. Then
$
\mathcal{G}_{3}(j)\subseteq \{\{2,i\}, i\in [n]\backslash \{2, x, j\} \}\cup  \{\{1, x\}\}.
$
Exactly the same argument as the case $\mathcal{G}_{2}(\bar{j})=\{ \{1,u\}\}$ works.

Finally, we consider the case $|\mathcal{G}_{2}(\bar{j})|\geq 2$.
If $\mathcal{G}_{3}(j)=\{\{1,x\}\}$, then
$
\mathcal{G}_{2}(\bar{j})\subseteq \{\{1,i\}, i\in [n]\backslash \{1,x, j\} \}\cup  \{\{2, x\}\}.
$
Exactly the same argument as the case $\mathcal{G}_{2}(\bar{j})= \{\{1,u\}\}$ works. So we may assume that $|\mathcal{G}_{3}(j)|\geq 2$.
By Lemma \ref{HK32}, we have
$$
|\mathcal{G}_2(\bar{j})|+|\mathcal{G}_{3}(j)|
\leq\binom{n-1}{2}-\binom{n-3}{2}-\binom{n-4}{1}+1.
$$
Let us note that $|\mathcal{G}_2(j)|+|\mathcal{G}_{3}(\bar{j})|\leq 2+n-3=n-1$.
By replacing  $|\mathcal{G}_2(j)|$ in (\ref{c15}) with $|\mathcal{G}_2(j)|+|\mathcal{G}_{3}(\bar{j})|$ here, we get
\begin{align*}
|\mathcal{F}|= \sum_{0 \leq i \leq 1}|\mathcal{G}_i|+|\mathcal{G}_2(j)|+|\mathcal{G}_3(\bar{j})|+|\mathcal{G}_2(\bar{j})|+|\mathcal{G}_{3}(j)|\leq\sum_{0 \leq i \leq 2}\binom{n}{i}-\binom{n-3}{2}-\binom{n-4}{1}+1,
\end{align*}
which is smaller than the required upper bound.
This completes the proof of Lemma \ref{c3.12}.
\end{proof}

{\bf Subcase 8.2.} ${\rm max}\{|G|: G\in \mathcal{G}\}\geq 4$.

Since $S_j(\mathcal{G})\subseteq\mathcal{H}^*(n, 4)$, we have that  ${\rm max}\{|G|: G\in \mathcal{G}\}=4$, $j\neq 1, 2$ and $1,2, j\in G$ for any $G\in \mathcal{G}_{4}$. Furthermore, we have
$1,2 \in G^{\prime}$ for any $G^{\prime} \in \mathcal{G}_{3}(\bar{j})$ and $\mathcal{G}_{4}(j)\cap\mathcal{G}_{3}(\bar{j})=\emptyset$.
It follows that
\begin{align}\label{c17}
|\mathcal{G}_4(j)|+|\mathcal{G}_{3}(\bar{j})|\leq n-3.
\end{align}
Moreover, $G^{\prime\prime}\cap [2] \neq \emptyset$ for any $G^{\prime\prime}\in \mathcal{G}_{3}(j) \text{ or } \mathcal{G}_{2}(\bar{j})$ and $\mathcal{G}_{3}(j)\cap \mathcal{G}_{2}(\bar{j})= \{\{1,2\} \}\text{ or } \emptyset$. This leads to
\begin{align}\label{c18}
|\mathcal{G}_3(j)|+|\mathcal{G}_{2}(\bar{j})|\leq 2n-4.
\end{align}
By $\Delta(\mathcal{G}) \leq 4$, we know that $\mathcal{G}_{1}$ and $\mathcal{G}_{4}$ are cross-intersecting. This implies that $|\mathcal{G}_{1}|\leq 4$ if $|\mathcal{G}_{4}|=1$, and $\mathcal{G}_{1}\subseteq \{1,2,j\}$ if $|\mathcal{G}_{4}|\geq 2$.

To complete the proof in Subcase 8.2, it suffices to prove the following lemma.

\begin{lemma}\label{c3.13}
Suppose that ${\rm max}\{|G|: G\in \mathcal{G}\}= 4$. Then
\begin{align*}
|\mathcal{F}|\leq \sum_{0 \leq i \leq 2}\binom{n}{i}-\binom{n-3}{2}-\binom{n-4}{1}+2.
\end{align*}
\end{lemma}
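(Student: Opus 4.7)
The plan is to bound
\[
|\mathcal{F}|=|\mathcal{G}_0|+|\mathcal{G}_1|+|\mathcal{G}_2(j)|+|\mathcal{G}_2(\bar j)|+|\mathcal{G}_3(j)|+|\mathcal{G}_3(\bar j)|+|\mathcal{G}_4|
\]
by combining the structural inequalities established in Subcase 8.2: the bound $|\mathcal{G}_4|+|\mathcal{G}_3(\bar j)|\leq n-3$ from (\ref{c17}); the cross-intersecting bound $|\mathcal{G}_3(j)|+|\mathcal{G}_2(\bar j)|\leq 2n-4$ from (\ref{c18}); the dichotomy $|\mathcal{G}_1|\leq 4$ if $|\mathcal{G}_4|=1$ and $|\mathcal{G}_1|\leq 3$ if $|\mathcal{G}_4|\geq 2$; and the trivial $|\mathcal{G}_2(j)|\leq n-1$. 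We split the argument on the values of $|\mathcal{G}_3(\bar j)|$ and $|\mathcal{G}_4|$.

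First, if $|\mathcal{G}_3(\bar j)|\geq 1$, fix a witness $\{1,2,c\}\in\mathcal{G}_3(\bar j)$; then for any $2$-set $\{j,x\}\in\mathcal{G}_2$ the diameter bound $|\{j,x\}+\{1,2,c\}|\leq 4$ forces $x\in\{1,2,c\}$, so $|\mathcal{G}_2(j)|\leq 3$. Substituting yields $|\mathcal{F}|\leq 1+4+3+(2n-4)+(n-3)=3n+1$. Next, if $|\mathcal{G}_3(\bar j)|=0$ and $|\mathcal{G}_4|\leq 2$, direct substitution gives $|\mathcal{F}|\leq 1+4+(n-1)+(2n-4)+1=3n+1$ when $|\mathcal{G}_4|=1$ and $|\mathcal{F}|\leq 1+3+(n-1)+(2n-4)+2=3n+1$ when $|\mathcal{G}_4|=2$. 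The remaining and main case is $|\mathcal{G}_3(\bar j)|=0$ with $|\mathcal{G}_4|\geq 3$, where the naive estimate only yields $4n-4$ and the hypothesis $\mathcal{G}\nsubseteq\mathcal{U}^*(n,4)$ must be used.

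To exploit this hypothesis we first observe that for $|\mathcal{G}_4|\geq 2$ the sets $\{1,2,j,a\}\in\mathcal{G}_4$ cannot all be of the form $\{1,2,y',i\}$ with $y'\neq j$, so $\mathcal{G}$ can embed into $\mathcal{U}^*(n,4)$ only with $y=j$. Since $\mathcal{G}_0,\mathcal{G}_1,\mathcal{G}_2,\mathcal{G}_4$ already lie in the corresponding layers of $\mathcal{U}^*(n,4)$ with $y=j$ by the preceding observations in Subcase 8.2, the hypothesis $\mathcal{G}\nsubseteq\mathcal{U}^*(n,4)$ forces $\mathcal{G}_3(j)\not\subseteq\{\{1,2\}\}$. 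Hence $\mathcal{G}_3$ contains a $3$-set of shape $\{1,z,j\}$ or $\{2,z,j\}$ with $z\in[3,n]\setminus\{j\}$, and by the $1\leftrightarrow 2$ symmetry we may assume $\{1,z,j\}\in\mathcal{G}_3$.

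With $\{1,z,j\}$ in hand, every $\{a,b\}\in\mathcal{G}_2(\bar j)$ must satisfy $\{a,b\}\cap\{1,z\}\neq\emptyset$ in addition to $\{a,b\}\cap[2]\neq\emptyset$, so $\mathcal{G}_2(\bar j)\subseteq(\{\{1,i\}:i\in[n]\setminus\{1,j\}\}\cup\{\{2,z\}\})\setminus\{\{1,z\}\}$ and $|\mathcal{G}_2(\bar j)|\leq n-2$. A short case analysis partitioning $\mathcal{G}_3(j)$ into the parts $\{\{1,w\}\in\mathcal{G}_3(j)\}$, $\{\{2,w\}\in\mathcal{G}_3(j)\}$ and the optional element $\{1,2\}$ then shows that each new set in $\mathcal{G}_3(j)$ beyond $\{1,z\}$ and $\{1,2\}$ removes at least one admissible $2$-set from $\mathcal{G}_2(\bar j)$; consequently $|\mathcal{G}_3(j)|+|\mathcal{G}_2(\bar j)|\leq n$, with the maximum attained at $\mathcal{G}_3(j)=\{\{1,z\},\{1,2\}\}$. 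Combining with $|\mathcal{G}_1|\leq 3$, $|\mathcal{G}_2(j)|\leq n-1$ and $|\mathcal{G}_4|\leq n-3$ yields $|\mathcal{F}|\leq 1+3+(n-1)+n+(n-3)=3n$, strictly below the required $3n+1$. The technical heart of the argument is this delicate trade-off between $|\mathcal{G}_3(j)|$ and $|\mathcal{G}_2(\bar j)|$; the other cases reduce to routine bookkeeping using (\ref{c17}) and (\ref{c18}).
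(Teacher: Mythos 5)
Your overall architecture matches the paper's: split on whether $\mathcal{G}_3(\bar j)$ is empty, use the bounds $|\mathcal{G}_4(j)|+|\mathcal{G}_3(\bar j)|\leq n-3$ and $|\mathcal{G}_3(j)|+|\mathcal{G}_2(\bar j)|\leq 2n-4$, and invoke $\mathcal{G}\nsubseteq\mathcal{U}^*(n,4)$ to extract a set $\{1,z,j\}\in\mathcal{G}_3$ in the hard case. Your first two cases ($|\mathcal{G}_3(\bar j)|\geq 1$, and $|\mathcal{G}_3(\bar j)|=0$ with $|\mathcal{G}_4|\leq 2$) are correct and coincide with the paper's arguments.

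There is, however, a genuine gap in your main case. The trade-off $|\mathcal{G}_3(j)|+|\mathcal{G}_2(\bar j)|\leq n$ is justified by the claim that each member of $\mathcal{G}_3(j)$ beyond $\{1,z\}$ and $\{1,2\}$ deletes at least one set from the pool of at most $n-2$ admissible members of $\mathcal{G}_2(\bar j)$. This only yields $|\mathcal{G}_2(\bar j)|\leq \max\bigl(0,\,n-|\mathcal{G}_3(j)|\bigr)$: once the pool is exhausted the cross-intersecting condition on $\mathcal{G}_2(\bar j)$ becomes vacuous and imposes no further restriction on $\mathcal{G}_3(j)$, which a priori can have up to $2n-5$ members (all $2$-sets of $[n]\setminus\{j\}$ meeting $[2]$, which is compatible with $\Delta(\mathcal{G})\leq 4$ and $S_j(\mathcal{G})\subseteq\mathcal{H}^*(n,4)$). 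So when $\mathcal{G}_2(\bar j)$ is empty or very small, your inequality $|\mathcal{G}_3(j)|+|\mathcal{G}_2(\bar j)|\leq n$ can fail and the final count is not bounded by $3n$. The paper closes exactly this hole by first disposing of $|\mathcal{G}_2(\bar j)|\leq 1$ via the normalization $|\mathcal{G}(j)|\leq|\mathcal{G}(\bar j)|$, which gives $|\mathcal{F}|\leq 2|\mathcal{G}(\bar j)|$ and an upper bound of roughly $2n$; only then, with $|\mathcal{G}_2(\bar j)|\geq 2$ and $|\mathcal{G}_3(j)|\geq 2$, does it apply Lemma \ref{HK32} (whose hypotheses require both families to have at least two members) to get $|\mathcal{G}_3(j)|+|\mathcal{G}_2(\bar j)|\leq n$, treating $|\mathcal{G}_3(j)|\leq 1$ by a separate direct argument. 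To repair your proof you must either add the subcase $|\mathcal{G}_2(\bar j)|\leq 1$ handled by the doubling bound, or otherwise invoke $|\mathcal{G}(j)|\leq|\mathcal{G}(\bar j)|$ to cap $|\mathcal{G}_3(j)|$; as written, the "removal" argument does not suffice.
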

\begin{proof}
Since $\sum_{0 \leq i \leq 2}\binom{n}{i}-\binom{n-3}{2}-\binom{n-4}{1}+2=3n+1$, we only need to show that $|\mathcal{F}|\leq 3n+1$.

\underline{First let us consider the case $\mathcal{G}_{3}(\bar{j})=\emptyset$.}
If $|\mathcal{G}_{2}(\bar{j})|\leq 1$, note that $\mathcal{G}_{4}(\bar{j})=\emptyset$, then the same argument as (\ref{c16}) works. Now suppose that $|\mathcal{G}_{2}(\bar{j})|\geq 2$.
Note that $|\mathcal{G}_2(j)|\leq n-1$.
If $|\mathcal{G}_{4}|=1$, then by (\ref{c18}), we have
\begin{align*}
|\mathcal{F}|=& \sum_{0 \leq i \leq 1}|\mathcal{G}_i|+|\mathcal{G}_{2}(j)|+|\mathcal{G}_2(\bar{j})|+|\mathcal{G}_{3}(j)|+|\mathcal{G}_{4}(j)|\\
\leq& 1+4+n-1+2n-4+1=3n+1.
\end{align*}
If $|\mathcal{G}_{4}|\geq 2$, then $\mathcal{G}_{1}\subseteq \{1,2,j\}$.
So $|\mathcal{G}_1|+|\mathcal{G}_{4}|\leq 3+n-3=n$. In this case, if $|\mathcal{G}_{3}(j)|\geq 2$, then Lemma \ref{HK32} implies that
\begin{align*}
|\mathcal{G}_2(\bar{j})|+|\mathcal{G}_{3}(j)|
\leq\binom{n-1}{2}-\binom{n-3}{2}-\binom{n-4}{1}+1.
\end{align*}
By replacing  $|\mathcal{G}_1|$ in (\ref{c15}) with $|\mathcal{G}_1|+|\mathcal{G}_{4}|$ here, we get the desired inequality.

 So we may assume that $|\mathcal{G}_{4}|\geq 2$ and $|\mathcal{G}_{3}(j)|\leq 1$. Now if $|\mathcal{G}_{3}(j)|= 0$, then $\mathcal{G}\subseteq\mathcal{U}^*(n, 4)$, a contradiction.
 If $\mathcal{G}_{3}(j)= \{1,2\}$, then $\mathcal{G}\subseteq\mathcal{U}^*(n, 4)$, a contradiction again. Thus $\mathcal{G}_{3}(j)= \{1,x\} \text{ or } \{2, x\}$ for some $x\in [n]\backslash \{1, 2, j\}$. By symmetry, we may assume that $\mathcal{G}_{3}(j)= \{1,x\}$.
Then we have
$
\mathcal{G}_{2}(\bar{j})\subseteq \{\{1,i\}, i\in [n]\backslash \{1,x, j\} \}\cup  \{\{2, x\}\}.
$
Hence, we get
$
|\mathcal{G}_2(\bar{j})|+|\mathcal{G}_{3}(j)|
\leq n-1.
$
Note that $|\mathcal{G}_{4}|\leq n-3$. It follows that
\begin{align*}
|\mathcal{F}|=&\sum_{0 \leq i \leq 1}|\mathcal{G}_i|+|\mathcal{G}_{2}(j)|+|\mathcal{G}_2(\bar{j})|+|\mathcal{G}_{3}(j)|+|\mathcal{G}_{4}(j)|\\
\leq&1+3+n-1+n-1+n-3=3n-1<3n+1.
\end{align*}

\underline{Next let us consider the case $\mathcal{G}_{3}(\bar{j})\neq\emptyset$.}
Recall that $\mathcal{G}_{3}(\bar{j})$ and $\mathcal{G}_{2}(j)$ are cross-intersecting. In addition, $1,2 \in G^{\prime}$ for any $G^{\prime} \in \mathcal{G}_{3}(\bar{j})$.
If $|\mathcal{G}_{3}(\bar{j})|=1$, then $|\mathcal{G}_{2}(j)|\leq \binom{n-1}{1}-\binom{n-4}{1}=3$. If $|\mathcal{G}_{3}(\bar{j})|\geq 2$, then $B\cap [2]\neq \emptyset$ for any $B\in \mathcal{G}_{2}(j)$ and hence $|\mathcal{G}_{2}(j)|\leq 2$. Note that $|\mathcal{G}_{1}|\leq 4$.
In view of (\ref{c17}) and (\ref{c18}), we conclude that
\begin{align*}
|\mathcal{F}|=&\sum_{0 \leq i \leq 1}|\mathcal{G}_i|+|\mathcal{G}_{2}(j)|+|\mathcal{G}_2(\bar{j})|+|\mathcal{G}_{3}(j)|+|\mathcal{G}_3(\bar{j})|+|\mathcal{G}_{4}(j)|\\
\leq&1+4+3+2n-4+n-3=3n+1.
\end{align*}
This completes the proof of Lemma \ref{c3.13}.
\end{proof}
Combining Subcase $8.1$ and Subcase $8.2$, we complete the proof in Case \ref{case8}.$\hfill \square$

\begin{casebox}
\begin{case}\label{case9}
Suppose that $\mathcal{F}$ is not a complex,  $s=4$ and there is a family $\mathcal{G}$ obtained from $\mathcal{F}$ by repeated down-shift operations satisfying $\mathcal{G}\nsubseteq\mathcal{K}(n, 4), \mathcal{H}(n, 4), \mathcal{R}(n, 4), \mathcal{H}^*(n, 4), \mathcal{U}^*(n, 4)$, in addition, $\mathcal{G}\nsubseteq\mathcal{R}^*(n, 4)$, but $S_j(\mathcal{G})\subseteq\mathcal{R}^*(n, 4)$.
\end{case}
\end{casebox}

\noindent{\bf Proof in Case \ref{case9}.}
Since $\sum_{0 \leq i \leq 2}\binom{n}{i}-\binom{n-3}{2}-\binom{n-4}{1}+2=3n+1$, it suffices to show that \underline{$|\mathcal{F}|\leq 3n+1$}.
Recall that
\begin{align*}
\mathcal{R}^*(n, 4)=&\left\{F\subseteq [n]: |F|\leq 1\right\} \cup\left\{\{1,2\}, \{1,y\}, \{2,y\}\right\}\cup\left\{F \in\binom{[n]}{3}: y\in F, F \cap[2] \neq \emptyset\right\}\\
& \cup\{\{1,2,i\}: i \in[3, n]\}.
\end{align*}
In addition, we have
$
|\mathcal{G}|=|\mathcal{F}|,~ \Delta(\mathcal{G}) \leq 4$
and $|\mathcal{G}(i)| \leq |\mathcal{G}(\bar{i})|$ for all $i \in[n]$.
Since $\mathcal{G}\nsubseteq\mathcal{K}(n, 4)$, we have  ${\rm max}\{|G|: G\in \mathcal{G}\}\geq 3$. 
We proceed the proof in two cases.

{\bf Subcase 9.1.} ${\rm max}\{|G|: G\in \mathcal{G}\}=3$.

Since $\mathcal{G}\nsubseteq\mathcal{K}(n, 4)$ and $\mathcal{G}\nsubseteq\mathcal{H}(n, 4)$, we have $|\mathcal{G}_{3}|\geq 2$. If $j=2$, then $S_2(\mathcal{G})\subseteq\mathcal{R}^*(n, 4)$ implies that $\mathcal{G}_{3}(\bar{2})\subseteq\{\{1,y, i\}, i\in [n]\backslash\{1,2,y\}\}$ and  $\mathcal{G}_{2}(\bar{2})\subseteq\{\{1,y\}\}$. In addition, we have $\mathcal{G}_{3}(2)\subseteq\{\{1,y\}\}$. So
\begin{align}\label{c19}
\begin{split}
|\mathcal{F}|=&\sum_{0 \leq i \leq 1}|\mathcal{G}_i|+|\mathcal{G}_{2}(2)|+|\mathcal{G}_2(\bar{2})|+|\mathcal{G}_{3}(2)|+|\mathcal{G}_3(\bar{2})|\\
\leq&1+n+n-1+2+n-3=3n-1<3n+1.
\end{split}
\end{align}
By symmetry, the result holds if $j=1$ or $j=y$.

Now suppose that $j\neq 1, 2, y$. Then $S_j(\mathcal{G})\subseteq\mathcal{R}^*(n, 4)$ implies that
\begin{align*}
&\mathcal{G}_{3}(\bar{j})\subseteq \left\{F \in\binom{[n]}{3}: y\in F, j\notin F, F \cap[2] \neq \emptyset\right\} \cup\{\{1,2,i\}: i \in[3, n]\backslash\{j\}\},\\
&\mathcal{G}_{2}(\bar{j})\subseteq \left\{\{1,2\}, \{1,y\}, \{2,y\}\right\}, \mathcal{G}_{3}(j)\subseteq \left\{\{1,2\}, \{1,y\}, \{2,y\}\right\}, \mathcal{G}_{2}(j)\cap \mathcal{G}_{1}(\bar{j})=\emptyset.
\end{align*}
Then $|\mathcal{G}_{2}(j)|+|\mathcal{G}_{1}|\leq n$.
Note that $\Delta(\mathcal{G}) \leq 4$ implies that $\mathcal{G}_{3}(\bar{j})$ and $\mathcal{G}_{2}(j)$ are cross-intersecting. If $\mathcal{G}_{2}(j)=\emptyset$, then $\mathcal{G}\subseteq\mathcal{R}^*(n, 4)$, a contraction. So $\mathcal{G}_{2}(j)\neq \emptyset$. Then  $|\mathcal{G}_{3}(\bar{j})|\leq 2n-7$.
It follows that
\begin{align*}
|\mathcal{F}|=&|\mathcal{G}_0|+|\mathcal{G}_1|+|\mathcal{G}_{2}(j)|+|\mathcal{G}_2(\bar{j})|+|\mathcal{G}_{3}(j)|+|\mathcal{G}_3(\bar{j})|\\
\leq&1+n+3+3+2n-7=3n<3n+1.
\end{align*}
This completes the proof of Subcase $9.1$.

{\bf Subcase 9.2.} ${\rm max}\{|G|: G\in \mathcal{G}\}\geq 4$.

Clearly, we must have ${\rm max}\{|G|: G\in \mathcal{G}\}=4$.  If $j=2$, then $S_2(\mathcal{G})\subseteq\mathcal{R}^*(n, 4)$ implies that $\mathcal{G}_{4}(\bar{2})=\emptyset$, $\mathcal{G}_{4}(2)\subseteq\{\{1,y, i\}, i\in [n]\backslash\{1,2,y\}\}$, $\mathcal{G}_{3}(\bar{2})\subseteq\{\{1,y, i\}, i\in [n]\backslash\{1,2,y\}\}$ and  $\mathcal{G}_{4}(2)\cap \mathcal{G}_{3}(\bar{2})=\emptyset$. Therefore, we have
\begin{align*}
|\mathcal{G}_4(2)|+|\mathcal{G}_{3}(\bar{2})|\leq n-3.
\end{align*}
Furthermore, we have $\mathcal{G}_{2}(\bar{2})\subseteq\{\{1,y\}\}$, $\mathcal{G}_{3}(2)\subseteq\{\{1,y\}\}$. By replacing  $|\mathcal{G}_{3}(\bar{2})|$ in (\ref{c19}) with $|\mathcal{G}_4(2)|+|\mathcal{G}_{3}(\bar{2})|$ here, we get $|\mathcal{F}|\leq 3n$. By symmetry, the result holds if $j=1$ or $j=y$.

Next suppose that $j\neq 1, 2, y$. Then $S_j(\mathcal{G})\subseteq\mathcal{R}^*(n, 4)$ implies that $\mathcal{G}_{4}(\bar{j})=\emptyset$ and
\begin{align}\label{c20}
\begin{split}
&\mathcal{G}_{4}(j), \mathcal{G}_{3}(\bar{j})\subseteq \left\{F \in\binom{[n]}{3}: y\in F, j\notin F, F \cap[2] \neq \emptyset\right\} \cup\{\{1,2,i\}: i \in[3, n]\backslash\{j\}\},\\
&\mathcal{G}_{4}(j)\cap \mathcal{G}_{3}(\bar{j})=\emptyset,~ \mathcal{G}_{2}(\bar{j})\subseteq \left\{\{1,2\}, \{1,y\}, \{2,y\}\right\},~\mathcal{G}_{3}(j)\subseteq \left\{\{1,2\}, \{1,y\}, \{2,y\}\right\},\\
&\mathcal{G}_{2}(j)\cap \mathcal{G}_{1}(\bar{j})=\emptyset.
\end{split}
\end{align}
It follows that
\begin{align}\label{c21}
|\mathcal{G}_4(j)|+|\mathcal{G}_{3}(\bar{j})|\leq 3n-11,~ |\mathcal{G}_{2}(j)|+|\mathcal{G}_{1}|\leq n.
\end{align}
Note that $\Delta(\mathcal{G}) \leq 4$ implies that $\mathcal{G}_{1}$ and $\mathcal{G}_{4}$ are cross-intersecting,
$\mathcal{G}_{4}(j)$ and $\mathcal{G}_{3}(\bar{j})$ are cross-$2$-intersecting, $\mathcal{G}_{3}(\bar{j})$ and $\mathcal{G}_{2}(j)$ are cross-intersecting. Since $\mathcal{G}_{4}\neq \emptyset$, we have $|\mathcal{G}_{1}|\leq 4$.

If $\mathcal{G}_{2}(j)=\emptyset$, then by (\ref{c20}), we get
\begin{align*}
|\mathcal{F}|=&|\mathcal{G}_0|+|\mathcal{G}_1|+|\mathcal{G}_{2}(j)|+|\mathcal{G}_2(\bar{j})|+|\mathcal{G}_{3}(j)|+|\mathcal{G}_3(\bar{j})|+|\mathcal{G}_4(j)|\\
\leq&1+4+3+3+3n-11=3n<3n+1.
\end{align*}

If $\mathcal{G}_{2}(j)\neq \emptyset$, note that $\mathcal{G}_{3}(\bar{j})$ and $\mathcal{G}_{2}(j)$ are cross-intersecting, then by (\ref{c20}) and by symmetry, we may assume that
\begin{align*}
\mathcal{G}_{3}(\bar{j})\subseteq \left\{F \in\binom{[n]}{3}: 1, y\in F, j\notin F \right\} \cup\{\{1,2,i\}: i \in[3, n]\backslash\{j\}\},
\end{align*}
or there exists $x\neq [n]\backslash\{1,2,y\}\}$ such that
$
\mathcal{G}_{3}(\bar{j})\subseteq \{\{1,y,x\}, \{2,y,x\}, \{1,2,x\}\}.
$
Note that $|\mathcal{G}_{1}(\bar{j})|, |\mathcal{G}_{2}(\bar{j})|\leq 3$. If $|\mathcal{G}_{3}(\bar{j})|\leq 2$, then
\begin{align*}
|\mathcal{F}|\leq& 2\sum_{0 \leq i \leq 3}|\mathcal{G}_i(\bar{j})|\leq 2(1+3+3+2)=18<3n+1.
\end{align*}
It remains to consider the case $|\mathcal{G}_{3}(\bar{j})|\geq 3$.
For the case
\begin{align*}
\mathcal{G}_{3}(\bar{j})\subseteq \left\{F \in\binom{[n]}{3}: 1, y\in F, j\notin F \right\} \cup\{\{1,2,i\}: i \in[3, n]\backslash\{j\}\},
\end{align*}
if 
$
\mathcal{G}_{4}(j)\subseteq \left\{F \in\binom{[n]}{3}: 1, y\in F, j\notin F \right\} \cup\{\{1,2,i\}: i \in[3, n]\backslash\{j\}\},
$
then  $\mathcal{G}_{4}(j)\cap \mathcal{G}_{3}(\bar{j})=\emptyset$ implies that $|\mathcal{G}_{4}(j)|+|\mathcal{G}_{3}(\bar{j})|\leq 2n-7$, otherwise by (\ref{c20})  there exists  $a \in[n]\backslash\{1, 2, y,j\}$  such that $\{2,y,a\} \in \mathcal{G}_{4}(j)$, then by the cross-$2$-intersecting property of $\mathcal{G}_{4}(j)$ and $\mathcal{G}_{3}(\bar{j})$, we get
$
\mathcal{G}_{3}(\bar{j})\subseteq \{\{1,2,a\}, \{1,2,y\}, \{1,y,a\}\}.
$
Since $|\mathcal{G}_{3}(\bar{j})|\geq 3$, we further get
\begin{align*}
\mathcal{G}_{3}(\bar{j})= \{\{1,2,a\}, \{1,2,y\}, \{1,y,a\}\}.
\end{align*}
Since $\mathcal{G}_{4}(j)$ and $\mathcal{G}_{3}(\bar{j})$ are cross-$2$-intersecting and $\mathcal{G}_{4}(j)\cap \mathcal{G}_{3}(\bar{j})=\emptyset$, we obtain $\mathcal{G}_{4}(j)=\{\{2,y,a\}\}$. So $|\mathcal{G}_{4}(j)|+|\mathcal{G}_{3}(\bar{j})|=4< 2n-7$.
For the case that there exists $x\neq [n]\backslash\{1,2,y\}\}$ such that
\begin{align*}
\mathcal{G}_{3}(\bar{j})\subseteq \{\{1,y,x\}, \{2,y,x\}, \{1,2,x\}\}.
\end{align*}
Since $|\mathcal{G}_{3}(\bar{j})|\geq 3$, we exactly have
\begin{align*}
\mathcal{G}_{3}(\bar{j})= \{\{1,y,x\}, \{2,y,x\}, \{1,2,x\}\}.
\end{align*}
Then the cross-$2$-intersecting property of $\mathcal{G}_{4}(j)$ and $\mathcal{G}_{3}(\bar{j})$ and $\mathcal{G}_{4}(j)\cap \mathcal{G}_{3}(\bar{j})=\emptyset$ imply that
$
\mathcal{G}_{4}(j)\subseteq \{\{1,2,y\}\}.
$
So $|\mathcal{G}_{4}(j)|+|\mathcal{G}_{3}(\bar{j})|\leq 4< 2n-7$.

Therefore,  we conclude  from these and (\ref{c20}) and (\ref{c21}) that
\begin{align*}
|\mathcal{F}|=&|\mathcal{G}_0|+|\mathcal{G}_1|+|\mathcal{G}_{2}(j)|+|\mathcal{G}_2(\bar{j})|+|\mathcal{G}_{3}(j)|+|\mathcal{G}_3(\bar{j})|+|\mathcal{G}_{4}(j)|\\
\leq&1+n+3+3+2n-7=3n<3n+1.
\end{align*}
This completes the proof in Subcase $9.2$.
Combining Subcase $9.1$ and Subcase $9.2$, we complete the proof in Case \ref{case9}.$\hfill \square$

\begin{casebox}
\begin{case}\label{case10}
Suppose that $\mathcal{F}$ is not a complex,  $s=4$ and there is a family $\mathcal{G}$ obtained from $\mathcal{F}$ by repeated down-shift operations satisfying $\mathcal{G}\nsubseteq\mathcal{K}(n, 4),\mathcal{H}(n, 4), \mathcal{R}(n, 4), \mathcal{H}^*(n, 4), \mathcal{R}^*(n, 4)$, in addition, $\mathcal{G}\nsubseteq\mathcal{U}^*(n, 4)$, but $S_j(\mathcal{G})\subseteq\mathcal{U}^*(n, 4)$.
\end{case}
\end{casebox}

\noindent{\bf Proof in Case \ref{case10}.}
It is sufficient to prove $|\mathcal{F}|\leq 3n+1$.
Recall that
\begin{align*}
\mathcal{U}^*(n, 4)=&\emptyset\cup \{\{1\}, \{2\}, \{y\}\} \cup\left\{\{y,i\},i \in [n]\backslash \{y\}\right\}\cup\left\{F \in\binom{[n]}{2}: y\notin F, F \cap[2] \neq \emptyset\right\} \\
&\cup\{\{1,2,y\}\}\cup\{\{1,2,y, i\}: i \in [n]\backslash \{1,2,y\}\}.
\end{align*}
Furthermore, we have $|\mathcal{G}|=|\mathcal{F}|,~ \Delta(\mathcal{G})  \leq 4$
and $|\mathcal{G}(i)| \leq |\mathcal{G}(\bar{i})|$ for all $i \in[n]$.
 By Case \ref{case2} and  Case \ref{case4}, we may assume that $S_j(\mathcal{G})\nsubseteq\mathcal{K}(n, 4)$ and $S_j(\mathcal{G})\nsubseteq\mathcal{H}(n, 4)$. Then $S_j(\mathcal{G})\subseteq\mathcal{U}^*(n, 4)$ implies that ${\rm max}\{|G|: G\in \mathcal{G}\}\geq 4$.

{\bf Subcase 10.1.} ${\rm max}\{|G|: G\in \mathcal{G}\}=4$.

If $j\in \{1,2,y\}$, by symmetry, we may assume that $j=1$.
Then $S_1(\mathcal{G})\subseteq\mathcal{U}^*(n, 4)$ implies that $\mathcal{G}_{4}(\bar{1})=\emptyset$, $\mathcal{G}_{3}(\bar{1})=\emptyset$. So $\mathcal{G}_{4}(1)\cap \mathcal{G}_{3}(\bar{1})=\emptyset$. Since $\mathcal{G}_{4}\neq\emptyset$, we have $S_1(\mathcal{G}_{4})=\{\{1, 2, y\}\}$, a contradiction.
Thus $j\notin \{1,2,y\}$.
Then $S_j(\mathcal{G})\subseteq\mathcal{U}^*(n, 4)$ implies that
\begin{align}\label{c22}
\begin{split}
&\mathcal{G}_{4}(\bar{j})\subseteq \{\{1,2,y, i\}: i \in[n]\backslash\{1, 2, y, j\}\},~\mathcal{G}_{4}(j), \mathcal{G}_{3}(\bar{j})\subseteq \{\{1,2,y\}\},\\
& \mathcal{G}_{3}(j)\cup \mathcal{G}_{2}(\bar{j})\subseteq \left\{\{y,i\},i \in [n]\backslash \{y,j\}\right\}\cup\left\{F \in\binom{[n]}{2}: y, j\notin F, F \cap[2] \neq \emptyset\right\},\\
&\mathcal{G}_{3}(j)\cap \mathcal{G}_{2}(\bar{j})=\emptyset,~\mathcal{G}_{2}(j), \mathcal{G}_{1}(\bar{j})\subseteq \left\{\{1\}, \{2\}, \{y\}\right\},
\end{split}
\end{align}
and or $\emptyset \in  \mathcal{G}$ and $j\notin \mathcal{G}$, or $\emptyset \notin  \mathcal{G}$ and $j\in \mathcal{G}$. In addition, we have $\mathcal{G}_{3}(j)\neq \emptyset$ since $\mathcal{G}\nsubseteq\mathcal{U}^*(n, 4)$.

 If $|\mathcal{G}_{2}(\bar{j})|\leq 2$, then applying (\ref{c22}) yields
\begin{align}\label{c24}
|\mathcal{F}|\leq& 2\sum_{0 \leq i \leq 4}|\mathcal{G}_i(\bar{j})|\leq 2(1+3+2+1+n-4)=2n+6<3n+1.
\end{align}

Now assume that  $|\mathcal{G}_{2}(\bar{j})|\geq 3$.
By $\Delta(\mathcal{G}) \leq 4$, we know that $\mathcal{G}_{3}(j)$ and $\mathcal{G}_{2}(\bar{j})$ are cross-intersecting. Since  $\mathcal{G}_{3}(j)\neq \emptyset$ and $\mathcal{G}_{3}(j)\cap \mathcal{G}_{2}(\bar{j})=\emptyset$, applying Lemma \ref{H672} to $\mathcal{G}_{3}(j)$ and $\mathcal{G}_{2}(\bar{j})$ yields
$|\mathcal{G}_{3}(j)|+| \mathcal{G}_{2}(\bar{j})|\leq \binom{n-1}{2}-\binom{n-3}{2}=2n-5.$
Therefore,  we have
\begin{align}\label{c25}
\begin{split}
|\mathcal{F}|=&|\mathcal{G}_0|+|\mathcal{G}_1(j)|+|\mathcal{G}_1(\bar{j})|+|\mathcal{G}_{2}(j)|+|\mathcal{G}_2(\bar{j})|+|\mathcal{G}_{3}(j)|+|\mathcal{G}_3(\bar{j})|+|\mathcal{G}_{4}(j)|+|\mathcal{G}_4(\bar{j})|\\
\leq&1+3+3+2n-5+1+1+n-4=3n<3n+1.
\end{split}
\end{align}
This completes the proof in Subcase $10.1$.

{\bf Subcase 10.2.} ${\rm max}\{|G|: G\in \mathcal{G}\}\geq 5$.

First of all, $S_j(\mathcal{G})\subseteq\mathcal{U}^*(n, 4)$ implies that ${\rm max}\{|G|: G\in \mathcal{G}\}= 5$ and  $j\notin \{1,2,y\}$. Moreover, we have $\mathcal{G}_{5}(\bar{j})=\emptyset$ and
\begin{align}\label{c23}
\begin{split}
&\mathcal{G}_{5}(j)\cup \mathcal{G}_{4}(\bar{j})\subseteq \{\{1,2,y, i\}: i \in[n]\backslash\{1, 2, y, j\}\},~ \mathcal{G}_{5}(j)\cap \mathcal{G}_{4}(\bar{j})=\emptyset,\\
&\mathcal{G}_{4}(j), \mathcal{G}_{3}(\bar{j})\subseteq \{\{1,2,y\}\},~\mathcal{G}_{3}(j)\cap \mathcal{G}_{2}(\bar{j})=\emptyset,~\mathcal{G}_{2}(j), \mathcal{G}_{1}(\bar{j})\subseteq \left\{\{1\}, \{2\}, \{y\}\right\},\\
& \mathcal{G}_{3}(j)\cup \mathcal{G}_{2}(\bar{j})\subseteq \left\{\{y,i\},i \in [n]\backslash \{y,j\}\right\}\cup\left\{F \in\binom{[n]}{2}: y, j\notin F, F \cap[2] \neq \emptyset\right\},
\end{split}
\end{align}
and or $\emptyset \in  \mathcal{G}$ and $j\notin \mathcal{G}$, or $\emptyset \notin  \mathcal{G}$ and $j\in \mathcal{G}$. Moreover, $\Delta(\mathcal{G}) \leq 4$ implies that $\mathcal{G}_{3}(j)$ and $\mathcal{G}_{2}(\bar{j})$ are cross-intersecting,  $\mathcal{G}_{5}(j)$ and $\mathcal{G}_{2}(\bar{j})$ are cross-$2$-intersecting. Since $\mathcal{G}_{5}(j)\neq \emptyset$, there exists $i_0\in[n]\backslash\{1, 2, y, j\}$ such that $\{1,2,y, i_0\}\in \mathcal{G}_{5}(j)$. Therefore, by the  cross-$2$-intersecting property of $\mathcal{G}_{5}(j)$ and $\mathcal{G}_{2}(\bar{j})$, we have
$
\mathcal{G}_{2}(\bar{j})\subseteq \left\{\{1,2\}, \{1,i_0\}, \{2,i_0\},\{y,1\}, \{y,2\}, \{y,i_0\}\right\}.
$
Then $|\mathcal{G}_{2}(\bar{j})|\leq 6$.

If $\mathcal{G}_{3}(j)= \emptyset$, then applying $|\mathcal{G}_{2}(\bar{j})|\leq 6$ and (\ref{c23}), we get
\begin{align*}
|\mathcal{F}|=&|\mathcal{G}_0|+|\mathcal{G}_1(j)|+|\mathcal{G}_1(\bar{j})|+|\mathcal{G}_{2}(j)|+|\mathcal{G}_2(\bar{j})|+|\mathcal{G}_3(\bar{j})|+|\mathcal{G}_{4}(j)|+|\mathcal{G}_4(\bar{j})|+|\mathcal{G}_{5}(j)|\\
\leq&1+3+3+6+1+1+n-4=n+11<3n+1.
\end{align*}

If $\mathcal{G}_{3}(j)\neq \emptyset$, note that $\mathcal{G}_{5}(\bar{j})=\emptyset$ and
$
|\mathcal{G}_4(\bar{j})|+|\mathcal{G}_{5}(j)|\leq n-4$, exactly the same arguments as (\ref{c24}) and (\ref{c25}) work.
This completes the proof in Subcase $10.2$.

Combining Subcase $10.1$ and Subcase $10.2$, we complete the proof in Case \ref{case10}.$\hfill \square$\vspace{3mm}

 In conclusion, all steps in the proof for Theorem \ref{ma1} are finished.

\section{Concluding remarks}

Extending the stability of Erd\H{o}s-Ko-Rado's theorem,  Han and Kohayakawa \cite{H17}, Kostochka and Mubayi \cite{KM17}, Kupavskii \cite{K19}, Huang and Peng \cite{H24} and Frankl and Wang \cite{FW2024-EUJC} recently investigated a series of stability results for the $k$-uniform intersecting families by exploring  different structural parameters. 
In particular, solving a question of Han and Kohayakawa \cite{H17}, 
Huang and Peng \cite{H24} completely established the third full stability for Erd\H{o}s-Ko-Rado's theorem. Recently, 
Ge, Xu and Zhao \cite{GXZ2024} developed a linear algebraic method to show the stability result at the $t$-th level for any integer $4\le t\le k-2$ under a slightly stronger condition $n> 3.618k $.

Along with the motivation of Han and Kahayakawa \cite{H17}, the second author and Wu \cite{L24} presented the second level stability of Katona's  theorem for non-uniform families with restricted union.  In this paper, we answered a problem proposed in  \cite{L24}, and we established the second level stability  for Kleitman's diameter theorem; see Figure \ref{fig-relation}.   
In light of the aforementioned works on intersecting families, 
it is also an interesting and challenging problem to show the $t$-th level stability of Kleitman's diameter theorem for any integer $t$. 
Different from the classical problem on intersecting families, the problem on families with restricted diameter is relatively more difficult and complicated since the families we considered are non-uniform.  
We believe that some efficient ideas and methods should be developed to avoid the complicated case analysis for 
establishing the $t$-th stability of Kleitman's diameter theorem.

\section*{Acknowledgement}
Yongtao Li was supported by the Postdoctoral  Fellowship Program of CPSF (No. GZC20233196). 
Lihua Feng was supported by the NSFC (Nos. 12271527 and 12471022), Guihai Yu was supported by the NSFC (Nos. 12461062 and 11861019), and the Natural Science Foundation of Guizhou (Nos. [2020]1Z001 and [2021]5609). 


\end{document}